\def\version{14.06.2018, 14.42h }\def\users{final-layout}  %
\def\users{us}  % when this line is deactivated, then our note are supressed
\i\v{c}ek, U.Stefanelli}
\definecolor{labelkey}{rgb}{1.,.2,0.}
\definecolor{brown}{rgb}{0.5,0,0}
	\newcommand{\COMMENT}[1]{}
	\newcommand{\DELETE}[1]{}
        \newcommand{\REM}[1]{\marginpar{\bfseries\tiny{\color{blue}}}}
\newcommand\UUU{\color{black}}
\newcommand\EEE{\color{black}}
\newcommand{\eq}[1]{\eqref{#1}}
\newcommand{\QUE}[1]{\COMMENT{}}
\definecolor{DDGreen}{rgb}{0.,0.55,0.}
\newcommand\GGG{\color{black}}
 \newcommand{\COMMENT}[1]{{\color{red}\uuline{#1}\color{black}}}
 \newcommand{\DELETE}[1]{{\color{brown}\sout{#1}\color{black}}}
 \newcommand{\REM}[1]{\marginpar{\bfseries\tiny{\color{blue}#1}}}
\newcommand\UUU{\color{black}}
\newcommand\EEE{\color{black}}
\newcommand{\eq}[1]{\eqref{#1}}
\newcommand{\QUE}[1]{\COMMENT{#1}}
\definecolor{DDGreen}{rgb}{0.,0.55,0.}
\newcommand\GGG{\color{black}}
\begin{document}

%\newcommand{\REFSUG}[1]{\textcolor{green}{#1}}
%\newcommand{\REFSUG}[1]{\textcolor{black}{#1}}
%\newcommand{\RDELETE}[1]{{\color{brown}\sout{#1}\color{black}}}
%\newcommand{\RREPLACE}[2]{{\color{brown}\sout{#1}\uline{#2}\color{black}}}

%\newcommand{\VEFSUG}[1]{\textcolor{red}{#1}}
%\newcommand{\VDELETE}[1]{{\color{gray}\sout{#1}\color{black}}}
%\newcommand{\VREPLACE}[2]{{\color{gray}\sout{#1}\uline{#2}\color{black}}}

% -----------------------------------------

%\theoremstyle{plain}
%\newtheorem{theorem}{Theorem}[section]
%\newtheorem{corollary}{Corollary}[section]
%\newtheorem*{main}{Main Theorem}
%\newtheorem{lemma}{Lemma}[section]
%\newtheorem{proposition}{Proposition}[section]
%\newtheorem{conjecture}{Conjecture}[section]
%\newtheorem*{problem}{Problem}[section]
%\theoremstyle{definition}[section]
%\newtheorem{definition}{Definition}[section]
%\newtheorem{remark}{Remark}[section]
%\newtheorem*{notation}{Notation}[section]
%\newtheorem{example}{Example}[section]

% -----------------------------------------

%\usepackage{showkeys}
%\usepackage[left=2.8cm,right=2.8cm,top=2.8cm,bottom=2.8cm]{geometry}
%\usepackage[left=2.5cm,right=2.5cm,top=1.8cm,bottom=1.8cm]{geometry}
%\parskip1mm
%\usepackage{mathtools}
%\usepackage{graphicx}

%\usepackage{epstopdf}
%\DeclareGraphicsRule{.tif}{png}{.png}{`convert #1 `dirname #1`/`basename #1 .tif`.png}

%\usepackage[colorlinks=true, pdfstartview=FitV, linkcolor=blue, citecolor=blue, urlcolor=blue]{hyperref}

%\DeclarePairedDelimiter{\ceil}{\lceil}{\rceil}
%\DeclarePairedDelimiter{\floor}{\lfloor}{\rfloor}
%\tolerance=10000
\allowdisplaybreaks
%\numberwithin{equation}{section}
\theoremstyle{plain}
\newtheorem{theorem}{Theorem}[section]
\newtheorem{proposition}[theorem]{Proposition}
\newtheorem{lemma}[theorem]{Lemma}
\newtheorem{example}[theorem]{Example}
\newtheorem{corollary}[theorem]{Corollary}
\theoremstyle{definition}
\newtheorem{definition}[theorem]{Definition}
\newtheorem{remark}[theorem]{Remark}

\newcommand\R{\mathbb R}
\newcommand\M{\mathbb M}
\newcommand\N{\mathbb N}
\newcommand\Oo{\mathcal{O}}
\newcommand\Z{\mathbb{Z}}

\newcommand\iO{\int_\Omega}
\newcommand{\iOQ}{\int_{\Omega}\int_Q}
\newcommand{\iQQ}{\int_Q\int_Q}
\newcommand\iQ{\int_Q}
\newcommand\e{\varepsilon}

\newcommand\mn{\mathbb{M}^{N\times N}}
\newcommand\PP{\mathbb{P}}
\newcommand\QQ{\mathbb{Q}}
\newcommand\rd{\R^d}
\newcommand\Rm{\R^m}
\newcommand\rl{\R^l} 
\newcommand\rln{\R^{l\times N}}
\newcommand\en{\e_n}
\newcommand\ek{\e_k}
\newcommand\limn{\lim_{n\to +\infty}}
\newcommand\liminfn{\liminf_{n\to +\infty}}
\newcommand\ep{\varepsilon}
\newcommand{\scal}[2]{\langle #1,\,#2 \rangle}

\newcommand{\dist}{\mathrm{dist}}
\newcommand\wk{\to}
\newcommand\wks{\to}
\newcommand\wkts{\overset{2-s}{\rightharpoonup}}
\newcommand\sts{\overset{2-s}{\to}}

\newcommand{\C}{\mathbb{C}}
\newcommand{\ck}{\mathcal{C}^{\pdeor_k}}
\newcommand{\ckn}{\mathcal{C}^{\pdeor_{k_n}}}
\newcommand{\cc}{\mathcal{C}}
\newcommand{\F}{\mathscr{F}_{\pdeor}}
\newcommand{\fk}{\mathscr{F}_{\pdeor_k}}
\newcommand\Rn{\mathbb{R}^N}
\newcommand{\FF}{\mathcal{F}_{\pdeor}}
\newcommand{\FB}{\overline{\mathcal{F}}_{\pdeor}}
\newcommand{\FFN}{\mathcal{F}_{\pdeor(n\cdot)}}
\newcommand{\FBN}{\overline{\mathcal{F}}_{\pdeor(n\cdot)}}
\newcommand\rn[1]{R_{#1}^k}
\newcommand{\rnn}{R^k}
\newcommand\sn{S^k}
\newcommand\pdeep[1]{{\mathscr{A}}^{\rm div}_{#1}}
\newcommand\pdeepk[1]{{\mathscr{A}}_{k, #1}^{\rm div}}
\newcommand\pdeepn[1]{{\mathscr{A}_n}^{\rm div}_{#1}}
\newcommand\pde{\mathfrak{A}}
\newcommand\pdeor{\mathscr{A}}
\newcommand{\px}{\mathbb{T}_x}
\newcommand{\py}{\mathbb{T}_y}
\newcommand{\A}{\mathbb{A}}
\newcommand\aaa{{\mathscr{A}}}
\newcommand{\cx}[0]{\mathcal{C}_{x}}
\newcommand{\qa}[0]{Q_{\pdeor}}
\newcommand{\iq}[0]{\int_Q}
\newcommand{\qan}[0]{\qa^n}
\newcommand{\zn}{{Z}_{\e}}
\newcommand{\qn}{Q_{\e,z}}
\newcommand{\ze}{{Z}_{\e}}
\newcommand{\qe}{Q_{\e,z}}
\newcommand{\note}[1]{{\color{red}NOTE: #1}}
\newcommand{\HH}{\mathcal{H}}
\newcommand{\intt}{\int_0^{T}}
\newcommand{\intom}{\int_{\Omega}}
\newcommand{\mthree}{\R^{d\times d}}
\newcommand{\mb}{\mathcal{M}_b(\Omega\cup\Gamma_{\text{\sc d}};\mthree_{\rm dev})}
\newcommand{\V}{\mathcal{V}}

\newcommand\be[1]{\begin{equation}\label{#1}}
\newcommand\ee{\end{equation}}
\newcommand\ba[1]{\begin{align}\label{#1}}
\newcommand\ea{\end{align}}
\newcommand\bas{\begin{align*}}
\newcommand\eas{\end{align*}}
\newcommand\nn{\nonumber}
\newcommand\eep{\exp\Big(-\frac{t}{\ep}\Big)}
\newcommand\U{\mathscr U}
\newcommand\rti{\eta_{\tau,i}}
\newcommand\rtit{\eta_{\tau,i+2}}
\newcommand\rtio{\eta_{\tau,i+1}}
\newcommand\K{\mathbb{K}}
\newcommand\oo[1]{\rm{O}(#1)}
% TIME DERIVATIVES:
\newcommand\DT[1]{\mathchoice
                 {{\buildrel{\hspace*{.1em}\text{\LARGE.}}\over{#1}}}
                 {{\buildrel{\hspace*{.1em}\text{\Large.}}\over{#1}}}
                 {{\buildrel{\hspace*{.1em}\text{\large.}}\over{#1}}}
                 {{\buildrel{\hspace*{.1em}\text{\large.}}\over{#1}}}}
\newcommand\DDT[1]{\mathchoice
   {{\buildrel{\hspace*{.13em}\text{\LARGE.\hspace*{-.1em}.}}\over{#1}}}
   {{\buildrel{\hspace*{.1em}\text{\Large.\hspace*{-.1em}.}}\over{#1}}}
   {{\buildrel{\hspace*{.1em}\text{\large.\hspace*{-.1em}.}}\over{#1}}}
   {{\buildrel{\hspace*{.1em}\text{\large.\hspace*{-.1em}.}}\over{#1}}}}
\newcommand\DDDT[1]{\mathchoice
   {{\buildrel{\hspace*{.13em}\text{\LARGE.\hspace*{-.13em}.\hspace*{-.13em}.}}\over{#1}}}
   {{\buildrel{\hspace*{.1em}\text{\Large.\hspace*{-.1em}.\hspace*{-.1em}.}}\over{#1}}}
   {{\buildrel{\hspace*{.1em}\text{\large.\hspace*{-.1em}.\hspace*{-.1em}.}}\over{#1}}}
   {{\buildrel{\hspace*{.1em}\text{\large.\hspace*{-.1em}.\hspace*{-.1em}.}}\over{#1}}}}
\renewcommand\d{\mathrm{d}}
\newcommand\dx{\mathrm{d}x}
\newcommand\dt{\mathrm{d}t}
\newcommand\ds{\mathrm{d}s}
\newcommand{\lineunder}[2]{\LU{\begin{array}[t]{c}\underbrace{#1}\vspace*{.5em}\end{array}}{\mbox{\footnotesize\rm #2}}}
\newcommand{\linesunder}[3]{\LSU{\begin{array}[t]{c}\underbrace{#1}\vspace*{.5em}\end{array}}{\mbox{\footnotesize\rm #2}}{\mbox{\footnotesize\rm#3}}}
\newcommand{\LU}[2]{\begin{array}[t]{c}#1\vspace*{-1em}\\_{#2}\end{array}}
\newcommand{\LSU}[3]{\begin{array}[t]{c}#1\vspace*{-1em}\\_{#2}\vspace*{-.5em}\\_{#3}\end{array}}
%%%%%%% New for this draft %%%%%%%%%%%%%%
\newcommand\tr{{\rm tr}\,}
\newcommand\MM{\mathbb{M}}
\newcommand{\D}{\mathbb{D}}
%
%  some newcomands/def from the book with Martin
%
\newcommand\SYLD{\sigma_{_{\rm YLD}}}
\def\bbC{\mathbb{C}}
\def\bbD{\mathbb{D}}
\def\Colon{\!\colon\!}
\def\Cdot{\!\cdot\!}
\def\In{\!\in\!}
\def\SEFS{s_{_{\rm DAM}}}
\def\Rdev{\R_{\rm dev}^{d\times d}}
\def\Meas{{\rm Meas}}
\def\taur{\chi}
\def\pl{\partial}

\bigskip\bigskip\bigskip

%\title[Dynamic plasticity \& damage]
\centerline{\LARGE\bf Dynamic perfect plasticity and damage in viscoelastic
solids}

\bigskip\bigskip

%\author[E. Davoli]
\centerline{{\sc Elisa Davoli}\footnote{Faculty of Mathematics, University of Vienna, 
Oskar-Morgenstern-Platz 1, A-1090 Vienna, Austria},\ \ 
%\footnote{Corresponding author E-mail:
%{\footnotesize\texttt  {elisa.davoli@univie.ac.at}}}}
%\email{elisa.davoli@univie.ac.at}
%\author[T. Roub\'\i\v cek]
{\sc Tom\'a\v s Roub\'\i\v cek}
%\footnote{Corresponding author E-mail:
%{\footnotesize\texttt  {roubicek@karlin.mff.cuni.cz}}}}
\footnote{Mathematical Institute, Charles University,
Sokolovsk\'a 83, CZ-186~75~Praha~8,  Czech Republic}
\footnote{Institute of Thermomechanics, Czech Academy of Sciences,
Dolej\v skova 5, CZ-182~00~Praha~8, Czech Republic},\ \ 
%\email{tomas.roubicek@mff.cuni.cz}
%\author[U. Stefanelli]
{\sc Ulisse Stefanelli}$^{1,}$\footnote{Istituto di Matematica Applicata e Tecnologie
  Informatiche {\it E. Magenes}, Consiglio Nazionale delle Ricerche,
  via Ferrata 1, I-27100 Pavia, Italy} 
%\address{Faculty of Mathematics, University of Vienna \& Istituto di
%  Matematica Applicata e Tecnologie Informatiche {\it E. Magenes}, via
%  Ferrata 1, I-27100 Pavia, Italy,
%Oskar-Morgenstern-Platz 1, A-1090 Vienna, Austria}
%\email{ulisse.stefanelli@univie.ac.at}
}

%\subjclass[msc2010]{ }

\bigskip

\begin{abstract}
{\bf Abstract}.  In this paper we analyze an isothermal and isotropic model 
for  viscoelastic media combining linearized perfect plasticity (allowing for 
concentration of plastic strain and development of shear bands) and damage 
effects in a dynamic setting. The interplay between the  viscoelastic rheology
with inertia, elasto-plasticity, and unidirectional rate-dependent 
incomplete damage affecting both the elastic and viscous response, as well as 
the plastic yield stress, is rigorously characterized by showing
existence of weak solutions to the constitutive  and balance 
equations of the model. The analysis relies on the notions of plastic-strain 
measures and bounded-deformation displacements, on sophisticated 
time-regularity estimates to establish a duality between acceleration and 
velocity of the elastic displacement, on the theory of rate-independent 
processes for the energy conservation in the dynamical-plastic part, and on 
the proof of the strong convergence of the elastic strains. Existence of a 
suitably defined weak solutions is proved rather constructively by using a 
staggered two-step time discretization scheme.

\medskip

\noindent{\bf Keywords:} Perfect plasticity, inertia, cohesive damage, 
%Cahn-Hilliard equation, elliptic regularization, 
%time discretization, 
Kelvin-Voigt viscoelastic rheology,
functions of bounded deformation,  staggered time discretisation, 
weak solution.
\medskip

\noindent{\bf AMS Subj. Classificaiton:}
35Q74, %PDEs in connection with mechanics of deformable solids
37N15, % Dynamical systems in solid mechanics
% 70H03 %Hamiltonian and Lagrangian mechanics -- Lagrange’s equations, 
%70H30, % Hamiltonian and Lagrangian mechanics -- Other variational principles
%74F10, %Fluid-solid interactions(including aero- and hydro-elasticity, porosity,
74C05, %Plastic materials, ... Small-strain, rate-independent
74R05. % Brittle damage
%74C10, %Plastic materials, ...Small-strain, rate-dependent (...viscoplasticity)
%74G65, % Equilibrium (steady-state) problems -- Energy minimization
%76S05, %Flows in porous media; filtration; seepage
%86A17. Geophysics: Global dynamics, earthquake problems
\end{abstract}

%\maketitle

\section{Introduction}
\GGG 
Plasticity and damage are inelastic phenomena providing the
macroscopical evidence of defect formation \UUU and evolution at the
\GGG atomistic level. Plasticity results from the accumulation of slip defects (dislocations), which determine the behavior of a body to change from elastic and reversible to plastic and irreversible, once the magnitude of the stress reaches a certain threshold and a plastic flow develops.
%As a result a plastic flow rule develops, and the material remains permanently plastically deformed after unloading. 
Damage evolution originates from the formation of cracks and voids in the microstructure of the material. 

The mathematical modeling of inelastic phenomena is a very active
research area, at the triple point between mathematics, physics, and
materials science. \UUU A \GGG vast literature concerning damage in \UUU
viscoelastic \GGG materials, both in the quasistatic and the dynamical
setting \UUU is currently available. \GGG We refer, e.g., to \cite{MielkeThomas,MieRou15RIST,Roub??MDDP,LazzaroniRossiThomasToader,ZhangGross} and the references therein for an overview of the main results. 

The interplay between plasticity and damage has been \UUU already \GGG
extensively investigated, \UUU prominently in the quasistatic
framework. \GGG The interaction between damage and strain gradient
plasticity \UUU is addressed \GGG  in \cite{Crismale2017} \UUU whereas
a perfect-plastic model \GGG  has been proposed in
\cite{AlessiMarigoVidoli}, where the one-dimensional response is also
studied. Existence results in general dimensions have been obtained in
\cite{Crismale2015, CrismaleLazzaroni}, see also
\cite{CrismaleOrlando} for some recent associated lower semicontinuity
results. The coupling between damage and rate-independent small-strain
plasticity with hardening is the subject of
\cite{BonettiRoccaRossiThomas, ThomasRossi,
  RoubValdman2016}. Quasistatic perfect plasticity and damage with
healing are analyzed in \cite{RoubValdman2017}. The identification of
fracture models as limits of damage coupled with plasticity has \UUU
also been considered \GGG \cite{DalmasoIurlano, DalmasoOrlandoToader}.

 %\cite{GrNeTr79DDPE,Maug92TPF}.\\

The analysis of dynamic perfect plasticity without damage has been initiated in
\cite{anzellotti.luckhaus}. A derivation of the equations via
vanishing hardening, and vanishing \UUU viscoplasticity \GGG has been
performed in \cite{Chel01PPZR,chelminski}. A generalization via \UUU
the so-called \GGG  cap-model approximation has been obtained in \cite{babadjian.mora}. An approximation of the equations of dynamic plasticity relying on the minimization of a parameter-dependent functional defined on trajectories is the subject of \cite{DavSte??DPPC}, whereas an alternative approach based on hyperbolic conservation laws has been proposed in \cite{BabadjianMifsud}. Dimension reduction for dynamic perfectly plastic plates has been carried on in \cite{MagMor16DEMP}. Convergence of dynamic models to quasistatic ones has been analyzed in \cite{dalmaso.scala,Ross??VPPT}.\\

To our best knowledge, the combination of perfect plasticity, damage,
and inertia has been so far tackled in the engineering and geophysical
literature (see, e.g., \cite{XuEtAl,DuanDay,GatuingtPiajudier}),
whilst a mathematical counterpart to the applicative analysis is still
missing. The focus of this paper is to provide a rigorous analysis of
an \EEE isothermal and isotropic \GGG model  for \UUU viscoelastic
\GGG media combining both
small-strain perfect plasticity and damage effects in a dynamic
setting.

\UUU More specifically, \GGG our main result (Theorem \ref{thm:main})
\UUU shows \GGG existence of \UUU suitably weak \GGG solutions to the following system of equations and differential inclusions, complemented by suitable boundary conditions and initial data
\begin{subequations}\begin{align}\label{system-u-int}
&\rho \DDT u-{\rm div}\,\sigma
%\big(\C(\alpha) e_{\rm el}+\D(\alpha)\DT{e}_{\rm el}\big)
=f,\ \ \ 
\sigma:=\C(\alpha)e_{\rm el}+\D(\alpha)\DT e_{\rm el},\ \ e_{\rm el}=e(u)-\pi,\\
&\label{system-p-int}
%\partial R(\alpha,\DT{\pi})
\SYLD(\alpha){\rm Dir}(\DT{\pi})\ni {\rm dev}\,\sigma,
\\
&\label{system-alpha-int}
\partial \zeta(\DT\alpha)
%\mathbb{I}_{(-\infty,0]}(\DT \alpha)+\eta \DT\alpha%p_R
+\frac12 \C'(\alpha)e_{\rm el}:e_{\rm el}\ni \phi'(\alpha) +
{\rm div}\, (\kappa |\nabla  \alpha|^{p-2}\nabla  \alpha) ,
\end{align}
\end{subequations}
where $u,\, \pi$, and $\alpha$ denote the displacement, the plastic
strain, and the damage variable, respectively, $\C(\cdot)$,
$\D(\cdot)$, and $\SYLD(\cdot)$ are the damage-dependent elasticity
tensor, viscosity tensor, and yield surface, \UUU and \GGG
$e(u)=(\nabla u+\nabla u^{\top})$ \UUU is \GGG  the linearized strain. The notation {\rm Dir} stands for the set-valued ``direction'' (see Subsection \ref{subs:kin}), {\rm dev} $\sigma$ identifies the deviatoric part of the stress $\sigma$, namely ${\rm dev}\,\sigma:=\sigma- {\rm tr }\,(\sigma){\rm Id}/d$, $\zeta$ is the local potential 
of dissipative damage-driving force (see \eqref{def-of-zeta}), \UUU
constraining the damage process to be unidirectional (no
healing). \GGG Finally $\phi$ is the energy associated to the creation
of microvoids or microcracks during the damaging process, $\kappa$ is
the length scale \UUU of \GGG  the damage profile, and $\rho$ the mass
density. We refer to Section \ref{sec:notation} for the precise
setting of the problem, \UUU the \GGG definition of weak solution to
\eqref{system-u-int}--\eqref{system-alpha-int}, and \UUU the \GGG statement of Theorem \ref{thm:main}. %\\
\EEE

\UUU The analysis of model \eqref{system-u-int} \GGG presents several technical challenges. \EEE Perfect plasticity 
allows for plastic strain concentrations
along the (possibly infinitesimally thin) slip-bands and  
\GGG calls for weak formulations in the spaces of bounded Radon \EEE
measures \UUU for plastic strains \EEE and bounded-deformation
($BD$) \UUU for displacements. This \EEE requires a delicate notion of stress-strain duality (see Subsection \ref{subs:weak}). Considering inertia and the related kinetic energy \GGG renders the analysis quite delicate because of \EEE the interaction of possible 
elastic waves with nonlinearly responding slip bands, 
as pointed out already in \cite{BarRou13NATC}. \GGG Various natural extensions such as \EEE allowing healing 
instead of unidirectional damage, or mutually independent damage in the viscous and the elastic
response (in contrast to \eq{eq:def-D} below), or \UUU different
damage behaviors in relation \EEE to compression/tension mode leading to a non-quadratic stored 
energy, or an enhancement by heat generation/transfer 
with some thermal coupling to the mechanical part, seem difficult and remain currently
open. %\\

\GGG
The proof strategy relies on a staggered discretization scheme, in which at each time-step we first identify the damage variable as a solution to the damage evolution equation, and we then determine the plastic strain and elastic displacements as minimizer of a damage-dependent energy inequality (see Section \ref{sec:time-discrete}). A standard test of \eqref{system-u-int}--\eqref{system-alpha-int} leads to the proof of a first a-priori estimate in Proposition \ref{prop:unif-estimates}. In order to ensure the strong convergence of the time-discrete elastic strains $e_{\rm el}$, \EEE needed 
for the limit passage in the damage flow rule, \GGG a further higher order test is performed in Proposition \ref{prop:higher-order}. The convergence of the elastic strains is then achieved by means of a delicate limsup estimate (see Proposition \ref{prop:strong-convergence}). \EEE Due to the failure of energy conservation under basic data qualification, \GGG the flow rule is only recovered, in the limit, in the form of an energy inequality (see Remark \ref{rk:no-en-cons}). %\\

A motivation for tackling the simultaneous occurrence of dynamical perfect plasticity and damaging is the mathematical modeling of cataclasite zones in geophysics. During fast slips, lithospheric faults in elastic rocks tend to emit elastic (seismic) waves, which in turn determine the occurrence of (tectonic) earthquakes, and the local arising of cataclasis. This latter phenomenon consists in a gradual fracturing of mineral grains into core zones of lithospheric faults, which tend to arrange themselves into slip bands, sliding plastically on each other without further fracturing of the material. On the one hand, cataclasite core zone are often very narrow (sometimes
centimeters wide) in comparison
with the surrounding compact rocks (which typically extend for many kilometers),
and can be hence modeled for 
rather small time scales (minutes of ongoing earthquakes or years between 
them, rather than millions of years) via small-strain perfect (no-gradient) plasticity. On the other hand
the partially damaged area surrounding the thin cataclasite
core can by relatively wide, and thus calls for a modeling via
gradient-damage theories (see \cite{Roub17GMHF,tr-us-seismo}).

The novelty of our contribution is threefold. First, we extend the
mathematical modeling of damage-evolution effects to an inelastic
setting. Second, we characterize the interaction between damage onset
and plastic slips formation in the framework of perfect plasticity,
with no gradient regularization and in the absence of
hardening. Third, we complement the study of dynamic perfect
plasticity, by keeping track of the effects of damage both on the
plastic yield surface, and on the \UUU viscoelastic \EEE behavior of the material.

The paper is organized as follows: In Section~\ref{sec:notation}, we introduce some basic notation and modeling assumptions, and we state our main existence result. Section~\ref{sec:formal} highlights the formal strategy that will be employed afterward for the proof of Theorem \ref{thm:main}, whereas Section~\ref{sec:time-discrete} focuses on the formulation of our staggered two-step discretization scheme. In Section ~\ref{sec-apriori} we establish some a-priori energy estimates. Finally Section~\ref{sec:proof-main} is devoted to the proof of the main result.
\EEE

\section{Setting of the problem and statement of the main result}
%        ~~~~~~~~~~~~~~~~~~~~~~~~~~~~~~~~~~~~~~~~~~~~~~~~~~~~~~~~
\label{sec:notation}
We devote this section to specify the mathematical setting of the model, and 
to present our main result. We first introduce some basic notation and 
assumptions, and we recall some notions from measure theory. 

In what follows, let $\Omega\subset \R^d$, $d\in \{2,3\}$ be a bounded open set with $C^2$ boundary. 
In our model, the domain $\Omega$ represents the reference configuration of a 
linearly \UUU viscoelastic, \EEE perfectly plastic Kelvin-Voigt  
body subject to a possible damage in its elastic as well as in its viscous and
plastic response.

We assume that the boundary $\partial\Omega=:\Gamma$ is 
partitioned into the union of two disjoint sets $\Gamma_{\text{\sc d}}$ and $\Gamma_{\text{\sc n}}$. In particular, we require $\Gamma_{\text{\sc d}}$ to be a connected open subset of $\Gamma$ (in the relative topology of $\Gamma$) such that $\partial_{\Gamma}\Gamma_{\text{\sc d}}$ is a connected, \GGG $(d-2)$-dimensional, \EEE $C^2$ manifold, whereas $\Gamma_{\text{\sc n}}$ is defined as $\Gamma_{\text{\sc n}}:=\Gamma\setminus \Gamma_{\text{\sc d}}$.

%\subsection{Tensors}
%           ~~~~~~~
For any map $f:[0,T]\times\R^d\to\R$ we will denote
by $\dot{f}$ its time derivative, and by $\nabla f$ its spatial
gradient. We will adopt the notation $\mthree$ to indicate the set of 
$d\times d$ 
%real
matrices. Given $M,N\in\mthree$, their scalar
product will be denoted by $M:N:= {\rm tr} (M^\top N)$ where ${\rm tr}$ is the
  trace operator, and the superscript stands for transposition. We will write ${\rm dev }\,M$ to identify the
  deviatoric part of $M$, namely ${\rm dev}\,M:=M- {\rm tr }\,(M){\rm
    Id}/d$, where ${\rm Id}$ is the identity matrix. The symbols $\mthree_{\rm sym}$ and $\mthree_{\rm dev}$ will represent the set of symmetric $d\times d$ matrices, and that of symmetric matrices having null trace, respectively.

\subsection{Function spaces, measures and functions with bounded deformation}
%           ~~~~~~~~~
 
We use the standard notation $L^p$, $W^{k,p}$, and $L^p(0,T;X)$ 
or $W^{1,p}(0,T;X)$ for Lebesgue, Sobolev, and Bochner or Bochner-Sobolev
spaces. By $C_{\rm w}(0,T;X)$ we denote the space of weakly continuous
mappings \GGG with value \EEE in the Banach space $X$.
\GGG We also \EEE use the shorthand convention $H^k:=W^{k,2}$.  

Given a Borel set $B\subset\R^d$ the symbol
$\mathcal{M}_b(B;\R^m)$ denotes the space of bounded
Borel measures on $B$ with values in $\R^m\ (m \in \N)$.  When
$m=1$ we will simply write $\mathcal{M}_b(B)$. We will endow
$\mathcal{M}_b(B; \R^m)$ with the norm $\|\mu\|_{\mathcal{M}_b(B;\R^m)}:=|\mu|(B)$, where
$|\mu|\in\mathcal{M}_b(B)$ is the total variation of the measure $\mu$. 

If the relative topology of $B$ is locally compact, by the Riesz
representation Theorem the space $\mathcal{M}_b(B; \R^m)$ can be identified 
with the dual of $C_0(B;\R^m)$, which is the space of continuous functions 
$\varphi:B\to \R^m$ such that
the set $\{|\varphi|\geq\delta\}$ is compact for every $\delta>0$. The weak* 
topology on $\mathcal{M}_b(B;\R^m)$ is defined using this duality.

%\subsection{Functions with bounded deformation}
%           ~~~~~~~~~~~~~~~~~~~~~~~~~~~~~~~~~~
The space $BD(\Omega;\R^d)$ of functions with {\em bounded deformation} is
the space of all functions $u\in L^1(\Omega;\R^d)$ whose symmetric gradient  
$$
e(u):=\frac{\nabla u+(\nabla u)^{\top}}{2}
$$
(defined in the sense of distributions) 
belongs to $\mathcal{M}_b(\Omega;\mthree_{\rm sym})$. It is easy to see that $BD(\Omega;\R^d)$ is a 
Banach space \UUU when \EEE endowed with the norm
$$
\|u\|_{L^1(\Omega;\R^d)} +\|e(u)\|_{\mathcal{M}_b(\Omega;\mthree_{\rm sym})}.
$$
A sequence $\{u^k\}$ is said to converge to $u$ weakly* in 
$BD(\Omega;\R^d)$ if $u^k\wk u$ weakly in
$L^1(\Omega;\R^d)$ and $e(u^k)\wk e(u)$ weakly* in 
$\mathcal{M}_b(\Omega;\mthree_{\rm sym})$. Every bounded
sequence in $BD(\Omega;\R^d)$ has a weakly* converging subsequence. In our setting, since $\Omega$ is bounded and has $C^2$ boundary, $BD(\Omega;\R^d)$ can be embedded into $L^{d/{(d-1)}}(\Omega;\R^d)$ and every function $u\in BD(\Omega;\R^d)$ has a trace,
still denoted by $u$, which belongs to $L^1(\Gamma;\R^d)$. For every
nonempty subset \GGG $\gamma$ \EEE of \UUU $\Gamma_{\text{\sc d}}$  \EEE which
is open in the relative topology of \GGG $\Gamma_{\text{\sc d}}$ \EEE, there exists a constant
$C>0$, depending on $\Omega$ and \GGG $\gamma$, \EEE such that the following Korn inequality holds true
\begin{equation}\label{eq:korn}
\|u\|_{L^1(\Omega;\R^d)}\leq C\|u\|_{L^1(\GGG \gamma
  \EEE ;\R^d)}+C\|e(u)\|_{\mathcal{M}_b(U;\mthree_{\rm sym})}
\end{equation}
(see \cite[Chapter~II, Proposition~2.4 and Remark~2.5]{temam}). 
For the general properties of the space $BD(\Omega;\R^d)$ we refer to~\cite{temam}.

\subsection{State of the system and admissible displacements and strains}
%           ~~~~~~~~~~~~~~~~~~~~~~~~~~~~~~~~~~~~~~~~~~~~~~~~~~~~~~~~~~~~
At each time $t\in [0,T]$, the \UUU viscoelastic \EEE perfectly-plastic
behavior of the body is \UUU described by \EEE three basic state variables: the displacement $u(t):\Omega\to \R^d$, the plastic strain 
$\pi(t):\Omega\to \mthree_{\rm dev}$, and the damage variable $\alpha(t):\Omega\to [0,1]$.  
In particular, we adopt the convention (used in mathematics, in contrast to the opposite 
convention used in engineering and geophysics) that $\alpha=1$ corresponds to the 
\UUU undamaged \EEE elastic material, whereas $\alpha=0$ describes the situation in which the material 
is totally damaged. The abstract state $q$ will be here \GGG given by \EEE the triple $q=(u,\pi,\alpha)$.

On $\Gamma_{\text{\sc d}}$ 
%for every $t\in [0,T]$ 
we prescribe a boundary datum $u_{\text{\sc d}}
%(t)
\in  H^{1/2}(\Gamma_{\text{\sc d}};\R^d)$, later being \UUU considered
to be \EEE time dependent. 
With a slight abuse of notation we also denote by 
%$u_{\text{\sc d}}(t)$
$u_{\text{\sc d}}$ 
a $H^1(\Omega;\R^d)$-extension of the boundary condition to the set $\Omega$. 

The {\em set of admissible displacements and strains} for the boundary datum 
%$u_{\text{\sc d}}(t)$ 
$u_{\text{\sc d}}$ is given by  
\begin{align}
\nn\pdeor(u_{\text{\sc d}}
%(t)
):=&\Big\{(u,e_{\rm el},\pi)\in \big(BD(\Omega;\R^d)\cap L^2(\Omega)\big)\times L^2(\Omega;\mthree_{\rm sym})\times \mb:\,\\
&\quad e(u)=e_{\rm el}+\pi\text{ in }\Omega,
\label{eq:def-A-w}\quad \pi=(u_{\text{\sc d}}
%(t)
-u)\odot\nu_\Gamma^{}\mathcal{H}^{d-1}\text{ on }\Gamma_{\text{\sc d}}\Big\},
\end{align}
where $\odot$ stands for the symmetrized tensor product, namely 
$$
a\odot b:=(a\otimes b+b\otimes a)/2\quad\forall\,a,b\in\R^d,
$$
$\nu_\Gamma^{}$ is the outer unit normal to $\Gamma$, and 
$\HH^{d-1}$ is the $(d-1)$-dimensional Hausdorff measure. % The function $u$ 
% represents the {\em displacement} of the body, while $e_{\rm el}$ and $\pi$ are 
% called the {\em elastic} and {\em plastic strain}, respectively.
\UUU Note that the kinematic relation $e(u) = e_{\rm el}+\pi$ in
$\nn\pdeor(u_{\text{\sc d}})$  is classic in linearized elastic
theories and it is usually
referred to as additive strain decomposition. \EEE

We point out that the constraint
\be{eq:relaxed-bc}
\pi=(u_{\text{\sc d}}-u)\odot\nu_\Gamma^{}\mathcal{H}^{d-1}\text{ on }\Gamma_{\text{\sc d}}
\ee
is a relaxed formulation of the boundary condition $u=u_{\text{\sc d}}$ on $\Gamma_{\text{\sc d}}$;
see also \cite{mora}. As remarked in \cite{dalmaso.desimone.mora06}, the mechanical meaning of 
\eqref{eq:relaxed-bc} is that whenever the boundary datum is not attained a plastic slip 
develops, whose amount is directly proportional to the difference between the displacement 
$u$ and the boundary condition $u_{\text{\sc d}}$.

%\subsection{The elastic-moduli and visco-elastic-moduli tensors}
%\subsection{The visco-elastic-moduli tensors}
\subsection{Stored energy}\label{subs:el-visc}
%           ~~~~~~~~~~~~~

Let $\mathcal{L}_{\rm sym}(\mthree_{\rm sym})$ denote the space of
linear symmetric \GGG (self-adjoint) \EEE operators $\mthree_{\rm sym}\to\mthree_{\rm sym}$, 
%\COMMENT{OK??}
being understood as 4th-order %\COMMENT{OK??}
%second order 
symmetric tensors. % on $\mthree$. 

We assume the elastic tensor $\C:\R\to \mathcal{L}_{\rm sym}(\mthree_{\rm sym})$ to be 
continuously differentiable, and nondecreasing in the sense of the L\"owner 
ordering, i.e.\ the ordering of $\mthree_{\rm sym}$ \GGG with respect to \EEE the cone of \GGG positive \EEE
semidefinite matrices. Additionally, we require $\C(\alpha)$ to be 
%symmetric and
positive semi-definite for every $\alpha\in\R$. Note that, in view of the 
pointwise semi-definiteness of $\C$, the possibility of having complete 
damage in the elastic part is also encoded in the model. We additionally 
assume that $\C(\alpha)=\C(0)$ for every $\alpha<0$, and that
$\C'(0)=0$. \UUU This corresponds to \EEE the situation in which the damage is cohesive. 
%
%\COMMENT{$\phi'(0)$....}

The \emph{stored energy} of the model will be given by
\begin{align}\label{def-of-E}
\mathscr{E}(q)=\mathscr{E}(u,\pi,\alpha)
=\int_{\Omega}\GGG \Big(\EEE\frac12\C(\alpha)e_{\rm el}:e_{\rm el}-\phi(\alpha)
+\frac\kappa p|\nabla\alpha|^p\GGG\Big)\EEE\,\d x
\ \ \ \ \text{ with }\ \ e_{\rm el}=e(u)-\pi,
\end{align}
\GGG where $\phi:\R\to \R$ stands \EEE for the specific \emph{energy of damage}, %which %(if $\phi$ 
%is increasing) gives rise to a driving force for healing (which is not considered
%her, however) and which is microscopically 
motivated by extra energy of 
microvoids or microcracks created by degradation of the material during \GGG the \EEE damaging process\GGG, whereas $\kappa$ represents a length scale for the damage profile. \EEE
%The negative sign in \eq{def-of-E} is to make $\phi$ rather increasing consistently
%with $\bbC$. 
When $\phi'(\alpha)>0$, the damage evolution is an activated processes,
even if there is no activation threshold in the dissipation potential, as 
indeed considered in \eq{def-of-zeta} below.
%  \COL{may} guaranteeing that the damage process is activated. 

\UUU For the sake of allowing full generality to the choice of initial
conditions, we will assume that  ${\rm dev}\bbC e=\bbC{\rm
  dev}\,e$. Note that this is the case for 
  isotropic materials. \EEE

\subsection{Other \GGG ingredients:  dissipation and kinetic \GGG energy \EEE}\label{sub:diss}
%           ~~~~~~~~~~~~~~~~~~~~~~~~~~~~~~~~~~~~~~~~~~~~~~~~~
\UUU For the sake of \EEE notational simplicity, we consider isotropic materials as far as 
plastification \GGG is concerned. \EEE
%, although the viscoelastic reponse might be anisotropic.

Let \UUU the {\it yield stress} $\SYLD$ as a function of damage \EEE $\SYLD:[0,1]\to (0,+\infty)$ be continuously differentiable and 
non-decreasing. %; the meaning of this function is a yield stress which 
%activates plastification. 
For every 
$\pi\in\mathcal{M}_b(\Omega\cup\Gamma_{\text{\sc d}};\mthree_{\rm dev})$ let 
$\d\pi/\d|\pi|$ be the Radon-Nikod\'ym derivative of $\pi$ with respect to
its \GGG total \EEE variation $|\pi|$. Assuming that $\alpha:[0,T]\times \Omega\to [0,1]$ is 
continuous, we consider the positively one-homogeneous function 
$
%R(\alpha,M):=
 M\mapsto\SYLD(\alpha)|M|$ for every $M\in\mthree$, and, according to the 
theory of convex functions of measures \cite{goffman.serrin}, we introduce the 
\GGG functional \EEE 
$$
\mathcal{R}(\alpha,\pi):=
%\int_{\Omega}R\Big(\alpha,\frac{\d \pi}{\d|\pi|}\Big)\,\d |\pi|
\int_{\Omega\GGG \cup \Gamma_{\text{\sc d}}}\SYLD(\alpha)\frac{\d \pi}{\d|\pi|}\,\d |\pi|
$$
for every $\pi\in\mathcal{M}_b(\Omega\cup\Gamma_{\text{\sc d}};\mthree_{\rm dev})$.

In what follows, we will refer to $\mathcal{R}$ as to the 
{\it damage-dependent plastic dissipation potential}. Note that, by 
Reshetnyak's lower semicontinuity theorem (see \cite[Theorem 2.38 ]{ambrosio.fusco.pallara}), 
the functional $\mathcal{R}$ is lower-semicontinuous in its second variable 
with respect to the weak* convergence in \mbox{$\mb$}.

For $\alpha$ continuous and such that $\DT{\alpha}\leq 0$ in 
$[0,T]\times\Omega$, we define the 
{\em $\alpha $-weighted $\mathcal{R}$-dissipation} of a map $t\mapsto \pi(t)$ 
in the interval $[s_1,s_2]$ as
\begin{align}
%\nn 
D_{\mathcal{R}}(\alpha;\pi;s_1,s_2):=\sup &\Bigg\{\sum_{j=1}^n
\mathcal{R}\big(\alpha(t_j),\pi(t_{j}){-}\pi(t_{j-1})\big): \
%t_0,t_n\in [s_1,s_2],
%\\&\quad
s_1\le t_0< t_1<\dots< t_n\le s_2,\ n\in\N\Bigg\}.
\label{eq:def-a-R-diss}
\end{align}

We will work under the assumption that the damage is unidirectional, i.e.\
%namely \be{eq:unidirectional}
$\DT{\alpha}\leq 0$.
%\ee
Constraining the rate rather than the state itself,  
this constraint is to be incorporated into the dissipation potential.
For a (small) damage-viscosity parameter \GGG $\eta>0$ \EEE, we define the local potential 
of dissipative damage-driving force as
%We denote by $\mathbb{I}_{(-\infty,0]}$ the indicator function of the interval $(-\infty,0]$, given by
\begin{align}\label{def-of-zeta}
%\mathbb{I}_{(-\infty,0]}(s):=\begin{cases}0&\text{if }s\in (-\infty,0],\\+\infty&\text{otherwise},
%\end{cases}\COMMENT{\ WHY\ NOT:\ }
\zeta(\DT\alpha):=\begin{cases}\displaystyle\frac12\eta\DT\alpha^2
&\text{if }\DT\alpha\le0,
\\+\infty.&\text{otherwise} \end{cases}
\end{align}
%and by the symbol $\partial$ the subdifferential in the sense of Convex
%Analysis \cite{brezis}. %\COMMENT{DID Brezis INTRODUCE convex subdifferential ??}

\UUU Let the viscoelastic \EEE tensor $\D:\R\to \mathcal{L}_{\rm
  sym}(\mthree_{\rm sym})$ \UUU be given and \EEE define the overall potential of dissipative forces %\COMMENT{SHOULD NOT the 2nd int. be over $\bar\Omega$??}
\begin{align}\nonumber
\mathscr{R}(q;\DT q)&=\mathscr{R}(\alpha;\DT u,\DT\pi,\DT\alpha)
\\&=\int_\Omega\GGG\Big(\EEE\frac12\D(\alpha)\DT e_{\rm el}:\DT e_{\rm el}
+\zeta(\DT\alpha)\GGG\Big)\EEE\,\d x+\int_{\UUU \Omega \cup \Gamma_{\text{\sc d}}}\SYLD(\alpha)\frac{\d\DT\pi}{\d|\DT\pi|}\,\d |\DT\pi|
\ \ \ \text{ where }\ \DT e_{\rm el}=e(\DT u)-\DT\pi.
\label{def-of-R}\end{align}

\GGG Let $\rho\in L^{\infty}(\Omega)$, with $\rho>0$ almost everywhere in $\Omega$ denote the mass density. We will \EEE
additionally consider the  \emph{kinetic energy} given by \EEE
\begin{equation}
\mathscr{T}(\DT u)=\int_{\Omega}\frac{1}{2}\rho|\DT{u}|^2\,\d x.\label{ottostar}
\end{equation}

%\subsection{Kinematics of the model}
\subsection{Governing equations by Hamilton variational principle}
%           ~~~~~~~~~~~~~~~~~~~~~~~~~~~~~~~~~~~~~~~~~~~~~~~~~~~~~
\label{subs:kin}

\UUU We formulate the model via \EEE {\it Hamilton's 
variational principle}\index{Hamilton variational principle} generalized for 
dissipative systems \cite{Bedf85HPCM}. \UUU This prescribes \EEE that, among all admissible motions $q=q(t)$ 
on a fixed time interval $[0,T]$ \UUU given the initial and final
states $q(0)$ and $q(T)$, \EEE the actual motion is \UUU a stationary
point of the {\it action} \EEE
\begin{align}\label{hamilton}
\int_0^T{\mathscr L}\big(t,q,\DT{q}\big)\,\d t% \ \ \mbox{ is stationar\ \ 
% (i.e.~$q$ is its 
% \emph{critical point})},\index{critical point!for Lagrangean}
\end{align}
where $\DT{q}=\frac{\pl}{\pl t}{q}$ and the 
\emph{Lagrangean}\index{Lagrangean} ${\mathscr L}(t,q,\DT{q})$ is
defined \UUU as \EEE 
%\begin{subequations}
\begin{align}\label{Lagrangean}
&{\mathscr L}\big(t,q,\DT q\big):={\mathscr T}\big(\DT q\big)
-{\mathscr E}(q)+\langle F(t),q\rangle\ \ \ \text{ \UUU with \EEE }\ \ F=F_0(t)-\pl_{\DT q}{\mathscr R}(q,\DT q)\,.
\end{align}
\UUU This corresponds to \EEE 
%$F=-\pl_{\DT q}{\mathscr R}(q,\DT q)$ 
the sum of \UUU external \EEE time-dependent loading and the (negative) nonconservative 
force assumed for a moment fixed. %, with ${\mathscr R}(q,\cdot)$ denoting the 
%(Rayleigh's \emph{pseudo}-) \emph{potential}\index{potential!of disipative force} of the 
%{\it dissipative force}. 
In addition to ${\mathscr E}$, ${\mathscr R}$, and  ${\mathscr T}$
from Sections~\ref{subs:el-visc} and \ref{sub:diss},
%,\index{force!dissipative} i.e.\ here $$.
we define the outer loading $F_0$ as $\langle F_0(t),q\rangle=
\int_\Omega f\cdot u\,\d x$, \GGG where $f$ is a time-dependent external body load. \EEE

\UUU The corresponding Euler-Lagrange equations read \EEE
\begin{align}
\pl_u^{}{\mathscr L}\big(t,q,\DT{q}\big)
-\frac{\d}{\d t}\pl_{\DT q}^{}{\mathscr L}\big(t,q,\DT{q}\big)=0.
\end{align}
This gives the %momentum
abstract 2nd-order evolution equation 
\begin{align}\label{abstract-momentum-eq}
\UUU \partial^2{\mathscr T} \EEE\DDT q+\pl_{\DT q}{\mathscr R}(q,\DT q)+{\mathscr E}'(q)=F_0(t)
%+[C']^*\pi\ \ \text{ with }\ \ C(u)=0\ ,
\end{align} 
where \UUU$ \pl$ \EEE indicates the (partial) G\^ateaux
differential. \UUU Let us now rewrite the abstract relation
\eqref{abstract-momentum-eq} in terms of our specific choices 
\eqref{def-of-E}, \eqref{def-of-zeta}-\eqref{ottostar}. We have \EEE
% The notation $\Delta_p\alpha$ represents the $p$-laplacian of $\alpha$, defined 
% as \COMMENT{some coeffcient ???}
% $$
% \Delta_p \alpha:={\rm div}\,(|\nabla \alpha|^{p-2}\nabla \alpha).
% $$

% The abstract dynamics \eq{abstract-momentum-eq} with the concrete functionals
% from Sections~\ref{subs:el-visc}--\ref{sub:diss} gives the 
%  \REPLACE{The kinematic of the model is characterized by the following constitutive equations:}
{the following equation/inclusion on $[0,T]\times\Omega$:}
 \begin{subequations}\label{system}
\begin{align}\label{system-u-new}
&\rho \DDT u-{\rm div}\,\sigma
%\big(\C(\alpha) e_{\rm el}+\D(\alpha)\DT{e}_{\rm el}\big)
=f,\ \ \ 
\sigma:=\C(\alpha)e_{\rm el}+\D(\alpha)\DT e_{\rm el},\ \ e_{\rm el}=e(u)-\pi,\\
&\label{system-p}
%\partial R(\alpha,\DT{\pi})
\SYLD(\alpha){\rm Dir}(\DT{\pi})\ni {\rm dev}\,\sigma,
\\
&\label{system-alpha}
\partial \zeta(\DT\alpha)
%\mathbb{I}_{(-\infty,0]}(\DT \alpha)+\eta \DT\alpha%p_R
+\frac12 \C'(\alpha)e_{\rm el}:e_{\rm el}\ni \phi'(\alpha)\UUU +
{\rm div}\, (\kappa |\nabla  \alpha|^{p-2}\nabla  \alpha) \EEE ,
\end{align}
\end{subequations}
comple\GGG men\EEE ted %mented with 
by the boundary conditions 
\begin{align}\label{bc}
&%(\D(\alpha)\DT{e}_{\rm el}+\C(\alpha)e_{\rm el})
\sigma\nu_\Gamma^{}=0\quad \text{on }[0,T]\times\Gamma_{\text{\sc n}},\quad
%\\&\nonumber 
u=u_{\text{\sc d}}\quad \text{on }[0,T]\times\Gamma_{\text{\sc d}},\quad
%\\&\nonumber 
\nabla \alpha\cdot \nu_\Gamma^{}=0\quad\text{on }[0,T]\times\Gamma.
\end{align}
The notation ${\rm Dir}:\Rdev\rightrightarrows\Rdev$ in \eq{system-p}
means the set-valued ``direction'' mapping defined by
 ${\rm Dir}(\DT \pi):=[\pl|\cdot|](\DT\pi)$. \UUU In particular
$$ {\rm Dir} (\DT \pi)= 
\left\{
\begin{array}{ll}
 \DT \pi/|\DT \pi| \quad &\text{if} \ \  \DT\pi \not = 0\\[1mm]
\{d \in \Rdev \ : \ |d| \leq 1\}&\text{if} \ \  \DT\pi  = 0
\end{array}
\right.
$$
Relations \eqref{system-u-new}, \eqref{system-p}, and
\eqref{system-alpha} correspond to the equilibrium equation and constitutive relation, the plastic flow
rule, and the evolution law for damage, respectively.  
 
The above boundary-value problem is complemented with initial
conditions as follows \EEE , 
\begin{align}\label{initial conditions}
u(0)=u_0,\quad \pi(0)=\pi_0,\quad \alpha(0)=\alpha_0, \quad \DT{u}(0)=v_0.
\end{align}

%In the system of equations above, $\rho>0$ denotes the density of the body, the notation 
%$e_{\rm el}$ stands as in \eqref{eq:def-A-w} for $e_{\rm el}:=e(u)-\pi$, and the stress 
%$\sigma$ is defined as\begin{align}\label{stress}
%\sigma:=\C(\alpha)e_{\rm el}+\D(\alpha)\DT e_{\rm el}.\end{align}

We point out that the monotonicity of $\C$, combined with the 
unidirectionality \UUU ($\DT \alpha \leq 0$) \EEE of damage implies that 
\begin{equation}\label{eq:monoton1}
\DT \alpha \C'(\alpha)e:e\leq 0\quad\text{for every }e\in \mthree,
\end{equation}
namely $\DT \alpha \C'(\alpha)$ is negative semi-definite.  By the monotonicity 
of $\SYLD$, the unidirectionality of damage also \GGG yields \EEE that 
\begin{equation}\label{eq:monoton2}
\DT \alpha \SYLD'(\alpha)\leq 0.
\end{equation}

%\subsection{Energetics of the model}
%           ~~~~~~~~~~~~~~~~~~~~~~~
 
The \emph{energetics} of the model \eq{system}-\eq{bc}, obtained by standard tests of 
\eq{system}
%\eqref{system-u-new}, \eqref{system-alpha}, and \eqref{system-p} 
\GGG successively \EEE against $\DT{u}$,  $\DT{\pi}$, and $\DT{\alpha}$,
%. Considering for simplicity here $u_{\text{\sc d}}=0$, this test leads 
is formally encoded by the following energy equality
\begin{align}\nonumber
&\linesunder{\int_{\Omega}\frac\rho2|\DT{u}(t)|^2\, \d x}{kinetic energy}{at time $t$}
%\,\d x
+ 
%\int_{\Omega}
\lineunder{\int_{\Omega}\frac12\C(\alpha(t)) e_{\rm el}(t):e_{\rm el}(t)-\phi(\alpha(t))
+\frac{\UUU \kappa}{p}|\nabla \alpha(t)|^p \,\d x}{stored energy at time $t$}
\\[-.7em]
&\nonumber\qquad\qquad\qquad 
+\lineunder{\int_0^{t}\!\!\int_{\Omega}\eta \DT{\alpha}^2
%\,\d x\,\d s%-\int_{\Omega} \phi(\alpha(t))\,\d x\\&\nonumber\qquad 
+
%\int_0^t\!\!\int_{\Omega}
\D(\alpha) \DT{e}_{\rm el}:\DT{e}_{\rm el}\,\d x\,\d s
%+\int_{\Omega}\frac1p|\nabla \alpha(t)|^p\,\d x
+
%\int_0^t\!\!\int_{\Omega}R(\alpha,\DT{\pi})
\SYLD(\alpha)|\DT{\pi}|\,\d x\,\d s}{dissipation on $[0,t]$}\\
&
%\nonumber
\quad=\linesunder{\int_{\Omega}\frac{\rho}{2}|v_0|^2\,\d x}{
  \UUU kinetic energy}{\UUU at time 0}
\UUU
+ \lineunder{\int_{\Omega}\frac12\C(\alpha_0) e_{\rm el}(0):e_{\rm el}(0)-\phi(\alpha_0)
+\frac{\UUU \kappa}{p}|\nabla \alpha_0|^p \,\d x}{stored energy at
time $0$}
\EEE \nonumber\\
&\qquad\qquad\qquad 
+\linesunder{\int_0^t\!\!\int_{\Omega}  f\cdot \DT{u} \,\d x\,\d s }{energy
  of}{external bulk load}
\UUU +\linesunder{\int_0^t\!\!\int_{\Gamma_{\text{\sc d}}}  \sigma \nu_\Gamma
  \cdot \DT u_{\text{\sc d}} \,\d {\mathcal H}^{d-1}\,\d s }{energy
  of}{boundary condition}
%+\int_0^{t} \int_{\Gamma_{\text{\sc d}}} \sigma \nu_\Gamma^{}\cdot \DT{u}_D\, \d x\,\d s\\&
\label{eq:formal-en-equality}
%\qquad 
\end{align}\UUU
where the last term has to be interpreted in the sense of
\eqref{eq:def-bd-term} below. \EEE
A \UUU rigorous \EEE derivation of the energy equality above will be presented 
in Subsection \ref{subs:energetics}.

\subsection{Statement of the main result}
%           ~~~~~~~~~~~~~~~~~~~~~~~~~~~~
\UUU Let $p>d$ be given and  \EEE assume that the data of the problem satisfy the following conditions:
\begin{subequations}\label{eq:hp}\begin{align}
&u_0\in L^2(\Omega;\R^d)\cap
BD(\Omega;\R^d),\ \ \UUU v_0 \in H^1(\Omega;\R^d), \EEE \nonumber\\
& 
\pi_0\in \mb, \ \ \UUU \DT\pi_0 \in L^2(\Omega;\Rdev),\label{eq:hp-in-data}  \EEE
\\&\label{eq:hp-in-data+}\UUU
(u_0,e(u_0) - \pi_0, \pi_0) \in \mathscr{A}(u_{\text{\sc d}}(0)), \ \
(v_0,e(v_0) - \DT \pi_0, \DT \pi_0) \in \mathscr{A}(\DT u_{\text{\sc d}}(0)),\EEE
%\ \ \text{and}\,
\\&\label{eq:hp-in-data++}
\alpha_0\in W^{1,p}(\Omega),\ \
\ 0\le\alpha_0\le1,\nonumber\\
& \UUU
\SYLD(\alpha_0){\rm Dir}(\DT \pi_0)\ni {\rm dev}\, ({\mathbb C}(\alpha_0)
(e(u_0){-}\pi_0) + {\mathbb D}(\alpha_0)(e(v_0){-}\DT \pi_0)), \EEE
\\
&\label{eq:hp-f}f\in {L^2}(0,T;L^2(\Omega;\R^d)),
%\text{ and } t\to\|f(t)\|_{L^2(\Omega;\R^d)}\,\text{has at most countable discontinuity points},
%\\&\label{eq:hp-ud} 
\ \ \ u_{\text{\sc d}}\in W^{2,\infty}(0,T;L^2(\Omega;\R^d))\cap H^1(0,T;H^1(\Omega;\R^d)).
\end{align}\end{subequations}

\UUU The regularity requirements in \eqref{eq:hp} for $v_0$ and $\DT
\pi_0$ and the compatibility condition in \eqref{eq:hp-in-data++} are
needed in order to make some higher-order estimate rigorous, see
Subsection \ref{subs:higher-order}. \EEE

We \UUU now \EEE introduce \UUU the \EEE  notion of weak solution to \eqref{system}--\eqref{initial conditions}.

\begin{definition}[Weak solution to \eqref{system}--\eqref{initial conditions}]
\label{def:weak}
A quadruple
\begin{align*} 
 u&\in L^{\infty}(0,T;BD(\Omega;\R^d))\GGG \cap
                                H^2(0,T;L^2(\Omega;\R^d))\EEE
\\
e_{\rm el} &\in  H^1(0,T;L^2(\Omega;\mthree_{\rm sym})),\\
\pi &\in
  BV(0,T;\mb),\\
\alpha &\in \big(H^1(0,T;L^2(\Omega))\cap C_{\rm
    w}(0,T;W^{1,p}(\Omega)) 
\end{align*}
%\UUU with \EEE $\DT u\in C_{\rm w}(0,T;L^2(\Omega;\R^d))$
is a \emph{weak solution} to \eqref{system}--\eqref{initial conditions} if it satisfies \eqref{initial conditions}, 
and the following conditions are fulfilled:
\begin{itemize}
\item[(C1)] $(u(t),e_{\rm el}(t),\pi(t))\in \mathscr{A}(u_{\text{\sc d}}(t))$ for every $t\in [0,T]$ (see \eqref{eq:def-A-w});
\item[(C2)] The equilibrium equation \eqref{system-u-new} holds \UUU
  almost everywhere in $\Omega \times (0,T)$; \EEE  
\item[(C3)] %For every $t\in [0,T]$, 
The quadruple $(u,e_{\rm
    el},\pi,\alpha)$ satisfies the energy inequality  
\begin{align}
&\nonumber
\int_{\Omega}\frac\rho2|\DT{u}(\GGG T \EEE)|^2\,\d x
+\int_0^{\GGG T \EEE}\!\!\int_{\Omega}\rho \DT{u}\cdot\GGG\DDT{{u}}\EEE_{\text{\sc d}}\,\d x\,\d s
\\
&\nonumber\qquad +\int_{\Omega}\GGG \Big(\frac12\C(\alpha(\GGG T \EEE)){e}_{{\rm
    el}}(\GGG T \EEE):{e}_{{\rm el}}(\GGG T \EEE)- \phi(\alpha(\GGG T \EEE))+ \frac{\kappa}{p}|\nabla \alpha(\GGG T \EEE)|^p\Big) \EEE\,\d x\\
&\nonumber\qquad +D_{\mathcal{R}}(\alpha;\pi;0,\GGG T \EEE)+\int_0^{\GGG T \EEE}\!\!\int_{\Omega}\GGG\Big(\D(\alpha)\DT{e}_{{\rm el}}:\DT{e}_{{\rm el}}
+\eta \DT{\alpha}^2\Big)\EEE\,\d x\,\d t \\
&\nonumber \quad\le\int_{\Omega}\frac\rho2 v_0^2\,\d x  +\int_{\Omega}\GGG \Big(\frac12\C(\alpha_0)(e(u_0)-\pi_0):(e(u_0)-\pi_0)- \phi(\alpha_0)+
\frac{\kappa}{p}|\nabla \alpha_0|^p\Big)\EEE\,\d x\\
&\nonumber \qquad
+\int_{\Omega}\rho \DT{u}(\GGG T \EEE)\cdot \DT{u}_{{\text{\sc d}}}(\GGG T \EEE)\,\d x
+\int_{\Omega}\rho v_0\cdot \DT{u}_{{\text{\sc d}}}(0)\, \d x\\
&\nonumber \qquad+\int_0^{\GGG T \EEE}\!\!\int_{\Omega}\GGG\Big(\EEE \C(\alpha)e_{{\rm
    el}}:e(\DT{u}_{\text{\sc d}})  +
\D(\alpha)\DT{e}_{{\rm el}}:e(\DT{u}_{{\text{\sc d}}})
%\,\d x\,\d s
+%\int_0^t \int_{\Omega}
{f}\cdot (\DT{u}-\DT{u}_{{\text{\sc d}}})\GGG\Big)\EEE\,\d x\,\d t.\label{eq:together1fin}
\end{align}
\item[(C4)]%For every $t\in [0,T]$, 
The quadruple $(u,e_{\rm el},\pi,\alpha)$ satisfies the damage inequality
\begin{align}\nonumber
&\nonumber\int_0^T\!\!\int_{\Omega} 
%\Big(
\phi'(\alpha)\varphi-\GGG \kappa \EEE |\nabla \alpha|^{p-2}\nabla\alpha\cdot\nabla\varphi
\GGG-\frac12 \EEE(\varphi{-}\DT\alpha)\C'(\alpha){e}_{{\rm el}}:{e}_{{\rm el}}-\eta \DT{\alpha}\varphi
%\Big)
\,\d x\,\d t\\[-.3em]
&\qquad\qquad\leq \int_{\Omega}\GGG\phi(\alpha(T))\EEE
%\,\d x
-
%\int_{\Omega}
\GGG\phi(\alpha_0)\EEE
%\,\d x
-
%\int_{\Omega}
\frac{\UUU \kappa}{p}|\nabla \alpha(T)|^p
%\,\d x
+
%\int_{\Omega}
\frac{\UUU \kappa}{p}|\nabla \alpha_0|^p\,\d x
%\\&\nonumber\quad
-\int_0^T\!\!\int_{\Omega}
%\DT{\alpha}\C'(\alpha)e_{{\rm el}}:{e}_{{\rm el}}
%\,\d x\,\d t-
%\int_0^t\!\!\int_{\Omega}
\eta\DT{\alpha}^2\,\d x\,\d t,
\end{align}
for all $\varphi\in W^{1,p}(\Omega)$ with $\varphi(x)\leq 0$ for a.e. $x\in \Omega$.
\end{itemize}%\COMMENT{PLASTIC FLOW RULE MISSING!!}
\end{definition}

The main result of the paper consists in showing existence of weak solutions to 
\eqref{system}--\eqref{initial conditions}.
%To be precise, we prove the following.
%Then we consider the visco-elastic tensor $\D:\R\to \mathcal{L}_{\rm sym}(\mthree_{\rm sym})$ 
%is defined as 
%\begin{equation}\label{eq:def-D}
%\D(\alpha):=\D_0+\taur \C(\alpha) \quad\text{for all}\,\alpha\in \R,
%\end{equation} where 
%$\D_0\in\mathcal{L}_{\rm sym}(\mthree_{\rm sym})$ is positive definite and 
Let us summarize the assumption on the data of the model:
\begin{subequations}\label{eq:hp+}\begin{align}
&\label{eq:hp-CD}
\bbC:\R\to \mathcal{L}_{\rm sym}(\mthree_{\rm sym})\ \text{ continously differentiable,
positive semidefinite, nondecreasing},
\\&\label{eq:def-D}
\bbD(\cdot)=\bbD_0+\taur\bbC(\cdot),
\D_0\ \text{ positive definite},\ \ \taur\ge0,
\\\label{eq:hp-yield}
&\phi:\R\to\R\ \text{ continuously differentiable, nondecreasing},
\\&\SYLD:\R\to\R\ \text{ continuously differentiable, positive, and nondecreasing,},
\\
&\label{eq:hp-cohesive}
\C'(0)=0,\ \ \ \phi'(0)\ge0,
%\\&\label{eq:hp-IC+}
%\mathrm{dev}(\bbC(\alpha_0)e_{{\rm el},0}\in L^2(\Omega;\R^d)\ \ \text{ and }\ \ 
%v_0\in H^1(\Omega;\R^d),
\\&\label{eq:eta}\UUU
\eta \in L^\infty(\Omega), \ \ \eta \geq \eta_0 >0 \ \ \text{a.e.},
\\&\label{eq:kappa}\UUU
\kappa \in L^\infty(\Omega), \ \ \kappa \geq \kappa_0 >0 \ \
\text{a.e.},
\\&\label{eq:rho}\UUU
\rho \in L^\infty(\Omega), \ \ \rho \geq \rho_0 >0 \ \ \text{a.e.}
\end{align}\end{subequations}
where $\taur>0$ is a constant denoting \UUU a \EEE relaxation
time. \UUU The structural assumption \eqref{eq:def-D} is instrumental
in making our existence theory amenable. It arises naturally by
assuming $\bbC(\cdot)$ and $\bbD(\cdot)$ to be pure second-order
polynomials of the damage variable $\alpha$, namely
$\bbC(\alpha)=\alpha^2\bbC_2$ (recall \eqref{eq:hp-cohesive}) and
$\bbD(\alpha)=\bbD_0+\alpha^2\bbD_2$. By assuming the two tensors
$\bbC_2$ and $\bbD_2$ to be spherical, namely  $\bbC_2=c_2I_4$ and
$\bbD_2=d_2I_4$ for some $c_2,\,d_2> 0$ where $I_4$ is the identity $4$-tensor, one can define $\taur=d_2/c_2$
in order to get \eqref{eq:def-D}. \EEE
%\COL{The concavity in \eq{eq:hp-yield} is, in fact, related 
%only with the particular time discretisation used in 
%Sect.~\ref{sec:time-discrete} rather than with the problem itself,
%in contrast with the other monotonicity qualification
%which seems indeed essential for the analysis performed below.}
Assumption \eq{eq:hp-cohesive} ensures that $\alpha$ stays non-negative
during the evolution even if the constraint $\alpha\ge0$ is not
explicitly \UUU included \EEE  in the problem, \UUU see Remark \ref{cohe}
below. \EEE  %The assumption {eq:hp-IC+} is needed
%to control $\DT\pi(0)$, cf.\ Sect.~\ref{subs:higher-order} below.

\begin{theorem}[Existence]
\label{thm:main}
\UUU Under assumptions \eqref{eq:hp} on initial conditions and loading
and   \eq{eq:hp+} on data there \EEE
%Under assumptions %\eqref{eq:hp-in-data}--\eqref{eq:hp-ud}
%\eqref{eq:hp}.
%together with \begin{align}&\label{eq:hp-IC+}
%%\mathrm{dev}(\bbC(\alpha_0)e_{{\rm el},0}\in L^2(\Omega;\R^d)\ \ \text{ and }\ \ 
%v_0\in H^1(\Omega;\R^d).
%%\bbD(\alpha_0){\rm dev}\,e(v_0)\in L^2(\Omega;\R^d).
%\end{align}  
 exists a weak solution to \eqref{system}--\eqref{initial conditions} in 
the sense of Definition \ref{def:weak}. Moreover, this solution \UUU
has the additional regularity \EEE
$(u, e_{\rm el},\pi)\in \UUU W^{1,\infty}(0,T;BD(\Omega;\R^d))\EEE$
$\times W^{1,\infty}(0,T;L^2(\Omega;\mthree_{\rm sym}))\times W^{1,\infty}(0,T;\mb)$.
\end{theorem}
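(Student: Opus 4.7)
The plan is to construct a weak solution via the staggered two-step time discretization scheme already announced in the introduction. Fix a time step $\tau=T/N$ and, given iterates $(u_\tau^{k-1},\pi_\tau^{k-1},\alpha_\tau^{k-1})$, Step~1 determines $\alpha_\tau^k$ as the unique minimizer in $\{\alpha\in W^{1,p}(\Omega):\alpha\le\alpha_\tau^{k-1}\}$ of a strictly convex incremental functional coming from the $\alpha$-dependent part of $\mathscr{E}$ plus the viscous term $\tau\zeta((\alpha-\alpha_\tau^{k-1})/\tau)$, evaluated at the previous elastic strain; assumption \eqref{eq:hp-cohesive}, namely $\C'(0)=0$ and $\phi'(0)\ge0$, together with the obstacle $\alpha\le\alpha_\tau^{k-1}\le\alpha_0\le 1$, ensures $0\le\alpha_\tau^k\le 1$ (see Remark~\ref{cohe}). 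Step~2, with $\alpha_\tau^k$ frozen, selects $(u_\tau^k,e_{{\rm el},\tau}^k,\pi_\tau^k)\in\mathscr{A}(u_{\text{\sc d}}(t_k))$ as a minimizer of the discrete mechanical action
\[
\int_\Omega\Big(\tfrac{\rho|u-2u_\tau^{k-1}+u_\tau^{k-2}|^2}{2\tau^2}+\tfrac12\C(\alpha_\tau^k)e_{\rm el}{:}e_{\rm el}+\tfrac{1}{2\tau}\D(\alpha_\tau^k)(e_{\rm el}{-}e_{{\rm el},\tau}^{k-1}){:}(e_{\rm el}{-}e_{{\rm el},\tau}^{k-1})\Big)\dx+\mathcal{R}(\alpha_\tau^k,\pi{-}\pi_\tau^{k-1})-\langle f(t_k),u\rangle,
\]
which is coercive and weakly lower semicontinuous on $\mathscr{A}(u_{\text{\sc d}}(t_k))$ by Reshetnyak's theorem and the Korn inequality \eqref{eq:korn}; the corresponding Euler--Lagrange conditions yield discrete analogues of \eqref{system-u-new}--\eqref{system-p}.

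Next I would derive a priori bounds by the discrete counterpart of testing \eqref{system} successively by $(\DT u,\DT\pi,\DT\alpha)$, which reproduces \eqref{eq:formal-en-equality} up to a numerical dissipation of the correct sign. This yields uniform estimates for $\DT u_\tau$ in $L^\infty(0,T;L^2)$, $e_{{\rm el},\tau}$ in $L^\infty(0,T;L^2)\cap H^1(0,T;L^2)$, $\alpha_\tau$ in $L^\infty(0,T;W^{1,p})\cap H^1(0,T;L^2)$, and $\pi_\tau$ in $BV(0,T;\mb)$. Since the limit passage in the damage flow rule requires strong convergence of $e_{{\rm el},\tau}$, a higher-order bound (Proposition~\ref{prop:higher-order}) is needed: differencing the momentum balance in time and testing by $(\DDT u_\tau,\DDT\pi_\tau)$, the compatibility \eqref{eq:hp-in-data++} on the initial data is used to absorb the boundary terms at $t=0$, providing uniform $L^\infty(0,T;L^2)$ bounds on $\DDT u_\tau$ and $\DT e_{{\rm el},\tau}$. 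These are precisely the additional regularities announced in Theorem~\ref{thm:main}.

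With these bounds in hand, classical compactness extracts $u_\tau\to u$ weakly$^*$ in $W^{1,\infty}(0,T;L^2(\Omega;\R^d))$ with $\DDT u\in L^\infty(0,T;L^2)$, $\alpha_\tau\to\alpha$ uniformly on $[0,T]\times\overline\Omega$ (via Aubin--Lions and the embedding $W^{1,p}\hookrightarrow C(\overline\Omega)$, using $p>d$), $e_{{\rm el},\tau}\rightharpoonup e_{\rm el}$ weakly in $H^1(0,T;L^2)$, and a Helly-type selection delivers $\pi_\tau\to\pi$ in a pointwise weak$^*$-$\mb$ sense with uniform $BV$ bound. Passing to the limit in the discrete momentum balance delivers (C2). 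The plastic flow rule is encoded in the energy inequality (C3), obtained from the discrete energy balance by taking the $\liminf$ on the left-hand side (Reshetnyak for $D_\mathcal{R}$, Fatou for the damage gradient term, weak lower semicontinuity for the quadratic contributions) and the limit on the right-hand side, where the time-duality between $\DDT u$ and $\DT u_{\text{\sc d}}$ is handled through the higher-order regularity. In order to pass to the limit in the damage inequality (C4), the strong $L^2$ convergence of $e_{{\rm el},\tau}$ is indispensable: this is the content of Proposition~\ref{prop:strong-convergence}, obtained by a $\limsup$ argument on $\int_0^T\int_\Omega\D(\alpha)\DT e_{\rm el}{:}\DT e_{\rm el}\dx\dt$ combining the discrete energy balance, the weak-limit momentum equation tested against $\DT u-\DT u_{\text{\sc d}}$, and the monotonicity of $\C,\D$ jointly with $\DT\alpha\le0$ expressed by \eqref{eq:monoton1}--\eqref{eq:monoton2}.

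The main technical obstacle is precisely this strong convergence. Several features conspire against a soft Minty-type argument: the plastic rate is only a bounded measure, so the $\limsup$ of the elastic dissipation must be controlled through the generalized stress-strain duality made precise in Subsection~\ref{subs:weak}; the absence of hardening renders the plastic flow inclusion genuinely rate-independent and subdifferential-valued; and the damage enters multiplicatively in front of both $e_{\rm el}$ and $\DT e_{\rm el}$, so the passage to the limit exploits in an essential way the uniform convergence of $\alpha_\tau$. A related delicacy is the duality between $\DDT u$ and $\DT u$ required to integrate the inertial term by parts in time, which is exactly what the higher-order estimate secures. Once the strong convergence of $e_{{\rm el},\tau}$ is established, (C4) follows by convexity and lower semicontinuity, while the lack of a chain rule for $t\mapsto\mathscr{E}(q(t))$ under basic data qualification (Remark~\ref{rk:no-en-cons}) explains why (C3) is recovered only as an inequality rather than equality.
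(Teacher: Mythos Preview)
Your overall architecture matches the paper's, but two key steps diverge in ways that matter.

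First, the higher-order estimate. You propose to difference the momentum balance in time and test by $(\DDT u_\tau,\DDT\pi_\tau)$, claiming an $L^\infty(0,T;L^2)$ bound on $\DDT u_\tau$. The paper does neither: it introduces $w:=u+\taur\DT u$ (with $\taur$ the relaxation time from the structural assumption $\D(\cdot)=\D_0+\taur\C(\cdot)$ in \eqref{eq:def-D}, which you never invoke), rewrites $\DDT u=(\DT w-\DT u)/\taur$, and tests the \emph{undifferenced} momentum balance against $\DT w$ together with the flow rule against $\DT\pi+\taur\DDT\pi$; see Subsection~\ref{subs:higher-order} and Proposition~\ref{prop:higher-order}. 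This delivers only $u\in H^2(0,T;L^2)$, not $W^{2,\infty}$. A naive time-differencing runs into the term $\delta\sigma_\tau^k{:}\delta^2\pi_\tau^k$, for which the flow rule gives no direct sign; the paper instead controls $[{\rm dev}\,\sigma_\tau^k{:}\delta^2\pi_\tau^k]$ via Lemma~\ref{prop:third-E-L} combined with the triangle inequality for $\mathcal{R}$ and the monotonicity of $\SYLD$, and the structure \eqref{eq:def-D} is precisely what makes the elastic part close as $\C(\alpha)\varepsilon_{\rm el}{:}\DT\varepsilon_{\rm el}+\D_0\DT e_{\rm el}{:}\DT\varepsilon_{\rm el}$ with $\varepsilon_{\rm el}=e_{\rm el}+\taur\DT e_{\rm el}$.

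Second, the strong convergence of $e_{{\rm el},\tau}$. You describe a $\limsup$ argument on the viscous dissipation, comparing the discrete energy balance with the limit momentum equation tested against $\DT u-\DT u_{\text{\sc d}}$. The paper explicitly rules this out (Subsection~\ref{subs:conv}): energy conservation fails in the limit (Remark~\ref{rk:no-en-cons}), so the usual $\limsup$ route is blocked. Instead, Proposition~\ref{prop:strong-convergence} tests the \emph{discrete} equilibrium \eqref{eq:form-disc-equilibrium} against $\delta u_\tau^k-\delta u(k\tau)$ (the difference between the approximate and limit velocities) and directly bounds $\int_0^T\!\int_\Omega\D(\underline\alpha_\tau)(\DT e_{{\rm el},\tau}{-}\DT e_{\rm el}){:}(\DT e_{{\rm el},\tau}{-}\DT e_{\rm el})\,\d x\,\d t$; the right-hand side involves $\mathcal{R}(\alpha_\tau^{k-1},\delta\pi(k\tau))-\mathcal{R}(\alpha_\tau^{k-1},\delta\pi_\tau^k)$ and the inertial term $(\bar f_\tau-\rho\DDT{\widetilde u}_\tau){\cdot}(\DT u_\tau-\DT u^\tau)$, whose $\limsup$ is shown to be $\le 0$ using the $H^2$ bound on $u$, the uniform convergence of $\alpha_\tau$, and the lower-semicontinuity \eqref{eq:liminf-dissipation}. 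A minor point: the paper's staggering is reversed relative to yours---it solves the mechanical problem first with the \emph{previous} damage $\alpha_\tau^{k-1}$ (see \eqref{eq:min-disp-strain}), then updates $\alpha_\tau^k$ via \eqref{eq:f-tau-k} using the \emph{new} elastic strain $e_{{\rm el},\tau}^k$; this ordering is what makes the telescoping in \eqref{eq:compute-el}--\eqref{eq:compute-el2} work.
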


The proof of Theorem \ref{thm:main} is postponed to Section
\ref{sec:proof-main},  \UUU where we present a \EEE conceptually implementable, numerically stable, and convergent
numerical algorithm.
Instead, we conclude this section with some final remarks.

\begin{remark}[Body and surface loads]
As pointed out in \cite[Introduction]{babadjian.mora}, for quasistatic 
evolution in perfect plasticity one has to impose a compatibility condition 
between body and surface loads, namely a \emph{safe load} to ensure that the 
body is not in a free flow. In the dynamic case, under the assumption of null 
surface loads, this condition can be weaken for what concerns body loads; 
see,{ \it{e.g.}}, \cite{kisiel}.
\end{remark}

\begin{remark}[Cohesive damage assumption]\label{cohe}
We will not include in the model reaction forces to the constraint $0\le\alpha\le1$. 
This would be encoded by rewriting \eqref{system-alpha} as
$$
\partial
%\mathbb{I}_{(-\infty,0]}(\DT \alpha)+\eta \DT\alpha
\zeta(\DT\alpha)+\frac12 \C'(\alpha)e_{\rm el}:e_{\rm el}
+p_\text{\sc r}\ni \phi'(\alpha)+\UUU {\rm div}\, (\kappa |\nabla
\alpha|^{p-2} \nabla \alpha) \EEE
\quad\text{ where }\ \ \ p_\text{\sc r}\in N_{[0,1]}(\alpha)\,;
$$
%where $$p_R\in N_{[0,1]}(\alpha).$$
here $N_{[0,1]}(\cdot)$ denote the normal cone and $p_\text{\sc r}$ is a 
``reaction pressure'' to the constraints $0\le\alpha\le1$. We point out that 
the presence of this additional term in the damage flow rule would cause a 
loss of regularity for the damage variable. In order to avoid such problem 
we will rather enforce the constraint $0\leq \alpha\leq 1$ by exploiting the 
irreversibility of damage, and by restricting our analysis to the situation 
in which the damage is cohesive.
\end{remark}

\begin{remark}[Regularity of $\Gamma$]
We remark that the $C^2$-regularity of $\Gamma$ is needed in order to 
apply \cite[Proposition 2.5]{kohn.temam}, and define a duality between stresses 
and plastic strains. For $d=2$, owing to the results in \cite{francfort.giacomini}, 
it is also possible to analyze the setting in which $\Gamma$ is Lipschitz. The same 
strategy can not be applied for $d=3$, for it would require ${\rm div}\,\sigma\in L^3(\Omega)$, 
whereas here we can only achieve ${\rm div}\,\sigma\in L^2(\Omega)$.
\end{remark}

\begin{remark}[The role of the term $\eta\DT{\alpha}$]
The term $\eta\DT{\alpha}$ in \eqref{system-alpha} guarantees strong convergence of the 
damage-interpolants in the time-discretization scheme to the limit damage variable. 
This, in turn, is a key point to ensure strong convergence of the elastic stresses, 
which is fundamental for the proof of the damage inequality in condition (C4). From a 
modeling point of view, this might be interpreted as some additional dissipation 
related with the speed of \UUU the \EEE damaging process contributing
to the heat production, \UUU possibly leading to an increase of
temperature. \EEE  The \GGG microscopical \EEE
idea behind it is that faster mechanical processes cause \UUU higher
heat production and therefore higher dissipation.  \EEE
\end{remark}

\begin{remark}[Phase-field fracture]
Our cohesive damage with $\bbC'(0)=0$ has the drawback that,
while $\alpha$ approaches zero, the driving force needed for its evolution
rises to infinity. \UUU This model is anyhow \EEE used in the phase-field 
approximation of fracture. 
\begin{align}
&\!\!\!\!\!\!
\mathscr{E}(u,\alpha):=\int_\Omega
%\varphi_\text{\sc e}(e,\alpha,\nabla\alpha):=
\frac{
%\eps^2
(\varepsilon/\varepsilon_0)^2{+}\alpha^2}2\bbC_1 e(u){:}e(u)+
\!\!\lineunder{
G_{\rm c}\Big(\frac{1}{2\varepsilon}(1{-}\alpha)^2\!
+\frac\varepsilon2|\nabla\alpha|^2\Big)}{crack surface density}\!\,\d x
%+\int_\Gamma g{\cdot}
\label{eq6:AM-engr+}
\end{align}
with with $G_{\rm c}$ denoting the energy of fracture
and with $\varepsilon$ controlling a ``characteristic'' width of the 
phase-field fracture zone(s). The physical dimension of $\varepsilon_0$ 
as well as of $\varepsilon$ is m (meters) while the physical dimension 
of $G_{\rm c}$ is J/m$^2$. In the model \eq{def-of-E}, 
it means $\bbC(\alpha)=(\varepsilon^2/\varepsilon_0^2{+}\alpha^2)\bbC_1$
and $\phi(\alpha)=-G_{\rm c}(1{-}\alpha)^2/(2\varepsilon)$ while $\kappa=\varepsilon$
and $p=2$. This is known as the so-called \emph{Ambrosio-Tortorelli 
functional}\index{Ambrosio-Tortorelli functional}.
%; in fact, this fits the previous model \eq{damage-gradient} with 
%$\fDAM(\alpha)=-\Frakg_{\rm c}(1{-}\alpha)^2/(2\eps)$ and $\kappa
%=\eps \Frakg_{\rm c}$.
% and $p=2$.
Its motivation came from the static case, where 
this approximation was proposed by Ambrosio and Tortorelli 
\cite{AmbTor90AFDJ,AmbTor90AFDP} and the asymptotic 
analysis for $\varepsilon\to0$ was rigorously \UUU proved \EEE
first for the scalar-valued case.
%, inspired by a now classical example in phase transition \cite{ModMor77LGFF}.
The generalization for the vectorial case is due to Focardi \cite{Foca01VAFD}.
%[MM77a, MM77b, Alb00] and extended e.g.\ in [Bou98, Cha04, Cha05, Gia05].
Later, it was extended \UUU to the \EEE evolution situation, namely for 
a rate-independent cohesive damage, in \cite{Giac05ATAQ}, see 
also \cite{BoFrMa08VAF,BoLaRi11TDMD,caponi2018,LaOrSu10ESRM,MieRou15RIST}
%\cite{BoLaRi11TDMD,Lars10MDFB,LaOrSu10ESRM,MieRou15RIST}
where \GGG  inertial forces are incorporated in the description. \EEE \UUU Note however
that \EEE plasticity was not involved in all these references.
Some \GGG modifications have \EEE been \UUU addressed \EEE in \cite{BMMS14MPCC}, see
also \cite{Roub??MDDP} for various other \GGG models, and \cite{conti.focardi.iurlano2016,focardi.iurlano2014,freddi.iurlano2017,iurlano2014} for the linearized and cohesive-fracture settings.\EEE
\end{remark}

\begin{remark}[Ductile damage/fracture]
A combination of damage/fracture with plasticity is sometimes 
denoted by the adjective ``ductile'', in contrast to ``brittle'',
if plasticity is not considered. There are various scenarios 
of combination of plastification processes with damage, that can
model various phenomena in fracture mechanics. \UUU Here, we address
the case of damage-dependent elastic response and the yield stress. \EEE
\end{remark}

\begin{remark}[Influence of damage on the energy equality]\label{rk:no-en-cons}
We point out that, in the absence of damage, energy conservation \UUU
could be recovered. Indeed, it would be possible to prove the energy
\EEE  equality, \UUU which \EEE would \UUU then ensure the validity of \EEE
the flow rule \eqref{system-p} \UUU as well. \EEE A detailed analysis
of \UUU an analogous albeit quasistationary case \EEE has been performed 
in \cite[Section 6]{dalmaso.desimone.mora06} in the quasistatic framework. An adaptation 
of the argument yields the analogous statements in the dynamic setting.
\end{remark}

%\section{Formal a-priori estimates}\label{sec:formal}
\section{Some formal calculus first}\label{sec:formal}
%        ~~~~~~~~~~~~~~~~~~~~~~~~~~~
%For the sake of lucidity, 
We first highlight a formal strategy 
that will lead to the proof of Theorem \ref{thm:main}, avoiding (later necessary) 
technicalities. In particular, we first derive the energetics of the model 
by performing some standard tests of \eqref{system}
%\eqref{system-u-new}, \eqref{system-p} and \eqref{system-alpha} 
against the time derivatives $(\DT u,\DT \pi,\DT \alpha)$.
%displacement, plastic strain, and damage variable, respectively. 
Further a-priori estimates will be obtained by performing a test of the same equations 
against higher-order time-derivatives of the maps. \UUU Eventually, a
direct strong-convergence argument will be presented. \EEE  

All the arguments will be eventually made rigorous in %Section \ref{sec:time-discrete} 
Sections~\ref{sec-apriori}--\ref{sec:proof-main}
by means of a time-discretization procedure, combined with a passage to the limit as 
the time-step vanishes. The estimates described in Subsections~\ref{subs:higher-order}--\ref{subs:conv} will 
be essential to pass to the limit in the time-discrete damage equation.

\subsection{Energetics of the model and first estimates}
%           ~~~~~~~~~~~~~~~~~~~~~~~
\label{subs:energetics}
A formal test of the equations/inclusion \eqref{system} successively 
%\eqref{system-u-new}, \eqref{system-p} and \eqref{system-alpha} 
against $\DT{u}$, $\DT{\pi}$, and $\DT{\alpha}$
%, respectively, 
yields
\begin{subequations}\begin{align}
&\label{eq:1-formal} \int_{\Omega} \GGG\Big(\EEE \rho\DDT{u}(t)\cdot \DT{u}(t)
%\,\d x
+
%\int_{\Omega}
\sigma(t): e(\DT{u}(t))\GGG\Big)\EEE\,\d x=\int_{\Omega} f(t)\cdot \DT{u}(t)\,\d x
+\int_{\Gamma}\sigma(t) \nu_\Gamma\cdot\GGG\DT{u}_{\text{\sc d}}(t)\EEE\,\d\HH^{d-1},
\\&\label{eq:2-formal}
\int_{\Omega}{\rm dev}\,\sigma(t):\DT{\pi}(t)\,\d x=\int_{\Omega}%R(\alpha(t),\DT{\pi}(t))
\SYLD(\alpha(t))|\DT\pi(t)|\,\d x,
\\\nonumber
&\int_{\Omega} \eta \DT{\alpha}(t)^2\,\d x=\int_{\Omega} \GGG\Big(\EEE\phi'(\alpha(t))\DT{\alpha}(t)
%\,\d x
\\[-.5em]
\label{eq:3-formal}
&\qquad\qquad\qquad\qquad
-\frac12 
%\int_{\Omega} 
\C'(\alpha(t))\DT{\alpha}(t) e_{\rm el}(t):e_{\rm el}(t)
%\,\d x
-%\int_{\Omega}
\UUU \kappa \EEE |\nabla \alpha(t)|^{p-2}\nabla \alpha(t)\cdot \nabla \DT{\alpha}(t)\GGG\Big)\EEE\,\d x.
\end{align}\end{subequations}
Integrating \eqref{eq:1-formal} in time, by \eqref{initial conditions}, \eqref{eq:2-formal}, 
and by the definition of $e_{\rm el}$, we obtain
\begin{align}\nonumber 
&\int_{\Omega}\GGG\Big(\EEE \frac\rho2|\DT{u}(t)|^2
%\,\d x
+
%\int_{\Omega}
\frac{1}{2}\C(\alpha(t)) e_{\rm el}(t): e_{\rm el}(t)\GGG\Big)\EEE\,\d x
-\int_0^t\!\!\int_{\Omega}\frac12\C'(\alpha) \DT{\alpha} e_{\rm el}:e_{\rm el}\,\dx\,\ds\\
&\nonumber \qquad 
+\int_0^t\!\!\int_{\Omega}\D(\alpha)\DT{e}_{\rm el}:\DT{e}_{\rm el}\,\d x\,\d s
+\int_0^t\!\!\int_{\Omega}%R(\alpha,\DT{\pi})
\SYLD(\alpha)|\DT{\pi}|\,\d x\,\d s\\
%\nonumber
&\quad= \int_{\Omega}\GGG\Big(\EEE \frac{\rho}{2}|v_0|^2
%\,\d x
+\frac12
%\int_{\Omega}
\C(\alpha_0)e_{\rm el}(0):e_{\rm el}(0)\GGG\Big)\EEE\,\dx
%\\& \qquad\qquad
%\nonumber\\&\qquad
+\int_0^t\!\!\int_{\Omega} f\cdot \DT{u}\,\d x\,\d s
+\int_0^{t}\!\!\int_{\Gamma_\text{\sc D}}\!\!\sigma\nu_\Gamma^{}\cdot \GGG\DT{u}_{\text{\sc d}}\EEE\,\d\HH^{d-1}\,\d s.
\label{eq:1-formal-int}
\end{align}
In view of \eqref{bc} and \eqref{initial conditions}, an integration in time 
of \eqref{eq:3-formal} yields
\begin{align}%\nonumber
\int_0^{t}\!\int_{\Omega}\GGG\Big(\EEE\!\eta \DT{\alpha}^2
%\,\d x\,\d s
+\frac12 
%\int_0^t\!\!\int_{\Omega}
\DT{\alpha}\C'(\alpha) e_{\rm el}:e_{\rm el}\GGG\Big)\EEE\,\d x\,\d s
+\int_{\Omega}\GGG\Big(\EEE \frac{\UUU \kappa}{p}|\nabla\alpha(t)|^p
%\,\d x\\\quad
-
%\!\int_{\Omega}\!
\phi(\alpha(t))\GGG\Big)\EEE\,\d x
%\\
=\!\int_{\Omega}\GGG\Big( \frac{\kappa}{p}|\nabla\alpha_0|^p\!\EEE-\phi(\alpha_0)\GGG\Big)\EEE\,\d x.
\label{eq:3-formal-int}\end{align}
Thus, summing \eqref{eq:1-formal-int} and \eqref{eq:3-formal-int}, by \eqref{bc} we deduce the energy equality \eqref{eq:formal-en-equality}.

To see the energy-based estimates from \eqref{eq:formal-en-equality},
here we should use the Gronwall inequality for the term $f\cdot\DT u$ benefitting from
having the kinetic energy on the left-hand side, and the by-part integration
of the Dirichlet loading term. \GGG We stress that the last term in \eqref{eq:1-formal-int} can be rigorously defined as in \eqref{eq:def-bd-term}. \EEE
This way, we can see the estimates
\begin{subequations}\begin{align}\label{basic-est-u}
%\begin{cases}
&u\in L^{\infty}(0,T;BD(\Omega;\R^d))\cap W^{1,\infty}(0,T;L^2(\Omega;\R^d)),&\\
&e_{\rm el}\in H^1(0,T;L^2(\Omega;\mthree_{\rm sym})),&\\
&\pi\in BV(0,T;\mb),
\\&\alpha\in L^\infty(0,T;W^{1,p}(\Omega))\cap H^1(0,T;L^2(\Omega)).
%\end{cases}
\end{align}\end{subequations}
Unfortunately, these estimates do not suffice for the convergence
analysis \GGG as the time step \UUU goes to $0$. \EEE 
In particular, \UUU in relation \eq{strong-conv-e-perfect}  later on
one needs to handle the term  $\rho\DDT u_k\cdot\DT u$, which is still
not integrable under \eqref{basic-est-u}. \EEE

\subsection{Higher-order tests}\label{subs:higher-order}
%           ~~~~~~~~~~~~~~~~~~
In this subsection we perform an extension of the regularity estimate in Subsection \ref{subs:energetics}, relying on the unidirectionality of the damage evolution, on the fact that $\SYLD(\cdot)$ is 
nondecreasing, and on the monotonicity of $\C(\cdot)$ with respect to the L\"owner ordering.
We introduce the abbreviation
\begin{equation}
\label{eq:def-w-test}
w:=u+\taur \DT{u},\quad \varepsilon_{{\rm el}}:=e_{{\rm el}}+\taur\DT{e}_{{\rm el}},
\quad \text{and}\quad\varpi=\pi+\taur\DT\pi,
\end{equation}
%We preliminary
and observe that, $\DDT u=(\DT w{-}\DT u)/{\taur}.$
Hence, the equilibrium equation rewrites as
\begin{equation}
\label{eq:new-eq}
\rho \frac{\DT w}{\taur}-{\rm div }\,\sigma=f+\rho\frac{\DT u}{\taur}.
\ee

We first argue by testing the plastic flow rule \eqref{system-p} against 
%$\DT\pi+\taur\DDT\pi$
$\DT\varpi$. We use the (here formal) calculus 
\begin{align}\label{formal-calculus}
\SYLD(\alpha){\rm Dir}(\DT \pi)\Colon\DDT \pi
=\frac{\pl}{\pl t}\Big(\SYLD(\alpha)|\DT \pi|\Big)-\DT\alpha\SYLD'(\alpha)
|\DT \pi|\ge\frac{\pl}{\pl t}\Big(\SYLD(\alpha)|\DT\pi|\Big)
\end{align}
because $\DT\alpha\SYLD'(\alpha)|\DT\pi|\le0$ when assuming 
$\SYLD(\cdot)$ nondecreasing and using $\DT\alpha\le0$, cf.\ \eq{eq:monoton2}.
%together with \eqref{eq:monoton2}. 
This formally yields
\begin{align}\nonumber
&\int_0^T\!\!\int_{\Omega}
%R(\alpha,\DT{\pi})
\SYLD(\alpha)|\DT{\pi}|\,\dx\,\d t
+\taur\int_{\Omega} \GGG\Big(\EEE
%R(\alpha(t),\DT{\pi}(t))
\SYLD(\alpha(t))|\DT\pi(t)|
%\,\d x
-
\taur%\int_{\Omega}R(\alpha_0,\DT{\pi}(0))
\SYLD(\alpha_0)|\DT{\pi}(0)|\GGG\Big)\EEE\,\d x
\\& \nonumber\qquad
=\int_0^T\!\!\int_{\Omega}\SYLD(\alpha)|\DT \pi|\,\dx\,\d t
%\\&\quad
%\nonumber
+\taur\int_{\Omega} \SYLD(\alpha(t))|\DT \pi(t)|\,\d x-\taur \int_{\Omega}\SYLD(\alpha(0))|\DT \pi(0)|\,\d x
\\& \qquad\qquad\qquad\qquad\qquad\qquad\qquad\qquad\qquad\qquad\qquad\qquad\qquad
\leq \int_0^T\!\!\int_{\Omega}\sigma\Colon
%(\DT \pi+\taur\DDT \pi)
\DT\varpi\,\dx\,\d t.
\label{estimate-p}
\end{align}

%Analogously, we proceed by testing the equilibrium equation with 
\GGG Analogously, \EEE testing \eqref{eq:new-eq} against $\DT{w}$ and integrating in time, 
by \eqref{bc} we deduce
\begin{align}
&
%\nonumber
\int_0^T\!\!\int_{\Omega}\GGG\Big(\EEE \frac{\rho}{\taur}|\DT{w}|^2%\,\d x\,\d t
+
%\int_0^\COL{T}\!\!\int_{\Omega}
\sigma:e(\DT{w})\GGG\Big)\EEE\,\d x\d t=\int_0^T\!\!\int_{\Omega}\GGG\Big(\EEE f\cdot\DT{w}
%\dx\d t
+
%\int_0^t\!\!\int_{\Omega}
\frac{\rho}{\taur}\DT{u}\cdot\DT{w}\GGG\Big)\EEE\,\d x\d t
%\\&\quad
+\int_0^T\!\!\int_{\Gamma_{\text{\sc d}}}\!\!\sigma\nu_\Gamma^{}\cdot \GGG(\DT{u}_{\text{\sc d}}
+\chi \DDT{u}_{\text{\sc d}})\EEE\,\d \HH^{d-1}\GGG \d t\EEE.\label{eq:ho1}
\end{align}
By the definition of the tensor $\D$ (see Subsection \ref{subs:el-visc}), and 
by \eqref{eq:monoton1}, we infer that
\begin{align}\nonumber
&\int_0^T\!\!\int_{\Omega}\sigma:e(\DT{w})\,\d x\,\d t
=\int_0^T\!\!\int_{\Omega}\GGG\Big(\EEE\C(\alpha) \varepsilon_{{\rm el}}:\DT\varepsilon_{{\rm el}}
%\,\d x\,\d t\\&\nonumber\qquad
+
%\int_0^\COL{T}\!\!\int_{\Omega}
\D_0\DT{e}_{{\rm el}}: \DT\varepsilon_{{\rm el}}
%\,\d x\,\d t
+
%\int_0^\COL{T}\!\!\int_{\Omega}
\sigma:\DT{\varpi}\GGG\Big)\EEE\,\d x\,\d t\\
&\nonumber\quad\geq  \int_{\Omega}\frac12\C(\alpha(t))\varepsilon_{{\rm el}}(t):\varepsilon_{{\rm el}}(t)\,\dx+\int_0^T\int_{\Omega}\D_0\DT{e}_{{\rm el}}: \DT{e}_{{\rm el}}\,\dx \,\d t
-\int_{\Omega}\frac12\C(\alpha_0)\varepsilon_{{\rm el}}(0):\varepsilon_{{\rm el}}(0)\,\dx
\\&\qquad
+\frac{\taur}{2} \int_{\Omega}\D_0\DT{e}_{{\rm el}}(t): \DT{e}_{{\rm el}}(t)\,\dx
%\\&\qquad 
-\frac{\taur}{2} \int_{\Omega}\D_0\DT{e}_{{\rm el}}(0): \DT{e}_{{\rm el}}(0)\,\dx+\int_0^T \int_{\Omega}\sigma:\DT{\varpi}\,\d x\,\d t.
\label{eq:ho2}\end{align}
Thus, by combining \eqref{estimate-p}, with \eqref{eq:ho1} and \eqref{eq:ho2}, we obtain 
the inequality
\begin{align*}
&\frac{1}{\taur}\int_0^T\!\!\int_{\Omega}\rho|\DT{w}|^2\,\d x\,\d t+\frac12 \int_{\Omega}\C(\alpha(t))\varepsilon_{{\rm el}}(t):\varepsilon_{{\rm el}}(t)\,\dx\\
&\qquad+\int_0^T \int_{\Omega}\D_0\DT{e}_{{\rm el}}: \DT{e}_{{\rm el}}\,\dx \,\d t
+\frac{\taur}{2} \int_{\Omega}\D_0\DT{e}_{{\rm el}}(t): \DT{e}_{{\rm el}}(t)\,\dx+\int_0^T\!\!\int_{\Omega}%R(\alpha,\DT{\pi})
\SYLD(\alpha)|\DT\pi|
\,\dx\,\d t\\
&\qquad +\taur\int_{\Omega} 
%R(\alpha(t),\DT{\pi}(t))
\SYLD(\alpha(t))|\DT\pi(t)|
\,\d x\leq \frac12 \int_{\Omega}\C(\alpha_0)\varepsilon_{{\rm el}}(0):\varepsilon_{{\rm el}}(0)\,\dx\\
&\qquad+\frac{\taur}{2} \int_{\Omega}\D_0\DT{e}_{{\rm el}}(0): \DT{e}_{{\rm el}}(0)\,\dx+\taur\int_{\Omega} 
%R(\alpha_0,\DT{\pi}(0))
\SYLD(\alpha_0)|\DT\pi(0)|\,\d x\\
&\qquad +\int_0^T\!\!\int_{\Omega}f\cdot\DT{w}\,\dx\d t+\int_0^T\!\!\int_{\Gamma_{\text{\sc d}}}\!\!\sigma\nu_\Gamma^{}\cdot \GGG(\DT{u}_{\text{\sc d}}
+\chi \DDT{u}_{\text{\sc d}})\EEE\,\d \HH^{d-1}\GGG \d t\EEE+\frac{1}{\taur}\int_0^T\!\!\int_{\Omega}\rho\DT{u}\cdot\DT{w}\,\dx\d t.
\end{align*}
 Let us note that \UUU we can use \eq{basic-est-u} in order to control
 $\DT u$ in the last term above. As for initial data, we need here
 that \EEE $\DT{e}_{{\rm el}}(0)\in L^2(\Omega;\mthree_{\rm sym})$ and 
$\DT{\pi}(0)\in L^2(\Omega;\mthree_{\rm dev})$, \UUU which follows
under the provisions of \eq{eq:hp}. \EEE
% . We qualify the initial data additionally as 
% \eq{eq:hp-IC+},
% %$e(v_0)\In L^2(\Omega;\mthree_{\rm sym})$,
% which allows us to see $\DT\pi(0)=:\DT\pi_0\in L^2(\Omega;\mthree_{\rm dev})$.
% Indeed, writing the 
% plastic flow rule at $t=0$, we have $\SYLD(\alpha_0){\rm Dir}(\DT\pi_0)+
% %(\bbD(\alpha_0){+}\bbK)
% \bbD(\alpha_0)\DT\pi_0\ni 
% %(\bbD(\alpha_0){+}\bbK)
% \bbD(\alpha_0){\rm dev}\,e(v_0)+\mathrm{dev}(\bbC(\alpha_0)e_{{\rm el},0})\in L^2(\Omega;\mthree_{\rm dev})$
% provided \eq{eq:hp-in-data+} and \eq{eq:hp-IC+} are assumed. Here we used that 
% ${\rm dev}\bbD(\alpha)\DT e_{\rm el}=\bbD(\alpha){\rm dev}\,e(u)-\pi$ in isotropic matrials.
% From this, we obtain the desired
% estimate $\DT\pi_0\in L^2(\Omega;\mthree_{\rm dev})$.
% Then also $\DT{e}_{{\rm el}}(0)=e(v_0)-\DT\pi_0\in L^2(\Omega;\mthree_{\rm sym})$.
Eventually, by \eqref{eq:formal-en-equality}, and \eqref{eq:def-w-test} this yields the following 
additional regularity for the displacement, and for the elastic and plastic strains
\begin{subequations}\begin{align}
%\begin{cases}
&u\in W^{1,\infty}(0,T;BD(\Omega;\R^d))\cap H^2(0,T;L^2(\Omega;\R^d)),&\\
&e_{\rm el}\in W^{1,\infty}(0,T;L^2(\Omega;\mathbb{M}^{d\times d})),&\\
&\pi\in W^{1,\infty}(0,T;\mb).
%\end{cases}
\end{align}\end{subequations}

%\hrule 
\subsection{One more estimate for \UUU the \EEE strong convergence of $e_{\rm el}$'s}\label{subs:conv}
%           ~~~~~~~~~~~~~~~~~~~~~~~~~~~~~~~~~~~~~~~~~~~~~~~~~~~~~~~~
\UUU The \EEE 
strong convergence of the elastic strains $e_{\rm el}$ \UUU is \EEE needed 
for the limit passage in the damage flow rule. \UUU The failure of
\EEE energy conservation \GGG(see Remark \ref{rk:no-en-cons}) \EEE prevents the usual ``limsup-strategy'', but one can 
estimate directly the difference
between the (presently \UUU still \EEE unspecified) approximate solution $(u_k,\pi_k)$ 
and its limit $(u,\pi)$ \GGG punctually \EEE as:
\begin{align}\nonumber
&\int_Q\bbD(\alpha_k)(\DT e_{{\rm el},k}{-}\DT e_{\rm el})\Colon(\DT e_{{\rm el},k}{-}\DT e_{\rm el})\,\d x\d t
\\[-.5em]&\nonumber\qquad\qquad\qquad
+\int_\Omega\frac12\bbC(\alpha_k(T))(e_{{\rm el},k}(T){-}e_{\rm el}(T))
\Colon(e_{{\rm el},k}(T){-}e_{\rm el}(T))\,\d x
\\&\nonumber
\le\int_Q\int_\Omega\big(\bbD(\alpha_k)(\DT e_{{\rm el},k}{-}\DT e_{\rm el})
+\bbC(\alpha_k)(e_{{\rm el},k}{-}e_{\rm el})\big)\Colon(\DT e_{{\rm el},k}{-}\DT e_{\rm el})\,\d x\d t
\\&\nonumber\le\int_Q\bigg((f{-}\rho\DDT u_k)\Cdot(\DT u_k{-}\DT u)
-\big(\bbD(\alpha_k)\DT e_{\rm el}+\bbC(\alpha_k)e_{\rm el}\big)\Colon(\DT e_{{\rm el},k}{-}\DT e_{\rm el})
\\[-.5em]&\qquad\qquad\qquad
+\SYLD(\alpha_k)(|\DT\pi|-|\DT\pi_k|)\bigg)\,\d x\d t.
%+\int_\Sigma g\Cdot(\DT u_k{-}\DT u)\,\d S\d t\to0,
\label{strong-conv-e-perfect}
\end{align}
The first inequality in 
\eq{strong-conv-e-perfect} is due to \UUU the \EEE monotonicity of $\bbC(\cdot)$ with 
respect to the L\"owner ordering so that, due to  \eq{eq:monoton1}, it holds
\begin{align}\nonumber
&\frac{\pl}{\pl t}\left(
\frac12\bbC(\alpha_k)(e_{{\rm el},k}{-}e_{\rm el}){:}(e_{{\rm el},k}{-}e_{\rm el})
\right)\nonumber\\
& \UUU = \frac12\DT\alpha_k\bbC'(\alpha_k)(e_{{\rm el},k}{-}e_{\rm
  el}){:}(e_{{\rm el},k}{-}e_{\rm el}) +
 \bbC(\alpha_k) \frac{\pl}{\pl t}\GGG\left( \frac12(e_{{\rm
      el},k}{-}e_{\rm el}){:}(e_{{\rm
      el},k}{-}e_{\rm el})\right)  \EEE \nonumber\\
&\UUU \leq  \EEE \bbC(\alpha_k)(e_{{\rm el},k}{-}e_{\rm el}){:}(\DT e_{{\rm el},k}{-}\DT e_{\rm el})\,.
\end{align}
while the second \GGG step \EEE in \eq{strong-conv-e-perfect} is due to 
the inequality
${\rm dev}\,\sigma\Colon(\DT\pi_k-\DT\pi)\ge\SYLD(\alpha_k)(|\DT\pi_k|
-|\DT\pi|)$, \UUU following \EEE from the plastic flow rule 
$\SYLD(\alpha_k){\rm Dir}(\DT\pi_k)\ni{\rm dev}\,\sigma$, with $\sigma$ from 
\eq{system-u-new}. 

\UUU By using weak* upper semicontinuity and the uniform convergence  $\alpha_k\to\alpha$ one checks  that the limit
superior of the right-hand side in \eq{strong-conv-e-perfect} can be \EEE estimated from above by zero 
(so that, in fact, the limit does exist and equals to zero). \GGG We refer to Proposition \ref{prop:strong-convergence} for the rigorous implementation of the above argument. \EEE

\section{Staggered two-step time-discretization scheme}\label{sec:time-discrete}
%        ~~~~~~~~~~~~~~~~~~~~~~~~~~~~~~~~~~~~~~~~~~~~
This section is devoted to the \UUU solution of \EEE a discrete counterpart 
of the system of equations \eqref{system}--\eqref{initial conditions}, and to the 
proof of a-priori estimates for the associated piecewise constant, piecewise affine, 
and piecewise quadratic in-time interpolants.

Fix $n\in\N$, set $\tau:=T/n$, and consider the equidistant time partition of the interval 
$[0,T]$ with step $\tau$. We define the discrete body-forces by setting 
$f_{\tau}^k:=\int_{(k-1)\tau}^{k\tau}f(t)\,\d t$ for all
$k\in\{1,\dots,{T}/{\tau}\}$. \UUU We consider \EEE the following 
time-discretization scheme: 
\begin{subequations}\label{eq:form-disc}
\begin{align}
&\label{eq:form-disc-equilibrium}\rho \delta^2 u_{\tau}^k-{\rm div }\big(\C(\alpha_{\tau}^{k-1})e_{{\rm el},\tau}^k+\D(\alpha_{\tau}^{k-1})\delta e_{{\rm el},\tau}^k\big)=f_{\tau}^k,\\
&\label{eq:form-disc-flow-rule}
%\partial R(\alpha_{\tau}^{k-1},\delta \pi_{\tau}^k)
\SYLD(\alpha_{\tau}^{k-1}){\rm Dir}(\delta \pi_{\tau}^k)
\ni{\rm dev}\big(\C(\alpha_{\tau}^{k-1})e_{{\rm el},\tau}^k+\D(\alpha_{\tau}^{k-1})\delta e_{{\rm el},\tau}^k\big),\\
&\label{eq:form-disc-damage}\partial 
%\mathbb{I}_{(-\infty,0]}(\delta \alpha_{\tau}^k)+\eta 
\zeta(\delta\alpha_{\tau}^k)+\frac12 \C^\circ(\alpha_{\tau}^k,\alpha_{\tau}^{k-1})e_{{\rm el},\tau}^{k}:e_{{\rm el},\tau}^{k}\ni 
%\phi'(\alpha_{\tau}^k)
\phi^\circ(\alpha_{\tau}^k,\alpha_{\tau}^{k-1})
+\UUU {\rm div} \, (\kappa|\nabla \alpha_{\tau}^k |^{p-2}\nabla \alpha_{\tau}^k ),
\end{align}\end{subequations}
\UUU to be complemented with the boundary conditions 
\begin{subequations}\label{eq:bc-disc}
  \begin{align}
   &\left( \C(\alpha_\tau^{k-1})  e_{{\rm el},\tau}^k + \D(\alpha_\tau^{k-1}) \delta e_{{\rm el},\tau}^k\right) \nu_\Gamma = 0 \quad \text{on} \ \
   \Gamma_{{\text{\sc n}}}  \label{eq:bc-disc1}\\
   & \kappa|\nabla \alpha_{\tau}^k |^{p-2}\nabla \alpha_{\tau}^k \cdot
   \nu_\Gamma =0 \quad \text{on} \ \
   \Gamma .\label{eq:bc-disc2}
  \end{align}
\end{subequations}
\EEE
%\COMMENT{WHY NOT $\phi^\circ(\alpha_{\tau}^k\alpha_{\tau}^{k-1})$????}
\UUU Here, \EEE $\delta$ and $\delta^2$ denote the first and second order 
finite-difference operator, that is 
$$
\delta u_{\tau}^k:=\frac{u_{\tau}^k-u_{\tau}^{k-1}}{\tau}
\quad\text{and}\quad
\delta^2 u_{\tau}^k:=\delta\big[\delta u_{\tau}^k\big]=
\frac{u_{\tau}^k-2u_{\tau}^{k-1}+u_{\tau}^{k-2}}{\tau^2},
$$
and where the tensor $\C^\circ%(\alpha_{\tau}^k,\alpha_{\tau}^{k-1})
(\alpha,\tilde\alpha)$ \UUU and the scalar $\phi^\circ(\alpha ,\tilde
\alpha)$ are defined for all $\alpha, \, \tilde \alpha \in \R$ \EEE  as
\begin{align*}
&\C^\circ%(\alpha_{\tau}^k,\alpha_{\tau}^{k-1})(x)
(\alpha,\tilde\alpha)
:=\begin{cases}
\displaystyle{\frac{\C(\alpha){-}\C(\tilde\alpha)}{\alpha-\tilde\alpha}}
%\delta\C(\alpha_{\tau}^{k}(x))}{\delta \alpha_{\tau}^k(x)}}
&\UUU \text{if }\ \alpha\not = \tilde \alpha %\delta\alpha_{\tau}^k(x)\neq 0\alpha\ne\tilde\alpha\,,
\\
%\C'(\alpha_{\tau}^{k}(x))
\C'(\alpha)=\C'(\tilde\alpha)
&\UUU \text{if }\ \alpha=\tilde\alpha,
%\delta\alpha_{\tau}^k(x)=0.
\end{cases}\\
& 
\phi^\circ%(\alpha_{\tau}^k,\alpha_{\tau}^{k-1})(x)
(\alpha,\tilde\alpha)
:=\begin{cases}
\displaystyle{\frac{\phi(\alpha){-}\phi(\tilde\alpha)}{\alpha-\tilde\alpha}}
%\delta\C(\alpha_{\tau}^{k}(x))}{\delta \alpha_{\tau}^k(x)}}
&\text{if }\ %\delta\alpha_{\tau}^k(x)\neq 0
\alpha\ne\tilde\alpha\,,\\
%\C'(\alpha_{\tau}^{k}(x))
\phi'(\alpha)=\phi'(\tilde\alpha)
&\text{if }\ \alpha=\tilde\alpha\,.
%\delta\alpha_{\tau}^k(x)=0.
\end{cases}
\end{align*}
%for every $x\in \Omega$.
Let us note that, if  $\phi(\cdot)$ is affine, then simply 
 $\phi^\circ(\alpha_{\tau}^k,\alpha_{\tau}^{k-1})=\phi'$.
Similarly, if $\C(\cdot)$ were affine, then $\C^\circ(\alpha_{\tau}^k,\alpha_{\tau}^{k-1})=\C'$. \GGG We point out that \EEE this case would be in conflict with \eq{eq:hp-cohesive} unless $\bbC$ would
be independent of damage.

% \COMMENT{$\phi^\circ$ WAS NEWLY USED OTHERWISE
% $\phi(\alpha^{k})-\phi(\alpha^{k-1})\ge\phi'(\alpha^{k})\delta\alpha^{k}$ for $\phi$ concave NEEDED!}

% \COMMENT{SOMEWHERE: replicate \eq{formal-calculus} for the discrete scheme} 
% \begin{align*}
% \SYLD(\alpha_\tau^{k-1}){\rm Dir}(\delta\pi_\tau^k){:}(\delta\pi_\tau^k{-}\delta\pi_\tau^{k-1})
% &\ge\SYLD(\alpha_\tau^{k-1})(|\delta\pi_\tau^k|-|\delta\pi_\tau^{k-1}|)
% \\&\ge\SYLD(\alpha_\tau^{k-1})|\delta\pi_\tau^k|-\SYLD(\alpha_\tau^{k-2})|\delta\pi_\tau^{k-1}|
% =\delta\big[\SYLD(\alpha_\tau^{k-1})|\delta\pi_\tau^k|\big]
% \end{align*} 
% \COMMENT{when using subsequently 
% the convexity of $|\cdot|$ and the monotonicty of $\SYLD(\cdot)$ together with the 
% inequality $\alpha_\tau^{k-1}\le\alpha_\tau^{k-2}$ -- MAYBE IT IS ALEADY SOMEWHERE, BUT I DID NOT
% SEE IT -- it also indicates that $\alpha_\tau^{-1}$ will indeed be needed}

\subsection{Weak solutions to the time-discretization scheme}
\label{subs:weak}
%           ~~~~~~~~~~~~~~~~~~~~~~~~~~~~~~~~~~~~~~~~~~~~~~~~
In order to define a notion of weak solutions to \eqref{eq:form-disc-flow-rule},
we need to preliminary introduce a duality between stresses and plastic 
strains. We work along the footsteps of \cite{kohn.temam} and 
\cite[Subsection 2.3]{dalmaso.desimone.mora06}. We first define the set
\be{eq:def-sigma-omega}
\Sigma(\Omega):=\big\{\sigma\in L^2(\Omega;\mthree_{\rm sym}):\ 
{\rm dev}\,\sigma\in L^{\infty}(\Omega;\mthree_{\rm dev})\ \text{ and }\ 
{\rm div}\,\sigma\in L^2(\Omega;\R^d)\big\}.
\ee
By \cite[Proposition 2.5 and Corollary 2.6]{kohn.temam}, for every 
$\sigma\in\Sigma(\Omega)$ there holds
$$
\sigma\in L^{\infty}(\Omega;\mthree_{\rm sym}).
$$
In addition, we can introduce the trace 
$[\sigma\nu_\Gamma^{}]\in H^{-1/2}(\Gamma;\R^d)$ (see e.g.\ 
\cite[Theorem 1.2, Chapter I]{temam}) as
\begin{equation}\label{eq:def-bd-term}
\scal{[\sigma\nu_\Gamma^{}]}{\psi}_{\Gamma}:=\int_{\Omega}{\rm div}\,\sigma\cdot\psi\,\dx+\int_{\Omega}\sigma:e(\psi)\,\dx
\end{equation}
for every $\psi\in H^1(\Omega;\R^d)$. Defining the normal and the tangential part of $[\sigma \nu_\Gamma^{}]$ as
$$
[\sigma \nu_\Gamma^{}]_{\nu}:=([\sigma\nu_\Gamma^{}]\cdot\nu_\Gamma^{})\nu_\Gamma^{}\quad\text{and} \quad [\sigma \nu_\Gamma^{}]_{\nu}^{\perp}:=[\sigma\nu_\Gamma^{}]-([\sigma\nu_\Gamma^{}]\cdot\nu_\Gamma^{})\nu_\Gamma^{},
$$
by \cite[Lemma 2.4]{kohn.temam} we have that $[\sigma \nu_\Gamma^{}]_{\nu}^{\perp}\in L^{\infty}(\Gamma;\R^d)$, and
$$
\|[\sigma \nu_\Gamma^{}]_{\nu}^{\perp}\|_{L^{\infty}(\Gamma;\R^d)}\leq \frac{1}{\sqrt{2}}\|{\rm dev}\,\sigma\|_{L^{\infty}(\Omega;\mthree_{\rm dev})}.
$$
Let $\sigma\in \Sigma(\Omega)$ and let $u\in BD(\Omega;\R^d)\cap L^2(\Omega;\R^d)$, with ${\rm div }\,u\in L^{2}(\Omega)$. 
We define the distribution $[{\rm dev}\,\sigma:{\rm dev}\,e(u)]$ on $\Omega$ as
\be{eq:def-meas-ed}\scal{[{\rm dev}\,\sigma:{\rm dev}\,e(u)]}{\varphi}:=-\int_{\Omega}\varphi\,{\rm
  div}\,\sigma \cdot  u\,\dx-\frac1d\int_{\Omega}\varphi\, {\rm  tr
}\,\sigma \cdot{\rm div}\,u\,\dx-\int_{\Omega}\sigma: (u\odot\nabla\varphi)\,\dx\ee
for every $\varphi\in C^{\infty}_c(\Omega)$. By \cite[Theorem 3.2]{kohn.temam} it follows that $[{\rm dev}\,\sigma:{\rm dev}\,e(u)]$ is a bounded Radon measure on $\Omega$, whose variation satisfies
$$
|[{\rm dev}\,\sigma:{\rm dev}\,e(u)]|\leq \|{\rm dev}\,\sigma\|_{L^{\infty}(\Omega;\mthree_{\rm dev})}|{\rm dev}\,e(u)|\quad\text{in }\Omega.
$$

Let $\Pi_{\Gamma_{\text{\sc d}}}(\Omega)$ be the set of admissible plastic strains, namely the set of maps $\pi\in\mb$ such that there exist $u\in BD(\Omega;\R^d)\cap L^2(\Omega;\R^d)$, $e\in L^2(\Omega;\mthree_{\rm sym})$, and $w\in W^{1,2}(\Omega;\R^d)$ with $(u,e,\pi)\in\pdeor(w)$. Note that the additive decomposition $e(u)=e+\pi$ implies that ${\rm div }\,u\in L^2(\Omega)$. 

It is possible to define a duality between elements of $\Sigma
(\Omega)$ and $\Pi_{\Gamma_{\text{\sc d}}}(\Omega)$. To be precise,
given $\pi\in \Pi_{\Gamma_{\text{\sc d}}}(\Omega)$, and $\sigma\in
\Sigma(\Omega)$, we fix $(u,e,w)$ such that $(u,e,\pi)\in\pdeor(w)$,
with $u\in L^2(\Omega;\R^d)$, and we define the measure $[{\rm
  dev}\,\sigma:\UUU \pi \EEE]\in\mb$ as
$$[{\rm dev}\,\sigma:\pi]:=\begin{cases}
[{\rm dev}\,\sigma:{\rm dev}\,e(u)]-{\rm dev}\,\sigma:\,{\rm dev}\,e&\text{in }\Omega\\
[\sigma\nu_\Gamma^{}]_{\nu}^{\perp}\cdot(w-u)\HH^{d-1}&\text{on }\Gamma_{\text{\sc d}},
\end{cases}
$$
so that
$$
\int_{\Omega\cup\Gamma_{\text{\sc d}}}\varphi\,\d[{\rm dev}\,\sigma:\pi]=\int_{\Omega}\varphi\,\d[{\rm dev}\,\sigma:{\rm dev}\,e(u)]
-\int_{\Omega}\varphi\,{\rm dev}\,\sigma:\,{\rm dev}\,e\,\dx+\int_{\Gamma_{\text{\sc d}}}\varphi[\sigma\nu]_{\nu}^{\perp}\cdot(w-u)\,\d\HH^{d-1}
$$
for every $\varphi\in C(\bar{\Omega})$. Arguing as in \cite[Section 2]{dalmaso.desimone.mora06}, one can prove that 
the definition of $[{\rm dev}\,\sigma:\pi]$ is independent of the choice of $(u,e,w)$, and that if 
${\rm dev}\,\sigma\in C(\bar{\Omega};\mthree_{\rm dev})$ and $\varphi\in C(\bar{\Omega})$, then
$$
\int_{\Omega\cup\Gamma_{\text{\sc d}}}\varphi\,\d[{\rm dev}\,\sigma:\pi]=\int_{\Omega\cup\Gamma_{\text{\sc d}}}\varphi\,{\rm dev}\,\sigma:\,\d \pi.
$$

We are now in a position to state the definition of weak solutions to the time-discretization scheme.

\begin{definition}[Weak discrete solutions]
For every $k\in\{1,\dots,{T}/{\tau}\}$, a quadruple $(u^k_{\tau},e^k_{{\rm el},{\tau}},\pi^k_{\tau},\alpha_{\tau}^k)$ is a weak solution to 
%\eqref{eq:form-disc-equilibrium}--\eqref{eq:form-disc-damage}
\eqref{eq:form-disc}
 if $(u_{\tau}^k,e_{\tau}^k,\pi_{\tau}^k)\in \mathscr{A}(u_{\text{\sc
     d},{\tau}}^k)$, $\alpha_{\tau}^k\in W^{1,p}(\Omega)\cap
 L^{\infty}(\Omega)$ satisfies $0\leq \alpha_{\tau}^k\leq 1$, the
 quadruple solves \eqref{eq:form-disc-damage} \GGG and \eqref{eq:bc-disc}, \GGG property \EEE
 \eqref{eq:form-disc-equilibrium} \GGG holds \UUU almost everywhere, \EEE and the following discrete flow-rule is fulfilled
\begin{equation}
\label{eq:discrete-flow-rule}
[{\rm dev}\,\sigma_{\tau}^k:\delta \pi_{\tau}^k](\Omega\cup\Gamma_{\text{\sc d}})=\mathcal{R}(\alpha_\tau^{k-1},\delta \pi_{\tau}^k),\quad\text{with}\quad\sigma_{\tau}^k:=\C(\alpha_{\tau}^{k-1})e_{{\rm el},\tau}^k+\D(\alpha_{\tau}^{k-1})\delta e_{{\rm el},\tau}^k.
\end{equation}
\end{definition}

\begin{remark}[The discrete flow-rule]
A crucial difference with respect to the results in \cite[Proposition 3.3]{babadjian.mora} is the fact 
that condition \eqref{eq:discrete-flow-rule} is much weaker than the
differential inclusion \eqref{eq:form-disc-flow-rule}. This is due to
a key peculiarity \GGG of \EEE our model, for we consider a viscous contribution
involving only the elastic strain, but we still allow for perfect plasticity. In fact, in our setting \eqref{eq:form-disc-flow-rule} is only formal, as for every $\tau$ and $k$, the map $\delta \pi_{\tau}^k$ is a bounded Radon measure. In particular the quantity 
%$\partial R(\alpha_{\tau}^{k-1}, \delta \pi_{\tau}^k)$
$\SYLD(\alpha_{\tau}^{k-1}){\rm Dir}(\delta \pi_{\tau}^k)$ does not have a pointwise meaning. As customary in the setting of perfect plasticity, the discrete flow-rule is thus only recovered in a weaker form.
\end{remark}

\subsection{Existence of weak solutions}
%           ~~~~~~~~~~~~~~~~~~~~~~~~~~~
\UUU Let us start by specifying the discretization of the boundary
Dirichlet data as 
system \EEE 
$$
u_{\text{\sc d},{\tau}}^0:=u_{\text{\sc d}}(0),\quad u_{\text{\sc d},{\tau}}^{-1}:=u_{\text{\sc d}}(0)-\tau \DT{u}_{\text{\sc d}}(0),\quad u^k_{\text{\sc d},{\tau}}:=u_{\GGG\text{\sc d}}(k\tau)\quad\text{for every }k\in\{1,\dots,{T}/{\tau}\}.
$$
\UUU As for initial data, we recall \eqref{eq:hp} and prescribe   
\begin{align*}
u^0_{\tau}:=u_0,  \ \ \pi^0_\tau := \pi_0, \ \  \alpha_{\tau}^0 :=\alpha_0,
\ \ e^0_{\rm el,\tau} = e(u_0)-\pi_0.
\end{align*}
In order to reproduce the higher-order tests of Subsection
\ref{subs:higher-order} at the discrete level we need to specify
additionally the following  
\begin{align*}
u^{-1}_{\tau}:=u_0-\tau v_0, \ \ \GGG \pi^{-1}_\tau :=\pi_0-\tau\UUU \DT \pi_0, \ \
\alpha^{-1}_\tau := \alpha^0_\tau, \ \  e^{-1}_{\rm el,\tau} =\GGG e(u_0)-\tau(
e(v_0)-\DT \pi_0).\UUU
\end{align*} 
In particular, the last condition in \eq{eq:hp-in-data++}
ensures that  the discrete flow rule \eq{eq:form-disc-flow-rule} holds
at level $k=0$ as well. \EEE

\UUU In order to check for the solvability of the discrete system
\eqref{eq:form-disc} we proceed in two steps. For given $\alpha^{k-1}_\tau\in
W^{1,p}(\Omega)\cap L^{\infty}(\Omega)$ with $0\leq
\alpha^{k-1}_\tau\leq 1$ we look for the triple
$(u^k_{\tau},e^k_{{\rm el},{\tau}},\pi^k_{\tau})$ \UUU given by the unique solution to the minimum problem
\begin{align}\nonumber
\min&\Bigg\{
\int_{\Omega}\GGG\Big(\UUU\frac12\C(\alpha_{\tau}^{k-1})e:e
%\,\dx
+\frac{1}{2\tau}
%\int_{\Omega}
\D(\alpha_{\tau}^{k-1})(e-e_{{\rm el},\tau}^{k-1}):(e-e_{{\rm el},\tau}^{k-1})-f^k_{\tau}\cdot u\GGG\Big)\UUU\,\dx
\\
&
+\frac{\rho}{2\tau^2}\|u-2u_{\tau}^{k-1}+u_{\tau}^{k-2}\|^2_{L^2(\Omega;\R^d)}
+\mathcal{R}(\alpha_{\tau}^{k-1},\pi-\pi_{\tau}^{k-1})
%-\int_{\Omega}f^k_{\tau}\cdot u\dx
\ : \ (u,e,\pi)\in \mathscr{A}(u_{\text{\sc d},{\tau}}^k) \Bigg\}.
\label{eq:min-disp-strain}\end{align}
where $\mathscr{A}(\cdot)$ is defined in \eqref{eq:def-A-w}.
The existence and uniqueness of solutions to
\eqref{eq:min-disp-strain} is ensured by \UUU Lemma \ref{lemma1}
below. 

Once $(u^k_{\tau},e^k_{{\rm el},{\tau}},\pi^k_{\tau})$ are found, we
determine $\alpha^k_\tau$ by solving 
 \begin{align}\nonumber
 &\min\Bigg\{\int_{\Omega}\bigg(
\tau\zeta\Big(\frac{\alpha-\alpha_{\tau}^{k-1}}{\tau}\Big)
+\frac{\kappa}{p}|\nabla \alpha|^p
\nonumber\\
&\quad+
   \int_0^{\alpha(x)}\frac12\C^\circ(s,\alpha_{\tau}^{k-1}(x))\,e^k_{{\rm el},\tau}(x):e^k_{{\rm el},\tau}(x)-\phi^\circ(s,\alpha_{\tau}^{k-1}(x))\,\d s\bigg)\,\dx \ : 
\\
 &\label{eq:f-tau-k}\qquad
   \alpha \in W^{1,p}(\Omega), \ 0\leq \alpha \leq 1\Bigg\}
 \end{align}
in Lemma \ref{lemma:damage1}
below.
\EEE

\begin{lemma}[Existence of time-discrete displacements and strains]\label{lemma1}
 Let $\alpha_{\tau}^{k-1}\in W^{1,p}(\Omega)\cap L^{\infty}(\Omega)$,
 \UUU with  $0\leq \alpha_{\tau}^{k-1}\leq 1$,  be given. \EEE Then, there exists a unique triple $(u_{\tau}^k,e_{{\rm el},\tau}^k, \pi_{\tau}^k)\in \pdeor(u^k_{\text{\sc d},\tau})$ solving \eqref{eq:min-disp-strain}.
  \end{lemma}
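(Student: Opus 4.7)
The strategy is to apply the direct method of the calculus of variations to the functional $\mathcal{G}^k_\tau(u,e,\pi)$ appearing in \eqref{eq:min-disp-strain}. The admissible set $\pdeor(u^k_{\text{\sc d},\tau})$ is non-empty, since the triple $(u^k_{\text{\sc d},\tau}, e(u^k_{\text{\sc d},\tau}), 0)$ (with $u^k_{\text{\sc d},\tau}$ extended to $H^1(\Omega;\R^d)$) is feasible, so the infimum of $\mathcal{G}^k_\tau$ is finite. Pick then a minimizing sequence $(u_n, e_n, \pi_n)$.

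For coercivity I would combine three contributions. The inertial term $\frac{\rho}{2\tau^2}\|u_n - 2u^{k-1}_\tau + u^{k-2}_\tau\|^2_{L^2}$ bounds $u_n$ in $L^2(\Omega;\R^d)$ because $\rho \geq \rho_0 > 0$. The viscous term is coercive in $L^2(\Omega;\mthree_{\rm sym})$ because $\D(\alpha^{k-1}_\tau) = \D_0 + \chi\,\C(\alpha^{k-1}_\tau)$ with $\D_0$ positive definite and $\C(\alpha^{k-1}_\tau)$ positive semidefinite, so $\D(\alpha^{k-1}_\tau) e{:}e \geq c_0 |e|^2$. Finally, the plastic-dissipation term satisfies $\mathcal{R}(\alpha^{k-1}_\tau, \pi_n - \pi^{k-1}_\tau) \geq m\,\|\pi_n - \pi^{k-1}_\tau\|_{\mb}$ with $m := \min_{\bar\Omega} \SYLD(\alpha^{k-1}_\tau) > 0$, since $\alpha^{k-1}_\tau \in W^{1,p}(\Omega) \hookrightarrow C(\bar\Omega)$ by $p > d$ and $\SYLD$ is positive and continuous. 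Combined with the kinematic relation $e(u_n) = e_n + \pi_n$ in $\Omega$, these bounds yield a uniform $BD$-bound on $u_n$, and compactness provides $u_n \to u$ weakly$^*$ in $BD(\Omega;\R^d)$ (hence strongly in $L^{d/(d-1)}$), $e_n \to e$ weakly in $L^2$, and $\pi_n \to \pi$ weakly$^*$ in $\mb$.

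The defining constraints of $\pdeor(u^k_{\text{\sc d},\tau})$, including the relaxed Dirichlet condition on $\Gamma_{\text{\sc d}}$, are stable under these convergences provided $u_n$ is extended by $u^k_{\text{\sc d},\tau}$ across $\Gamma_{\text{\sc d}}$ and $\pi_n$ is viewed as a measure on $\Omega \cup \Gamma_{\text{\sc d}}$, following \cite{dalmaso.desimone.mora06}; the boundary condition then becomes an identity between weakly$^*$ converging measures. Lower semicontinuity of $\mathcal{G}^k_\tau$ follows term-by-term: the quadratic $L^2$-contributions in $u$ and $e$ are convex and $L^2$-continuous hence weakly lsc, the body-load term passes to the limit by strong $L^1$-convergence of $u_n$, and Reshetnyak's theorem yields weak$^*$ lsc of $\pi \mapsto \mathcal{R}(\alpha^{k-1}_\tau, \pi - \pi^{k-1}_\tau)$ on $\mb$ because $\SYLD \circ \alpha^{k-1}_\tau \in C(\bar\Omega)$ is positive. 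Therefore $(u,e,\pi)$ is a minimizer.

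Uniqueness follows from strict convexity of $\mathcal{G}^k_\tau$ in $(u,e)$: the inertial term is strictly convex in $u$ on $L^2$ (as $\rho \geq \rho_0 > 0$), and the $\frac{1}{2\tau}\D_0 e{:}e$ portion of the viscous term is strictly convex in $e$. Any two minimizers must then share $u$ and $e$, and hence, via $\pi = e(u) - e$ in $\Omega$ together with the relaxed Dirichlet trace condition on $\Gamma_{\text{\sc d}}$, also share $\pi$. The main technical subtlety I anticipate is precisely this weak$^*$ stability of the relaxed Dirichlet condition; it is handled by the measure-theoretic formulation of $\pi$ on $\Omega \cup \Gamma_{\text{\sc d}}$ together with the extension trick, which removes any need for a direct trace-continuity argument under weak$^*$ $BD$-convergence.
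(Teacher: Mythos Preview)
Your argument is correct and follows essentially the same route as the paper's proof, which merely records ``compactness, lower-semicontinuity, and Korn's inequality \eqref{eq:korn}'' for existence and ``strict convexity \ldots\ and the fact that $\pdeor(u^k_{\text{\sc d},\tau})$ is affine'' for uniqueness; you have simply spelled out the details. One minor difference: you obtain the $L^2$ (hence $L^1$) bound on $u_n$ directly from the inertial term, so that your $BD$-bound comes from $e(u_n)=e_n+\pi_n$ together with the separate $L^2$ and $\mathcal{M}_b$ bounds on $e_n$ and $\pi_n$, and you never actually invoke Korn's inequality \eqref{eq:korn}; the paper lists Korn as an ingredient, but in the dynamic setting with $\rho\ge\rho_0>0$ your shortcut is perfectly valid.
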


 \begin{proof}
 The result follows by compactness, lower-semicontinuity, and by
 Korn's inequality \eqref{eq:korn}. The uniqueness of the solution is
 a consequence of the strict convexity of the functional, and the \UUU
 fact that $\pdeor(u^k_{\text{\sc d},\tau})$ is affine. \EEE
 \end{proof}
 %%%%%%%%%%%%%%%%%%%%%%%%%%%%%%%%%%%%%%%%
 
Minimizers of  \eqref{eq:min-disp-strain} satisfy the following first order 
optimality conditions.

\begin{proposition}[Time-discrete Euler-Lagrange equations for displacement and strains]
 \label{prop:disp-strain}
 Let $\alpha_{\tau}^{k-1}\in W^{1,p}(\Omega)\cap L^{\infty}(\Omega)$ be a solution to \eqref{eq:form-disc-damage} satisfying $0\leq \alpha_{\tau}^{k-1}\leq 1$. Let $(u_{\tau}^k,e_{{\rm el},\tau}^k, \pi_{\tau}^k)$ be the minimizing triple of \eqref{eq:min-disp-strain}. Then, $(u_{\tau}^k,e_{{\rm el},\tau}^k, \pi_{\tau}^k)$ solves \eqref{eq:form-disc-equilibrium} and \eqref{eq:discrete-flow-rule}, ${\rm div }\,\sigma_{\tau}^k\in L^2(\Omega;\R^d)$, and $[\sigma_{\tau}^k\nu_\Gamma^{}]=0$ on $\Gamma_{\text{\sc n}}$.
\end{proposition}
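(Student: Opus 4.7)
The strategy is to derive \eqref{eq:form-disc-equilibrium} a.e., the regularity ${\rm div}\,\sigma_\tau^k \in L^2(\Omega;\R^d)$, the Neumann trace condition, and the discrete flow rule \eqref{eq:discrete-flow-rule} from the minimality of $(u_\tau^k, e_{{\rm el},\tau}^k, \pi_\tau^k)$ by testing against admissible perturbations $(\tilde u, \tilde e, \tilde\pi) \in \pdeor(0)$, i.e.\ triples satisfying $e(\tilde u) = \tilde e + \tilde\pi$ in $\Omega$ and $\tilde\pi = -\tilde u \odot \nu_\Gamma \HH^{d-1}$ on $\Gamma_{\text{\sc d}}$. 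The quadratic part of \eqref{eq:min-disp-strain} is smoothly differentiable, while the dissipative term $\mathcal{R}(\alpha_\tau^{k-1},\cdot)$ is convex, positively one-homogeneous and even; these two structural properties are the key handles.

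For the equilibrium equation and the Neumann condition, fix $\tilde u \in H^1(\Omega;\R^d)$ with $\tilde u|_{\Gamma_{\text{\sc d}}} = 0$ and test with $(\tilde u, e(\tilde u), 0) \in \pdeor(0)$; since $\tilde\pi = 0$ leaves the $\mathcal{R}$-term unchanged, differentiating at $\epsilon = 0$ yields
$$\int_\Omega \bigl(\rho\,\delta^2 u_\tau^k \cdot \tilde u + \sigma_\tau^k : e(\tilde u) - f_\tau^k \cdot \tilde u \bigr)\,\dx = 0.$$
Specializing to $\tilde u \in H^1_0(\Omega;\R^d)$ recovers \eqref{eq:form-disc-equilibrium} distributionally, and since $\rho\,\delta^2 u_\tau^k - f_\tau^k \in L^2(\Omega;\R^d)$ one concludes ${\rm div}\,\sigma_\tau^k \in L^2$ and the equation holds a.e.\ in $\Omega$. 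Subtracting this bulk identity and invoking the integration-by-parts formula \eqref{eq:def-bd-term} leaves $\langle [\sigma_\tau^k \nu_\Gamma], \tilde u \rangle_\Gamma = 0$ for every $\tilde u$ vanishing on $\Gamma_{\text{\sc d}}$, hence $[\sigma_\tau^k \nu_\Gamma] = 0$ on $\Gamma_{\text{\sc n}}$.

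For the flow rule, I would employ the tailored perturbation
$$\tilde u := \tau\delta u_\tau^k - v, \qquad \tilde e := \tau\delta e_{{\rm el},\tau}^k - e(v), \qquad \tilde\pi := \tau\delta\pi_\tau^k,$$
where $v \in H^1(\Omega;\R^d)$ is any extension of $u_{{\text{\sc d}},\tau}^k - u_{{\text{\sc d}},\tau}^{k-1}$. A direct check using the relaxed boundary condition \eqref{eq:relaxed-bc} applied to $\pi_\tau^k$ and $\pi_\tau^{k-1}$ shows $(\tilde u, \tilde e, \tilde\pi) \in \pdeor(0)$. Along the one-parameter family $(u_\tau^k + \epsilon\tilde u, e_{{\rm el},\tau}^k + \epsilon\tilde e, \pi_\tau^k + \epsilon\tilde\pi)$, the argument of $\mathcal{R}(\alpha_\tau^{k-1},\cdot)$ becomes $(1+\epsilon)\tau\delta\pi_\tau^k$, which by positive one-homogeneity and evenness equals $|1+\epsilon|\,\tau\mathcal{R}(\alpha_\tau^{k-1},\delta\pi_\tau^k)$; this is smoothly differentiable around $\epsilon = 0$ with derivative $\tau\mathcal{R}(\alpha_\tau^{k-1},\delta\pi_\tau^k)$. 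Since $\pdeor(0)$ is linear, $\epsilon$ varies through both signs and the full first-order condition reads
$$\int_\Omega \bigl(\sigma_\tau^k : \tilde e + \rho\,\delta^2 u_\tau^k \cdot \tilde u - f_\tau^k \cdot \tilde u\bigr)\,\dx + \tau \mathcal{R}(\alpha_\tau^{k-1},\delta\pi_\tau^k) = 0.$$
Using the equilibrium equation and the vanishing Neumann trace to rewrite the $\tilde u$-contribution by parts, and identifying the resulting bulk and boundary integrals with the measure $[{\rm dev}\,\sigma_\tau^k : \tau\delta\pi_\tau^k]$ on $\Omega \cup \Gamma_{\text{\sc d}}$ through the definitions of Subsection \ref{subs:weak}, yields \eqref{eq:discrete-flow-rule} after division by $\tau$.

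The main obstacle will be this last identification, since $\tilde\pi = \tau\delta\pi_\tau^k$ is merely a bounded Radon measure and the pairing $\int \sigma_\tau^k : \tilde e\,\dx$ becomes formal once $\tilde e = e(\tilde u) - \tilde\pi$ is substituted. It must be read through the duality measures $[{\rm dev}\,\sigma : {\rm dev}\,e(u)]$ and $[\sigma\nu_\Gamma]_\nu^\perp$ of \eqref{eq:def-meas-ed}, which is enabled by the $L^\infty$-regularity of ${\rm dev}\,\sigma_\tau^k$ coming from $\sigma_\tau^k \in \Sigma(\Omega)$ via \cite[Proposition 2.5]{kohn.temam}; the decomposition $\sigma_\tau^k : \tilde e = {\rm dev}\,\sigma_\tau^k : {\rm dev}\,\tilde e + \tfrac{1}{d}\,{\rm tr}\,\sigma_\tau^k\,{\rm div}\,\tilde u$ combined with ${\rm div}\,\tilde u = {\rm tr}\,\tilde e$ and ${\rm dev}\,e(\tilde u) = {\rm dev}\,\tilde e + \tilde\pi$ reduces the computation to the definition given there, and the boundary term $[\sigma_\tau^k \nu_\Gamma]_\nu^\perp \cdot (-\tilde u)\HH^{d-1}$ on $\Gamma_{\text{\sc d}}$ matches precisely the boundary part of $[{\rm dev}\,\sigma_\tau^k : \tilde\pi]$.
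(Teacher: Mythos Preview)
Your derivation of the equilibrium equation and the Neumann trace condition is standard and matches the paper's (referenced) argument. Your route to the flow rule---perturbing along $\tilde\pi=\tau\delta\pi_\tau^k$ and exploiting that $\mathcal{R}(\alpha_\tau^{k-1},(1{+}\epsilon)\tau\delta\pi_\tau^k)=(1{+}\epsilon)\,\mathcal{R}(\alpha_\tau^{k-1},\tau\delta\pi_\tau^k)$ for $|\epsilon|$ small, hence differentiable at $\epsilon=0$---is a valid and slightly more direct variant of the paper's approach. The paper instead first establishes the general variational inequality
\[
\mathcal{R}(\alpha_\tau^{k-1},\tau\delta\pi_\tau^k+\pi)-\mathcal{R}(\alpha_\tau^{k-1},\tau\delta\pi_\tau^k)\ge[{\rm dev}\,\sigma_\tau^k:\pi](\Omega\cup\Gamma_{\text{\sc d}})
\]
for all admissible $\pi$ (Lemma~\ref{prop:second-E-L}) and then specializes to $\pi=\pm\tau\delta\pi_\tau^k$ (Corollary~\ref{cor:f-r}); that detour has the advantage of producing an inequality reused later in Lemma~\ref{prop:third-E-L}.

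There is, however, one genuine gap. Your final identification rests on the integration-by-parts Lemma~\ref{prop:integ-parts}, which requires $\sigma_\tau^k\in\Sigma(\Omega)$, i.e.\ ${\rm dev}\,\sigma_\tau^k\in L^\infty(\Omega;\mthree_{\rm dev})$. You write that this $L^\infty$-bound ``comes from $\sigma_\tau^k\in\Sigma(\Omega)$ via \cite[Proposition~2.5]{kohn.temam}'', but that is circular: membership in $\Sigma(\Omega)$ \emph{by definition} \eqref{eq:def-sigma-omega} already demands ${\rm dev}\,\sigma_\tau^k\in L^\infty$, and the cited Kohn--Temam result only upgrades this to $\sigma_\tau^k\in L^\infty$. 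The missing step is the stress constraint $|{\rm dev}\,\sigma_\tau^k|\le\SYLD(\alpha_\tau^{k-1})$ a.e., obtained from minimality by testing with $(0,-\tilde\pi,\tilde\pi)\in\pdeor(0)$ for arbitrary $\tilde\pi\in C_c^\infty(\Omega;\mthree_{\rm dev})$ and using the sublinearity of $\mathcal{R}(\alpha_\tau^{k-1},\cdot)$ to get $\int_\Omega{\rm dev}\,\sigma_\tau^k:\tilde\pi\,\dx\le\int_\Omega\SYLD(\alpha_\tau^{k-1})|\tilde\pi|\,\dx$. This is part of what \cite[Proposition~3.3]{babadjian.mora}, invoked by the paper, actually supplies; once you insert it, the rest of your argument goes through.
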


\begin{proof}
We omit the proof of \eqref{eq:form-disc-equilibrium}, as it follows closely the argument in \cite[Proposition 3.3]{babadjian.mora}. The proof of \eqref{eq:discrete-flow-rule} is postponed to Corollary \ref{cor:f-r}.
\end{proof}

We conclude this subsection by showing existence of solutions to \eqref{eq:form-disc-damage}.

%%%%%%%%%%%%%%%%%%%%%%%%%%%%%%%%%%
\begin{lemma}[Existence of admissible time-discrete damage variables]
\label{lemma:damage1}
Let $k\in\{1,\dots,{T}/{\tau}\}$, and assume that $\alpha_{\tau}^{k-1}\in 
W^{1,p}(\Omega)\cap L^{\infty}(\Omega)$, \UUU with  $0\leq
\alpha_{\tau}^{k-1}\leq 1$, is given. \EEE Let $(u_{\tau}^k,e_{{\rm el},\tau}^k,\pi_{\tau}^k)$ 
be the minimizing triple of \eqref{eq:min-disp-strain}. Then there exists 
$\alpha_{\tau}^k\in W^{1,p}(\Omega)\cap L^{\infty}(\Omega)$ solving \eqref{eq:form-disc-damage}, 
and satisfying $0\leq \alpha_{\tau}^{k}\leq 1$.
\end{lemma}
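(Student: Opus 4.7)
The plan is to construct $\alpha_\tau^k$ as a minimizer of the problem \eqref{eq:f-tau-k} and then derive \eqref{eq:form-disc-damage} together with the boundary condition \eqref{eq:bc-disc2} from the associated Euler--Lagrange conditions. Existence of a minimizer follows by the direct method of the calculus of variations: the admissible set $\{\alpha\in W^{1,p}(\Omega):0\le\alpha\le1\}$ is nonempty (it contains $\alpha_\tau^{k-1}$), convex, and sequentially weakly closed in $W^{1,p}(\Omega)$; the functional is coercive thanks to the $\kappa|\nabla\alpha|^p/p$ term combined with the pointwise $L^\infty$ bound; and it is weakly lower semicontinuous, owing to the convexity of $\zeta$ and of $|\cdot|^p$ together with the continuity in $\alpha$ of the remaining contribution. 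This last property follows, via the Sobolev embedding $W^{1,p}(\Omega)\hookrightarrow C(\overline\Omega)$ (recall $p>d$) and dominated convergence, from the $C^1$-regularity of $\bbC$ and $\phi$ assumed in \eqref{eq:hp+}.

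Next, I would write the first-order optimality conditions. For $\varphi\in W^{1,p}(\Omega)$ such that $\alpha_\tau^k+s\varphi$ remains admissible for $s>0$ small, the fundamental theorem of calculus yields, pointwise,
\begin{equation*}
\frac{\d}{\d s}\bigg|_{s=0}\int_0^{\alpha_\tau^k+s\varphi}\Big(\tfrac12\bbC^\circ(\xi,\alpha_\tau^{k-1})e_{{\rm el},\tau}^k{:}e_{{\rm el},\tau}^k-\phi^\circ(\xi,\alpha_\tau^{k-1})\Big)\,\d\xi=\Big(\tfrac12\bbC^\circ(\alpha_\tau^k,\alpha_\tau^{k-1})e_{{\rm el},\tau}^k{:}e_{{\rm el},\tau}^k-\phi^\circ(\alpha_\tau^k,\alpha_\tau^{k-1})\Big)\varphi,
\end{equation*}
while the $\zeta$-term contributes $\eta\,\delta\alpha_\tau^k\,\varphi$ when $\delta\alpha_\tau^k<0$ and, more generally, any selection of $\partial\zeta(\delta\alpha_\tau^k)\,\varphi$ at points where $\delta\alpha_\tau^k=0$. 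The Neumann condition \eqref{eq:bc-disc2} emerges as the natural boundary condition, no boundary integral being present in the functional. Because $\zeta\equiv+\infty$ on $(0,+\infty)$, every admissible $\alpha$ automatically satisfies $\alpha\le\alpha_\tau^{k-1}\le1$, so the upper bound needs no further argument.

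The only delicate point is the lower bound $\alpha_\tau^k\ge0$. I would argue by comparison with the truncation $\tilde\alpha:=\max\{\alpha_\tau^k,0\}\in[0,1]$. On the set $E:=\{\alpha_\tau^k<0\}$ one has $|\nabla\tilde\alpha|=0\le|\nabla\alpha_\tau^k|$, and $|\tilde\alpha-\alpha_\tau^{k-1}|=\alpha_\tau^{k-1}\le\alpha_\tau^{k-1}-\alpha_\tau^k=|\alpha_\tau^k-\alpha_\tau^{k-1}|$, so both the gradient term and the $\zeta$-term weakly decrease upon truncation. For the remaining integrated term, the cohesive hypotheses $\bbC'(0)=0$ and $\phi'(0)\ge0$ in \eqref{eq:hp-cohesive}, combined with $\bbC(s)=\bbC(0)$ for $s<0$ and the monotonicity of $\bbC$ and $\phi$, imply that the map $s\mapsto\int_0^s\big(\tfrac12\bbC^\circ(\xi,\alpha_\tau^{k-1})e_{{\rm el},\tau}^k{:}e_{{\rm el},\tau}^k-\phi^\circ(\xi,\alpha_\tau^{k-1})\big)\,\d\xi$ attains its minimum at $s=0$ when restricted to $(-\infty,0]$. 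Hence the truncation strictly decreases the functional unless $E$ has Lebesgue measure zero, contradicting minimality.

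The main obstacle is precisely this last step: the lower bound cannot be imposed via an explicit Lagrange multiplier without spoiling the regularity of $\alpha$ (cf.\ Remark~\ref{cohe}), so one must crucially exploit the cohesive damage hypothesis to show that the constraint is inactive. Once $\alpha_\tau^k\ge0$ is secured, the variational inequality obtained from the first-order conditions translates into the set-valued differential inclusion \eqref{eq:form-disc-damage}, with $\partial\zeta(\delta\alpha_\tau^k)$ precisely encoding the monotonicity constraint $\delta\alpha_\tau^k\le0$ enforced by $\zeta$.
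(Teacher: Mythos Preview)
Your overall strategy---solve the constrained minimization \eqref{eq:f-tau-k} by the direct method, then read off \eqref{eq:form-disc-damage} as the Euler--Lagrange inclusion once the box constraint is shown to be inactive---is exactly the paper's approach. The upper bound is indeed absorbed by the unidirectionality encoded in $\zeta$, and the existence and weak lower semicontinuity parts are fine.

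The gap is in your truncation step for the lower bound. Your claim that the map
\[
s\longmapsto G(s):=\int_0^s\Big(\tfrac12\bbC^\circ(\xi,\alpha_\tau^{k-1})\,e_{{\rm el},\tau}^k{:}e_{{\rm el},\tau}^k-\phi^\circ(\xi,\alpha_\tau^{k-1})\Big)\,\d\xi
\]
attains its minimum at $s=0$ on $(-\infty,0]$ is not true under the standing assumptions. Take the admissible data $\bbC(\alpha)=(\alpha_+)^2I$ (which satisfies $\bbC'(0)=0$ and $\bbC(\alpha)=\bbC(0)$ for $\alpha<0$) and $\phi\equiv0$. For $\xi<0$ and $a:=\alpha_\tau^{k-1}(x)>0$ one computes
\[
\bbC^\circ(\xi,a)=\frac{\bbC(0)-\bbC(a)}{\xi-a}=\frac{a^2}{a-\xi}\,I,
\qquad
\phi^\circ(\xi,a)=0,
\]
so $G'(\xi)=\tfrac12\,\dfrac{a^2|e_{{\rm el},\tau}^k|^2}{a-\xi}>0$ whenever $e_{{\rm el},\tau}^k(x)\neq 0$. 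Hence $G$ is strictly increasing on $(-\infty,0]$ and $G(s)<G(0)=0$ for $s<0$: truncation at~$0$ \emph{increases} the $G$-contribution. In fact, comparing orders for small $|\alpha^*|<0$, the gain from the quadratic $\zeta$-term is of size $\tfrac{\eta}{\tau}a|\alpha^*|$ while the loss from the $G$-term is of size $\tfrac12 a|e_{{\rm el},\tau}^k|^2|\alpha^*|$; since $e_{{\rm el},\tau}^k\in L^2(\Omega)$ only, $|e_{{\rm el},\tau}^k|^2$ can be arbitrarily large on sets of positive measure and the balance need not go your way. Thus the comparison with $\tilde\alpha=\max\{\alpha_\tau^k,0\}$ does \emph{not} show that the constraint $\alpha\ge 0$ is inactive, and the passage from the constrained variational inequality to the unconstrained inclusion \eqref{eq:form-disc-damage} is not justified as written. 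The paper only says ``owing to a truncation argument'' here, so you are in good company, but your explicit claim about $G$ is incorrect and the step needs a different argument (for instance, one that exploits the full Euler--Lagrange system and the $p$-Laplace structure, or additional structural assumptions ruling out the competition just described).
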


\begin{proof}
 We preliminary observe that $\alpha_{\tau}^k$ solves
 \eqref{eq:form-disc-damage} if and only if it minimizes the
 functional \UUU in \eq{eq:f-tau-k}. \EEE
The existence of a minimizer 
$\alpha_{\tau}^k\in W^{1,p}(\Omega)$ follows by the %regularity \COMMENT{???} 
continuity of $\phi(\cdot)$ and $\C(\cdot)$, by lower-semicontinuity, 
and by the Dominated Convergence Theorem. The fact that $\alpha_{\tau}^k(x)\le1$ 
for every $x\in \Omega$ is a consequence of the assumption that 
$0\le\alpha_\tau^{k-1}\le1$  in $\Omega$, and \UUU of the constraint
$\alpha_{\tau}^k\leq \alpha_{\tau}^{k-1}$. The constraint
$0\leq \alpha_{\tau}^k$ instead is satisfied due to \EEE the assumptions 
on $\C(\cdot)$  and $\phi(\cdot)$ (see Subsections \ref{subs:el-visc} and 
\ref{subs:kin}), and \GGG owing \EEE to a truncation argument. 
 \end{proof}
 
\section{A-priori energy estimates}\label{sec-apriori}
%        ~~~~~~~~~~~~~~~~~~~~~~~~~
\UUU In order to pass to the limit in the discrete scheme as the
fineness $\tau$ of the partition goes to $0$ we \EEE establish a few a-priori estimates on time 
interpolants between the quadruple identified via the time-discretization 
scheme \UUU of \EEE  Section \ref{sec:time-discrete}. We first rewrite 
\cite[Proposition 2.2]{dalmaso.desimone.mora06} in our framework.

\begin{lemma}[Integration by parts]
\label{prop:integ-parts}
Let $\sigma\in\Sigma(\Omega)$, $u_{\text{\sc d}}\in H^1(\Omega;\R^d)$, and 
$(u,e_{\rm el},\pi)\in\pdeor(u_{\text{\sc d}})$ with $\pdeor(\cdot)$ from 
\eq{eq:def-A-w}, with $u\in L^2(\Omega;\R^d)$. Assume that 
$[\sigma \nu_\Gamma^{}]=0$ on $\Gamma_{\text{\sc n}}$. Then
$$
[{\rm dev}\,\sigma:\pi](\Omega\cup\Gamma_{\text{\sc d}})
+\int_{\Omega}\sigma:(e_{\rm el}-e(u_{\text{\sc d}}))\,\dx
=-\int_{\Omega}{\rm div}\,\sigma\cdot(u-u_{\text{\sc d}})\,\dx.
$$
\end{lemma}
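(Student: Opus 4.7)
The plan is to establish the identity essentially along the lines of \cite[Proposition 2.2]{dalmaso.desimone.mora06}, by directly exploiting the two-piece definition of the duality measure $[{\rm dev}\,\sigma:\pi]$ together with the distributional formula \eqref{eq:def-meas-ed}. First, I would decompose
$$
[{\rm dev}\,\sigma:\pi](\Omega\cup\Gamma_{\text{\sc d}})=[{\rm dev}\,\sigma:{\rm dev}\,e(u)](\Omega)-\int_\Omega{\rm dev}\,\sigma:{\rm dev}\,e_{\rm el}\,\dx+\int_{\Gamma_{\text{\sc d}}}[\sigma\nu_\Gamma]_\nu^\perp\Cdot(u_{\text{\sc d}}-u)\,\d\HH^{d-1},
$$
which is the defining expression for $[{\rm dev}\,\sigma:\pi]$ once one takes into account that ${\rm div}\,u={\rm tr}\,e_{\rm el}+{\rm tr}\,\pi={\rm tr}\,e_{\rm el}\in L^2(\Omega)$ (since $\pi\in\mthree_{\rm dev}$), so that the bulk formula \eqref{eq:def-meas-ed} is well posed.

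Next, I would evaluate the bulk distribution $[{\rm dev}\,\sigma:{\rm dev}\,e(u)](\Omega)$ by testing against a suitable sequence $\varphi_n\in C_c^\infty(\Omega)$ with $\varphi_n\to 1$ pointwise and such that $-u\odot\nabla\varphi_n\rightharpoonup u\odot\nu_\Gamma\,\HH^{d-1}\!\!\lfloor\Gamma$ in the sense of distributions (a standard construction via shifted cutoff near $\partial\Omega$). The first two terms of \eqref{eq:def-meas-ed} converge to $-\int_\Omega{\rm div}\,\sigma\Cdot u\,\dx-\frac1d\int_\Omega{\rm tr}\,\sigma\,{\rm tr}\,e_{\rm el}\,\dx$, while the last term is recognized, via \eqref{eq:def-bd-term} and the $H^{-1/2}$--$H^{1/2}$ duality, as the boundary pairing $\langle[\sigma\nu_\Gamma],u\rangle_\Gamma$. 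Using $[\sigma\nu_\Gamma]=0$ on $\Gamma_{\text{\sc n}}$, this pairing concentrates on $\Gamma_{\text{\sc d}}$.

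At this point I would combine everything: the bulk identity
$$
\int_\Omega{\rm dev}\,\sigma:{\rm dev}\,e_{\rm el}\,\dx+\frac1d\int_\Omega{\rm tr}\,\sigma\,{\rm tr}\,e_{\rm el}\,\dx=\int_\Omega\sigma:e_{\rm el}\,\dx
$$
converts the deviatoric pairings to the full stress-strain product, while on $\Gamma_{\text{\sc d}}$ the tangential decomposition $[\sigma\nu_\Gamma]=[\sigma\nu_\Gamma]_\nu+[\sigma\nu_\Gamma]_\nu^\perp$, combined with $(u_{\text{\sc d}}-u)\odot\nu_\Gamma$ being a symmetric deviatoric-valued surface measure, collapses the two boundary contributions into the single term $\int_{\Gamma_{\text{\sc d}}}[\sigma\nu_\Gamma]\Cdot(u_{\text{\sc d}}-u)\,\d\HH^{d-1}$. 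This last surface integral is precisely what one obtains by applying \eqref{eq:def-bd-term} to the test field $u_{\text{\sc d}}\in H^1(\Omega;\R^d)$, namely
$$
\int_{\Gamma_{\text{\sc d}}}[\sigma\nu_\Gamma]\Cdot u_{\text{\sc d}}\,\d\HH^{d-1}=\int_\Omega{\rm div}\,\sigma\Cdot u_{\text{\sc d}}\,\dx+\int_\Omega\sigma:e(u_{\text{\sc d}})\,\dx,
$$
so that the boundary contributions regroup into the desired bulk integrals against $u-u_{\text{\sc d}}$ and $e_{\rm el}-e(u_{\text{\sc d}})$.

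The main obstacle will be the rigorous justification of Step 2, i.e.\ passing to the limit $\varphi_n\to 1$ in \eqref{eq:def-meas-ed} and identifying the limit of $\int\sigma:(u\odot\nabla\varphi_n)\,\dx$ as the correct boundary pairing. This is delicate because $u\in BD$ has only $L^1$ boundary trace, while $[\sigma\nu_\Gamma]\in H^{-1/2}(\Gamma;\R^d)$; the identification nonetheless follows from the very definition \eqref{eq:def-bd-term} once one tests it against a suitable $H^1$-extension of $u$ arising from the additive decomposition, which is available thanks to $(u,e_{\rm el},\pi)\in\mathscr{A}(u_{\text{\sc d}})$ and the relaxed boundary condition \eqref{eq:relaxed-bc}. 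Alternatively, one can reduce to smooth data by mollification, observe that both sides are continuous under the relevant convergences (strong in $L^2$ for $e_{\rm el}$, weak$^*$ in $\mb$ for $\pi$, weak$^*$ in $BD$ for $u$), and use density.
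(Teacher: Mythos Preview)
The paper does not prove this lemma; it is introduced with the phrase ``We first rewrite \cite[Proposition~2.2]{dalmaso.desimone.mora06} in our framework'' and then stated without argument. Your proposal explicitly invokes the same reference and sketches essentially the proof strategy used there: expand $[{\rm dev}\,\sigma:\pi](\Omega\cup\Gamma_{\text{\sc d}})$ from its two-piece definition, pass to the limit $\varphi_n\to 1$ in \eqref{eq:def-meas-ed}, and recombine using the algebraic identity ${\rm dev}\,\sigma:{\rm dev}\,e_{\rm el}+\tfrac1d\,{\rm tr}\,\sigma\,{\rm tr}\,e_{\rm el}=\sigma:e_{\rm el}$ together with the trace formula \eqref{eq:def-bd-term} applied to $u_{\text{\sc d}}$. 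So your approach is the same as the one the paper defers to.

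One point deserves sharper emphasis, which you touch on but do not fully isolate. The pairing you write as $\int_{\Gamma_{\text{\sc d}}}[\sigma\nu_\Gamma]\Cdot(u_{\text{\sc d}}-u)\,\d\HH^{d-1}$ is a priori ill-defined, since $[\sigma\nu_\Gamma]\in H^{-1/2}(\Gamma;\R^d)$ while $u$ has only an $L^1$ trace. The reason it is nonetheless meaningful is \emph{not} merely that $[\sigma\nu_\Gamma]=0$ on $\Gamma_{\text{\sc n}}$, but that the requirement $\pi\in\mb$ in the definition of $\mathscr{A}(u_{\text{\sc d}})$ forces ${\rm tr}\big((u_{\text{\sc d}}-u)\odot\nu_\Gamma\big)=(u_{\text{\sc d}}-u)\Cdot\nu_\Gamma=0$ $\HH^{d-1}$-a.e.\ on $\Gamma_{\text{\sc d}}$. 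Hence $u_{\text{\sc d}}-u$ is tangential there, and only the tangential component $[\sigma\nu_\Gamma]_\nu^\perp\in L^\infty(\Gamma;\R^d)$ is paired against $u_{\text{\sc d}}-u\in L^1(\Gamma_{\text{\sc d}};\R^d)$. This is precisely what makes the boundary contribution rigorous without any $H^1$-extension of $u$; the density/mollification detour you propose at the end is therefore unnecessary.
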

\UUU Note that the above lemma serves as definition of $[{\rm
  dev}\,\sigma:\pi](\Omega\cup\Gamma_{\text{\sc d}})$, which is a
priori not defined for ${\rm dev}\, \sigma \in L^2(\Omega;\Rdev)$ and
$\pi \in \mb$. \EEE

%\COMMENT{SHOULD NOT WE SAY SMTHG ABOUT that $[{\rm dev}\,\sigma:\pi]$ 
%is a duality between ${\rm dev}\,\sigma\in L^2(\Omega;\Rdev)$ and 
%$\pi\in\mb$, which is in general not well defined. }

We are now in a position \UUU of providing, \EEE in the following lemmas and corollary, 
further optimality conditions for triples $(u_{\tau}^k,e_{\tau}^k,\pi_{\tau}^k)$ 
solving \eqref{eq:min-disp-strain}.

\begin{lemma}[Discrete Euler-Lagrange equations for the plastic strain]
\label{prop:second-E-L}
 Let $(u_{\tau}^k,e_{{\rm el},\tau}^k, \pi_{\tau}^k)$ be the minimizing triple of 
\eqref{eq:min-disp-strain}, and let $\sigma_{\tau}^k$ be the quantity defined 
in \eqref{eq:discrete-flow-rule}. Then, there holds
 \begin{equation}
 \label{eq:need-d}
 \mathcal{R}({\alpha_\tau^{k-1}},\tau\delta\pi_{\tau}^k+\pi)-\mathcal{R}({\alpha_\tau^{k-1}},\tau\delta \pi_{\tau}^k)
-[{\rm dev}\,\sigma_{\tau}^k:\pi](\Omega\cup\Gamma_{\text{\sc d}})\geq 0
 \end{equation}
for every $\pi\in \mb$ such that there exist $u\in BD(\Omega;\R^d)\cap L^2(\Omega;\R^d)$, and 
$e\in L^2(\Omega;\mthree_{\rm sym})$ with $(u,e,\pi)\in\pdeor(0)$. 
 \end{lemma}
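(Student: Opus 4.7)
The plan is to derive \eqref{eq:need-d} directly from the minimality property of $(u_\tau^k, e_{{\rm el},\tau}^k, \pi_\tau^k)$ in \eqref{eq:min-disp-strain} via a first-order perturbation argument. For any test triple $(u,e,\pi) \in \pdeor(0)$ and any $s \in (0,1]$, I would observe that $(u_\tau^k + su,\, e_{{\rm el},\tau}^k + se,\, \pi_\tau^k + s\pi)$ lies in $\pdeor(u^k_{\text{\sc d},\tau})$, since the affine constraints defining $\pdeor$ are preserved under the addition of homogeneous admissible directions. Comparing the value of the objective at this perturbed competitor with its value at the minimizer, dividing by $s$, and expanding the quadratic terms, one obtains a difference quotient in $s$. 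The quadratic contributions yield linear terms involving $\C(\alpha_\tau^{k-1})e_{{\rm el},\tau}^k$, $\D(\alpha_\tau^{k-1})\delta e_{{\rm el},\tau}^k$, the inertial term $\rho \delta^2 u_\tau^k$, and the load $f_\tau^k$. For the nonsmooth dissipation, convexity of $\mathcal{R}(\alpha_\tau^{k-1},\cdot)$ together with monotonicity of the difference quotients of convex functions gives
\begin{equation*}
\lim_{s \to 0^+}\frac{\mathcal{R}(\alpha_\tau^{k-1}, \tau\delta\pi_\tau^k + s\pi) - \mathcal{R}(\alpha_\tau^{k-1}, \tau\delta\pi_\tau^k)}{s} \leq \mathcal{R}(\alpha_\tau^{k-1}, \tau\delta\pi_\tau^k + \pi) - \mathcal{R}(\alpha_\tau^{k-1}, \tau\delta\pi_\tau^k).
\end{equation*}

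Letting $s \to 0^+$ in the minimality inequality and recalling the stress $\sigma_\tau^k$ from \eqref{eq:discrete-flow-rule}, I would thus arrive at the linear inequality
\begin{equation*}
0 \leq \int_\Omega \sigma_\tau^k : e\,\d x + \int_\Omega (\rho \delta^2 u_\tau^k - f_\tau^k)\cdot u\,\d x + \mathcal{R}(\alpha_\tau^{k-1}, \tau\delta\pi_\tau^k + \pi) - \mathcal{R}(\alpha_\tau^{k-1}, \tau\delta\pi_\tau^k).
\end{equation*}
The discrete equilibrium \eqref{eq:form-disc-equilibrium}, already secured by Proposition~\ref{prop:disp-strain}, identifies $\rho\delta^2 u_\tau^k - f_\tau^k$ with ${\rm div}\,\sigma_\tau^k \in L^2(\Omega;\R^d)$, while the same proposition gives $[\sigma_\tau^k \nu_\Gamma]=0$ on $\Gamma_{\text{\sc n}}$. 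I would then apply Lemma~\ref{prop:integ-parts} to the admissible triple $(u,e,\pi) \in \pdeor(0)$ (i.e.\ with zero boundary datum), which yields
\begin{equation*}
\int_\Omega \sigma_\tau^k : e\,\d x + \int_\Omega {\rm div}\,\sigma_\tau^k \cdot u\,\d x = -\,[{\rm dev}\,\sigma_\tau^k : \pi](\Omega \cup \Gamma_{\text{\sc d}}).
\end{equation*}
Substituting this identity in the first-order inequality immediately produces \eqref{eq:need-d}.

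The main technical obstacle is that ${\rm dev}\,\sigma_\tau^k$ is not known to be in $L^\infty$, so the classical stress-strain duality of \cite{kohn.temam} is a priori unavailable; this is the precise reason why Lemma~\ref{prop:integ-parts} is invoked not as a calculus identity but as the \emph{definition} of the pairing $[{\rm dev}\,\sigma_\tau^k : \pi](\Omega \cup \Gamma_{\text{\sc d}})$, as flagged in the remark following that lemma. A secondary point worth keeping in mind is that, because the perturbation parameter $s$ must be taken nonnegative to stay within the feasible set of admissible triples with 1-homogeneous $\mathcal{R}$, one obtains only a one-sided inequality—consistent with the set-valued nature of the plastic flow rule.
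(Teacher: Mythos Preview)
Your proof is correct and follows essentially the same approach as the paper: perturb the minimizer by $\lambda(u,e,\pi)\in\pdeor(0)$ with $\lambda>0$, use convexity of $\mathcal{R}(\alpha_\tau^{k-1},\cdot)$ to bound the dissipation difference quotient from above by its value at $\lambda=1$, pass to the limit in the remaining quadratic terms, and then invoke the equilibrium equation together with Lemma~\ref{prop:integ-parts} to rewrite the linear terms as the duality pairing. Your remarks on the role of Lemma~\ref{prop:integ-parts} as a definition of the stress--strain pairing and on the one-sidedness of the inequality are accurate and match the paper's perspective.
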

\begin{proof}
Considering variations of the form $(u_{\tau}^k,e_{\tau}^k,\pi_{\tau}^k)+\lambda (u,e,\pi)$ for $\lambda\geq 0$ and $(u,e,\pi)\in \mathscr{A}(0)$ in \eqref{eq:min-disp-strain}, by the convexity of $\mathcal{R}$ in its second variable we obtain
$$\frac{1}{\lambda}\big(\mathcal{R}({\alpha_\tau^{k-1}},\tau\delta \pi_{\tau}^k+\lambda \pi)-\mathcal{R}({\alpha_\tau^{k-1}},\tau\delta \pi_{\tau}^k)\big)\leq \mathcal{R}({\alpha_\tau^{k-1}},\tau\delta \pi_{\tau}^k+\pi)-\mathcal{R}({\alpha_\tau^{k-1}},\tau\delta \pi_{\tau}^k),$$
which yields
\begin{align}
&\label{eq:1eq-p-extra}\int_{\Omega}\sigma_{\tau}^k:e\, \d x+ \int_{\Omega}\rho\delta^2 u_{\tau}^k\cdot u\, \d x+\mathcal{R}({\alpha_\tau^{k-1}},\tau\delta \pi_{\tau}^k+\pi)-\mathcal{R}({\alpha_\tau^{k-1}},\tau\delta \pi_{\tau}^k)-\int_{\Omega}f_{\tau}^k\cdot u\, \d x\geq 0,
\end{align}
for every $u\in BD(\Omega;\R^d)\cap L^2(\Omega;\R^d)$, $e\in L^2(\Omega;\mthree_{\rm sym})$, and $\pi\in \mb$ such that $(u,e,\pi)\in\pdeor(0)$.
In view of Lemma \ref{prop:integ-parts}, and by \eqref{eq:form-disc-equilibrium} the previous inequality implies \eqref{eq:need-d}.
\end{proof}

\begin{corollary}[Discrete flow-rule]\label{cor:f-r}
 Let $(u_{\tau}^k,e_{{\rm el},\tau}^k, \pi_{\tau}^k)$ be the minimizing triple of \eqref{eq:min-disp-strain}, let $\alpha_{\tau}^k$ be the solution to \eqref{eq:form-disc-damage} provided by Lemma \ref{lemma:damage1}, and let $\sigma_{\tau}^k$ be the quantity defined in \eqref{eq:discrete-flow-rule}. Then, $(u_{\tau}^k,e_{{\rm el},\tau}^k, \pi_{\tau}^k,\alpha_{\tau}^k)$ solve the discrete flow-rule \eqref{eq:discrete-flow-rule}.
 \end{corollary}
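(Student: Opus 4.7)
The plan is to extract the equality \eqref{eq:discrete-flow-rule} from Lemma~\ref{prop:second-E-L} by testing the variational inequality \eqref{eq:need-d} with the two sign-opposite choices $\pi=\pm\tau\delta\pi_\tau^k$ and then exploiting the positive $1$-homogeneity of $\mathcal{R}(\alpha_\tau^{k-1},\cdot)$.

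First, I would verify admissibility of the test. Since $(u_\tau^k,e_{{\rm el},\tau}^k,\pi_\tau^k)\in\mathscr{A}(u_{\text{\sc d},\tau}^k)$ and $(u_\tau^{k-1},e_{{\rm el},\tau}^{k-1},\pi_\tau^{k-1})\in\mathscr{A}(u_{\text{\sc d},\tau}^{k-1})$, the triple
\[
\bigl(\pm\tau(\delta u_\tau^k-\delta u_{\text{\sc d},\tau}^k),\ \pm\tau(\delta e_{{\rm el},\tau}^k-e(\delta u_{\text{\sc d},\tau}^k)),\ \pm\tau\delta\pi_\tau^k\bigr)
\]
belongs to $\mathscr{A}(0)$: the additive decomposition $e(u)=e+\pi$ and the relaxed Dirichlet condition \eqref{eq:relaxed-bc} at boundary value $0$ both follow by taking finite differences in the analogous relations satisfied by the two iterates. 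This makes $\pi=\pm\tau\delta\pi_\tau^k$ legitimate in \eqref{eq:need-d}.

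Next, inserting $\pi=-\tau\delta\pi_\tau^k$ in \eqref{eq:need-d} and using $\mathcal{R}(\alpha_\tau^{k-1},0)=0$ together with the positive $1$-homogeneity of $\mathcal{R}(\alpha_\tau^{k-1},\cdot)$ (which holds because $\SYLD>0$ and the total-variation functional is $1$-homogeneous), I would obtain
\[
[{\rm dev}\,\sigma_\tau^k:\delta\pi_\tau^k](\Omega\cup\Gamma_{\text{\sc d}})\ \ge\ \mathcal{R}(\alpha_\tau^{k-1},\delta\pi_\tau^k).
\]
Conversely, inserting $\pi=+\tau\delta\pi_\tau^k$ in \eqref{eq:need-d}, the $1$-homogeneity gives $\mathcal{R}(\alpha_\tau^{k-1},2\tau\delta\pi_\tau^k)-\mathcal{R}(\alpha_\tau^{k-1},\tau\delta\pi_\tau^k)=\tau\mathcal{R}(\alpha_\tau^{k-1},\delta\pi_\tau^k)$, whence
\[
\mathcal{R}(\alpha_\tau^{k-1},\delta\pi_\tau^k)\ \ge\ [{\rm dev}\,\sigma_\tau^k:\delta\pi_\tau^k](\Omega\cup\Gamma_{\text{\sc d}}).
\]
Combining the two bounds yields \eqref{eq:discrete-flow-rule}.

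The only non-routine point is checking that the duality bracket $[{\rm dev}\,\sigma_\tau^k:\pi](\Omega\cup\Gamma_{\text{\sc d}})$ is well-defined for our choice of $\pi$. This requires $\sigma_\tau^k\in\Sigma(\Omega)$ (so that the trace $[\sigma_\tau^k\nu_\Gamma]$ makes sense and the measure $[{\rm dev}\,\sigma:\pi]$ from Subsection~\ref{subs:weak} is available) and a plastic strain in $\Pi_{\Gamma_{\text{\sc d}}}(\Omega)$. The first fact follows from Proposition~\ref{prop:disp-strain}, which furnishes ${\rm div}\,\sigma_\tau^k\in L^2(\Omega;\R^d)$ via \eqref{eq:form-disc-equilibrium} together with ${\rm dev}\,\sigma_\tau^k\in L^\infty$ by Kohn--Temam; the second is immediate from the definition of $\pdeor(\cdot)$ applied to the two consecutive iterates. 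Once this setting is in place the scaling argument above closes the proof.
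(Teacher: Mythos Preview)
Your proposal is correct and matches the paper's one-line proof verbatim (choose $\pi=\pm\tau\delta\pi_\tau^k$ in \eqref{eq:need-d} and use the positive $1$-homogeneity of $\mathcal{R}(\alpha_\tau^{k-1},\cdot)$). One minor slip in your last paragraph: ${\rm dev}\,\sigma_\tau^k\in L^\infty$ is not a consequence of Kohn--Temam (their result goes the other direction, deducing $\sigma\in L^\infty$ \emph{from} ${\rm dev}\,\sigma\in L^\infty$) but rather of the pointwise stress constraint $|{\rm dev}\,\sigma_\tau^k|\le\SYLD(\alpha_\tau^{k-1})$, which one obtains by testing \eqref{eq:1eq-p-extra} with triples of the form $(0,-\varphi M,\varphi M)\in\mathscr{A}(0)$ for $\varphi\in C_c^\infty(\Omega)$, $\varphi\ge0$, $M\in\mthree_{\rm dev}$ with $|M|\le1$; in any case this membership in $\Sigma(\Omega)$ is already built into the statement of Lemma~\ref{prop:second-E-L}, so your final paragraph is redundant.
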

\begin{proof}
The assert follows by choosing $\pi=\UUU \tau\delta \pi_{\tau}^k$, \EEE and $\pi=-\tau\delta \pi_{\tau}^k$ in \eqref{eq:need-d}.
\end{proof}

\begin{lemma}
\label{prop:third-E-L}
 For $k\in \{1,\dots{T}/{\tau}\}$, let $(u_{\tau}^k,e_{{\rm el},\tau}^k, \pi_{\tau}^k)$ be the 
minimizing triple of \eqref{eq:min-disp-strain}, and let $\sigma_{\tau}^k$ be the quantity 
defined in \eqref{eq:discrete-flow-rule}. Then, there holds %\COMMENT{MAYBE I CORRECTED IT WRONGLY HERE:}
\begin{align*}\mathcal{R}({\alpha_\tau^{k-1}},\tau\delta &\pi_{\tau}^k+\pi)+
\mathcal{R}(\alpha_{\tau}^{k-\UUU 2},\tau\delta
  \pi_{\tau}^{k-1}-\pi)-\mathcal{R}({\alpha_\tau^{k-1}},\tau\delta
                         \pi_{\tau}^k)\ \
  \\&\qquad-\mathcal{R}(\alpha_{\tau}^{k-\UUU 2},\tau\delta \pi_{\tau}^{k-1})
-\tau[{\rm dev}\,\delta\sigma_{\tau}^k:\pi](\Omega\cup\Gamma_{\text{\sc d}})\geq 0
\end{align*}
for every $\pi\in \mb$ such that there exist $u\in BD(\Omega;\R^d)\cap L^2(\Omega;\R^d)$, and $e\in L^2(\Omega;\mthree_{\rm sym})$ with $(u,e,\pi)\in\pdeor(0)$. 
 \end{lemma}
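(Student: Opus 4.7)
The plan is to obtain the claimed inequality by writing down Lemma \ref{prop:second-E-L} twice, once at level $k$ with test $\pi$ and once at level $k{-}1$ with test $-\pi$, and summing. The telescoping of the stress pairings will then produce precisely $\tau[{\rm dev}\,\delta\sigma_\tau^k:\pi](\Omega\cup\Gamma_{\text{\sc d}})$.

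As a preliminary step, I would check that the admissible class of test fields in Lemma \ref{prop:second-E-L} is stable under sign change: if $(u,e,\pi)\in\pdeor(0)$, then $(-u,-e,-\pi)\in\pdeor(0)$ as well, since both the additive split $e(u)=e+\pi$ and the boundary identification $\pi=-u\odot\nu_\Gamma^{}\mathcal{H}^{d-1}$ on $\Gamma_{\text{\sc d}}$ are linear in $(u,e,\pi)$. This legitimizes the use of $-\pi$ as a test below. I would also note that Proposition \ref{prop:disp-strain} guarantees $\sigma_\tau^j\in\Sigma(\Omega)$ with $[\sigma_\tau^j\nu_\Gamma^{}]=0$ on $\Gamma_{\text{\sc n}}$ for every admissible $j$, so that the duality $[{\rm dev}\,\sigma_\tau^j:\pi](\Omega\cup\Gamma_{\text{\sc d}})$ is well defined via Lemma \ref{prop:integ-parts} (for $j=0$ the discrete flow rule, and hence $\sigma_\tau^0$, is in place thanks to the compatibility condition in \eqref{eq:hp-in-data++}).

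Next, I would invoke Lemma \ref{prop:second-E-L} at level $k$ with the test field $\pi$, giving
\begin{equation*}
\mathcal{R}(\alpha_\tau^{k-1},\tau\delta\pi_{\tau}^k+\pi)-\mathcal{R}(\alpha_\tau^{k-1},\tau\delta \pi_{\tau}^k)-[{\rm dev}\,\sigma_{\tau}^k:\pi](\Omega\cup\Gamma_{\text{\sc d}})\geq 0,
\end{equation*}
and at level $k{-}1$ with the test field $-\pi$, giving
\begin{equation*}
\mathcal{R}(\alpha_\tau^{k-2},\tau\delta\pi_{\tau}^{k-1}-\pi)-\mathcal{R}(\alpha_\tau^{k-2},\tau\delta \pi_{\tau}^{k-1})-[{\rm dev}\,\sigma_{\tau}^{k-1}:(-\pi)](\Omega\cup\Gamma_{\text{\sc d}})\geq 0.
\end{equation*}

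I would then add these two inequalities. To recombine the stress-pairing contributions I rely on the linearity of $\pi\mapsto [{\rm dev}\,\sigma:\pi](\Omega\cup\Gamma_{\text{\sc d}})$ in its second slot, which is immediate from the construction in Subsection \ref{subs:weak}: that pairing is defined through the representative $(u,e,w)\in\pdeor(w)$ of $\pi$ and depends linearly on it, independently of the choice of representative. Hence
\begin{equation*}
-[{\rm dev}\,\sigma_\tau^k:\pi](\Omega\cup\Gamma_{\text{\sc d}})+[{\rm dev}\,\sigma_\tau^{k-1}:\pi](\Omega\cup\Gamma_{\text{\sc d}})=-[{\rm dev}\,(\sigma_\tau^k-\sigma_\tau^{k-1}):\pi](\Omega\cup\Gamma_{\text{\sc d}})=-\tau[{\rm dev}\,\delta\sigma_\tau^k:\pi](\Omega\cup\Gamma_{\text{\sc d}}),
\end{equation*}
and the sum reads exactly as the claim. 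No serious obstacle is anticipated; the only delicate point is the bilinearity/sign-invariance of the stress-strain duality, which is however built into its very definition, so the argument is essentially a one-line combination of two instances of the previous lemma.
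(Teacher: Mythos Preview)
Your proof is correct and follows essentially the same route as the paper: the paper also obtains the result by summing the first-order optimality inequality \eqref{eq:1eq-p-extra} at level $k$ with test $\pi$ and the corresponding inequality at level $k{-}1$ with test $-\pi$ (written there as \eqref{eq:2eq-p-extra}), then invoking Lemma~\ref{prop:integ-parts} and the equilibrium equation. The only cosmetic difference is that you invoke the already-packaged Lemma~\ref{prop:second-E-L} twice, whereas the paper re-derives the pre-integration-by-parts form at level $k{-}1$ before summing; your explicit remark on the case $k=1$ via \eqref{eq:hp-in-data++} is a welcome clarification.
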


\begin{proof}
Considering variations of the form $(u_{\tau}^{k-1},e_{\tau}^{k-1},\pi_{\tau}^{k-1})-\lambda (u,e,\pi)$ 
for $\lambda\geq 0$ and $(u,e,\pi)\in \mathscr{A}(0)$ in
\eqref{eq:min-disp-strain} \UUU at level $i-1$, \EEE the 
convexity of $\mathcal{R}$ in its second variable yields
\begin{align}
\label{eq:2eq-p-extra}\mathcal{R}(\alpha_{\tau}^{k-\UUU 2},\tau\delta
  \pi_{\tau}^{k-\GGG 1}\!-\pi)
-\mathcal{R}(\alpha_{\tau}^{k-1},\tau\delta \pi_{\tau}^{k-1})-\int_{\Omega}\GGG\Big(\EEE\sigma_{\tau}^{k-1}:e
%\, \d x- \int_{\Omega}
+\rho\delta^2 u_{\tau}^{k-1}\cdot u
%\, \d x+\int_{\Omega}
-f_{\tau}^{k-1}\cdot u\GGG\Big)\EEE\, \d x\geq 0,
\end{align}
for every $u\in BD(\Omega;\R^d)\cap L^2(\Omega;\R^d)$, $e\in L^2(\Omega;\mthree_{\rm sym})$, 
and $\pi\in \mb$ such that $(u,e,\pi)\in\pdeor(0)$. The assert follows by summing 
\eqref{eq:1eq-p-extra} and \eqref{eq:2eq-p-extra}, and by applying 
Lemma~\ref{prop:integ-parts}, and \eqref{eq:form-disc-equilibrium}.
\end{proof}

Let now $\underline{u}_{\tau}$, and $\bar{u}_{\tau}$ be the backward- and forward- piecewise constant in-time interpolants associated to the maps $u_{\tau}^k$, namely
\begin{equation}
\label{eq:b-int}
\underline{u}_{\tau}(0):=u_0,\quad \underline{u}_{\tau}(t):=u_{\tau}^{k-1}\quad\text{for every }t\in [(k-1)\tau,k\tau),\,k\in\{1,\dots,{T}/{\tau}\},
\end{equation}
and
\begin{equation}
\label{eq:f-int}
\bar{u}_{\tau}(0):=u_0,\quad \bar{u}_{\tau}(t):=u_{\tau}^k\quad\text{for every }t\in ((k-1)\tau,k\tau],\,k\in\{1,\dots,{T}/{\tau}\}.
\end{equation}
Denote by $u_{\tau}$ the associated piecewise affine in-time interpolant, that is
\begin{equation}
\label{eq:aff-int}
{u}_{\tau}(0):=u_0,\quad u_{\tau}(t):=\frac{(t-(k{-}1)\tau)}{\tau}u_{\tau}^k
+\Big(1-\frac{(t-(k{-}1)\tau)}{\tau}\Big)u_{\tau}^{k-1},
\end{equation}
for every $t\in ((k-1)\tau,k\tau],\,k\in\{1,\dots,{T}/{\tau}\}$, and let finally $\widetilde{u}_{\tau}$ be the piecewise quadratic interpolant satisfying $\widetilde{u}(k\tau)=u_{\tau}^k$, and
$$\DDT{\widetilde{u}}_{\tau}(t)=\delta^2 u_{\tau}^k\quad\text{for every }t\in ((k-1)\tau,k\tau],\,k\in\{1,\dots,{T}/{\tau}\}.$$

Let $\underline{\alpha}_{\tau},\bar{\pi}_{\tau},\bar{e}_{{\rm el},\tau}, \bar{\alpha}_{\tau},{\pi}_{\tau},e_{\tau}$, and ${\alpha}_{\tau}$ be defined analogously. The following proposition provides a first uniform estimate for the above quantities.

\begin{proposition}[Discrete energy inequality]
\label{prop:en-ineq1}
Under assumptions %\eqref{eq:hp-in-data}--\eqref{eq:hp-ud}
\eqref{eq:hp}, %for every $t\in [0,T]$  %\COMMENT{EVEN NOT TRUE -- ONLY MESH POINTS - BUT $t=T$ SUFFICES!!??}
the following energy inequality holds true
\begin{align}
&\nonumber\int_{\Omega}\frac\rho2|\DT{u}_{\tau}(\GGG T \EEE)|^2\,\d x
+\frac\tau2\int_0^{\GGG T \EEE}\!\!\int_{\Omega}\rho|\DDT{\widetilde{u}}_{\tau}|^2\,\d x\,\d s
+\int_\tau^{\GGG T \EEE}\int_{\Omega}\rho\DT{u}_{\tau}(\cdot-\tau)\cdot\DDT{\widetilde{u}}_{{\text{\sc d},\tau}}\,\d x\,\d s\\
&\nonumber\qquad +D_{\mathcal{R}}(\alpha_{\tau};\pi_{\tau};0,\GGG T \EEE)+\int_{\Omega}\GGG\Big(\EEE\frac12\C({\underline{\alpha}}_{\tau}(\GGG T \EEE))\bar{e}_{{\rm el},\tau}(\GGG T \EEE):\bar{e}_{{\rm el},\tau}(\GGG T \EEE)-\phi(\alpha_{\tau}(\GGG T \EEE))\,\d x+\frac{\UUU \kappa}{p}|\nabla \alpha_{\tau}(\GGG T \EEE)|^p\GGG\Big)\EEE\,\d x\\
&\nonumber\qquad%+\frac{\tau}{2}\int_0^t\!\!\int_{\Omega}\C(\COL{\underline{\alpha}}_{\tau})\DT{e}_{{\rm
                %el},\tau}:\DT{e}_{{\rm el},\tau}\,\d x\,\d s
+\int_0^{\GGG T \EEE}\!\!\int_{\Omega}\D({\underline{\alpha}}_{\tau})\DT{e}_{{\rm el},\tau}:\DT{e}_{{\rm el},\tau}\,\d x\,\d s+\int_{\Omega}\eta \DT{\alpha}_{\tau}(\GGG T \EEE)^2\,\d
  x\\
&\nonumber \quad\leq \int_{\Omega}\GGG\Big(\EEE\frac\rho2 v_0^2+\rho
  \DT{u}_{\tau}(\GGG T \EEE)\cdot \DT{u}_{{\text{\sc d}},\tau}(t) + \rho v_0\cdot \delta u_{{\text{\sc d}},\tau}^1\GGG\Big)\EEE\, \d x\\
&\nonumber \qquad
  +\int_{\Omega}\frac12\C(\alpha_0)(e(u_0)-\pi_0):(e(u_0)-\pi_0)
  -\phi(\alpha_0)\,\d x+ \frac{\UUU \kappa}{p}|\nabla \alpha_0|^p\,\d
  x\nonumber\\
&\qquad+\int_0^{\GGG T \EEE}\!\!\int_{\Omega}\GGG\Big(\EEE\C(\underline{\alpha}_{\tau})\bar{e}_{{\rm el},\tau}:e(\DT{u}_{{\text{\sc d}},\tau})+\D(\underline{\alpha}_{\tau})\DT{e}_{{\rm el},\tau}:e(\DT{u}_{{\text{\sc d}},\tau})+\bar{f}_{\tau}\cdot(\DT{u}_{\tau}-\DT{u}_{{\text{\sc d}},\tau})\GGG\Big)\EEE\,\d x\,\d s
.\label{eq:together1bis}
\end{align}
\end{proposition}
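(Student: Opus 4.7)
The plan is to derive two partial discrete energy relations---one from the mechanical subsystem \eqref{eq:form-disc-equilibrium}--\eqref{eq:discrete-flow-rule} and one from the damage equation \eqref{eq:form-disc-damage}---and then to combine them via the chord-slope \emph{switch identity} built into the definitions of $\C^\circ$ and $\phi^\circ$. At each time level $k$, I test \eqref{eq:form-disc-equilibrium} against $\tau(\delta u_\tau^k{-}\delta u_{\text{\sc d},\tau}^k)$, noting that the triple $(\tau(\delta u_\tau^k{-}\delta u_{\text{\sc d},\tau}^k),\, \tau(\delta e_{{\rm el},\tau}^k{-}e(\delta u_{\text{\sc d},\tau}^k)),\,\tau\delta\pi_\tau^k)$ belongs to $\pdeor(0)$. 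Applying Lemma \ref{prop:integ-parts} to $\sigma_\tau^k$ (which satisfies $[\sigma_\tau^k\nu_\Gamma]=0$ on $\Gamma_{\text{\sc n}}$ by Proposition \ref{prop:disp-strain}) converts the divergence term into a duality pairing, which the discrete flow rule \eqref{eq:discrete-flow-rule} identifies with $\tau\mathcal{R}(\alpha_\tau^{k-1},\delta\pi_\tau^k)$. The standard convexity inequalities
\begin{align*}
\tau\rho\,\delta^2 u_\tau^k\cdot\delta u_\tau^k &\ge \tfrac{\rho}{2}\big(|\delta u_\tau^k|^2{-}|\delta u_\tau^{k-1}|^2\big)+\tfrac{\rho}{2}|\delta u_\tau^k{-}\delta u_\tau^{k-1}|^2, \\
\tau\,\C(\alpha_\tau^{k-1})e_{{\rm el},\tau}^k{:}\delta e_{{\rm el},\tau}^k &\ge \tfrac12\C(\alpha_\tau^{k-1})\big(e_{{\rm el},\tau}^k{:}e_{{\rm el},\tau}^k - e_{{\rm el},\tau}^{k-1}{:}e_{{\rm el},\tau}^{k-1}\big)
\end{align*}
put the kinetic and elastic contributions into telescoping form, while the viscous term $\tau\,\D(\alpha_\tau^{k-1})\delta e_{{\rm el},\tau}^k{:}\delta e_{{\rm el},\tau}^k$ is already in the desired form.

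Next, I test \eqref{eq:form-disc-damage} against $\delta\alpha_\tau^k\le 0$, using that $\partial\zeta(\delta\alpha_\tau^k)\ni\eta\,\delta\alpha_\tau^k$ in this regime. Multiplying by $\tau$ and exploiting the defining identities $\C^\circ(\alpha_\tau^k,\alpha_\tau^{k-1})(\alpha_\tau^k{-}\alpha_\tau^{k-1})=\C(\alpha_\tau^k){-}\C(\alpha_\tau^{k-1})$ and $\phi^\circ(\alpha_\tau^k,\alpha_\tau^{k-1})(\alpha_\tau^k{-}\alpha_\tau^{k-1})=\phi(\alpha_\tau^k){-}\phi(\alpha_\tau^{k-1})$, together with the convexity of $r\mapsto|r|^p/p$ applied after integration by parts in the gradient term, yields
\begin{align*}
\tau\!\int_\Omega\!\eta(\delta\alpha_\tau^k)^2\,\d x &+ \tfrac12\!\int_\Omega[\C(\alpha_\tau^k){-}\C(\alpha_\tau^{k-1})]e_{{\rm el},\tau}^k{:}e_{{\rm el},\tau}^k\,\d x \\
&+ \tfrac{\kappa}{p}\!\int_\Omega\big(|\nabla\alpha_\tau^k|^p{-}|\nabla\alpha_\tau^{k-1}|^p\big)\,\d x \le \int_\Omega[\phi(\alpha_\tau^k){-}\phi(\alpha_\tau^{k-1})]\,\d x.
\end{align*}
Summing the mechanical identity and the damage inequality over $k=1,\ldots,T/\tau$, the cross term $\tfrac12[\C(\alpha_\tau^k){-}\C(\alpha_\tau^{k-1})]e_{{\rm el},\tau}^k{:}e_{{\rm el},\tau}^k$ from the damage inequality combines with the $\C(\alpha_\tau^{k-1})$-telescoping from the mechanical relation via the switch identity to deliver the stored-energy difference at the terminal time. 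The plastic-dissipation sum $\sum_k\mathcal{R}(\alpha_\tau^{k-1},\pi_\tau^k{-}\pi_\tau^{k-1})$ majorises $D_\mathcal{R}(\alpha_\tau;\pi_\tau;0,T)$ because $\alpha_\tau(s)\le\alpha_\tau^{k-1}$ for every $s\in[(k-1)\tau,k\tau]$ by unidirectionality and $\SYLD$ is non-decreasing.

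Finally, an Abel summation (discrete integration by parts, exploiting the prescribed value $u_{\text{\sc d},\tau}^{-1}=u_{\text{\sc d}}(0){-}\tau\DT u_{\text{\sc d}}(0)$) transforms the Dirichlet forcing $\sum_k\tau\int_\Omega\rho\,\delta^2 u_\tau^k\cdot\delta u_{\text{\sc d},\tau}^k\,\d x$ into the boundary contributions at $t=0$ and $t=T$ together with the integral $\int_\tau^T\!\!\int_\Omega\rho\,\DT u_\tau(\cdot{-}\tau)\cdot\DDT{\widetilde u}_{\text{\sc d},\tau}\,\d x\,\d s$; the remaining loading and stress-at-the-boundary terms follow directly, and conversion to the interpolants \eqref{eq:b-int}--\eqref{eq:aff-int} produces \eqref{eq:together1bis}. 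The principal obstacle is the precise combination of the mechanical and damage contributions via the switch identity: the chord-slope quantities $\C^\circ$ and $\phi^\circ$ in \eqref{eq:form-disc-damage} are engineered precisely so that the otherwise uncontrolled $\alpha$-dependent cross-term cancels, and careful bookkeeping of the $\alpha$-indices is then needed to telescope the resulting sums into the form \eqref{eq:together1bis}.
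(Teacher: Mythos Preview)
Your proposal is correct and follows essentially the same route as the paper: test \eqref{eq:form-disc-equilibrium} against $\delta u_\tau^k-\delta u_{\text{\sc d},\tau}^k$, use Lemma~\ref{prop:integ-parts} together with the discrete flow rule (Corollary~\ref{cor:f-r}) to extract $\mathcal{R}(\alpha_\tau^{k-1},\delta\pi_\tau^k)$, test \eqref{eq:form-disc-damage} against $\delta\alpha_\tau^k$, and then combine the two via the chord identity $\C^\circ(\alpha_\tau^k,\alpha_\tau^{k-1})\delta\alpha_\tau^k=\delta[\C(\alpha_\tau^k)]$ (the paper's \eqref{eq:compute-el2}) so that the $\alpha$-dependent elastic cross-term telescopes. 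The only cosmetic differences are that the paper records the kinetic identity \eqref{eq:add1-d} as an equality rather than the (equivalent) convexity inequality you write, and that the paper actually proves the plastic-dissipation sum \emph{equals} $D_{\mathcal{R}}(\alpha_\tau;\pi_\tau;0,T)$ in \eqref{eq:together3} rather than merely majorising it; your weaker majorisation claim is of course sufficient for \eqref{eq:together1bis}.
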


\begin{proof}
Fix $k\in\{1,\dots,{T}/{\tau}\}$. Testing \eqref{eq:form-disc-damage}
against $\delta \alpha_{\tau}^k$, we deduce the \UUU equality \EEE
\begin{align}
&\int_{\Omega}\eta |\delta \alpha_{\tau}^k|^2\,\d x+
  \int_{\Omega}\frac12\C^\circ(\alpha_{\tau}^k,\alpha_{\tau}^{k-1})\delta\alpha_{\tau}^k
  e^k_{{\rm el},\tau}:e^k_{{\rm el},\tau}\, \d x\nonumber\\
&\qquad +\int_{\Omega}\Big(-\UUU \phi^\circ(\alpha_{\tau}^k,
  \alpha_{\tau}^{k-1}) \EEE \delta \alpha_{\tau}^k+|\nabla
  \alpha_{\tau}^k|^{p-2}\nabla \alpha_{\tau}^k\cdot \nabla (\delta
  \alpha_{\tau}^k)\Big)\, \d x\UUU = \EEE 0. \label{eq:equality-damage1}
\end{align}
\UUU Taking \EEE $\delta u_{\tau}^k\UUU  -\delta u^k_{\text{\sc d},{\tau}}\EEE$ as test function in \eqref{eq:form-disc-equilibrium}, we have
$$ \int_{\Omega}\rho\delta^2 u_{\tau}^k \cdot (\delta u_{\tau}^k-\UUU  \delta
u^k_{\text{\sc d},{\tau}}\EEE)\,\d x-\int_{\Omega}{\rm
  div}\,\sigma_{\tau}^k\cdot (\delta u_{\tau}^k - \UUU  \delta u^k_{\text{\sc d},{\tau}}\EEE)\,\d x=\int_{\Omega}f_{\tau}^k\cdot (\delta u_{\tau}^k - \UUU  \delta u^k_{\text{\sc d},{\tau}}\EEE)\,\d x,$$
which by Lemma \ref{prop:integ-parts}, and by the fact that $[\sigma_{\tau}^k \nu_\Gamma^{}]=0$ on $\Gamma_{\text{\sc n}}$ (see Proposition \ref{prop:disp-strain}), yields
\begin{align*}
& \int_{\Omega}\rho\delta^2 u_{\tau}^k\cdot (\delta u_{\tau}^k-\delta u^k_{\text{\sc d},{\tau}})\,\d x+[{\rm dev}\,\sigma_{\tau}^k:\delta \pi_{\tau}^k](\Omega\cup\Gamma_{\text{\sc d}})+\int_{\Omega}\sigma_{\tau}^k:(\delta e_{{\rm el},\tau}^k-e(\delta u^k_{\text{\sc d},{\tau}}))\,\d x\\
&\quad =\int_{\Omega}f^k_{\tau}\cdot (\delta u_{\tau}^k-\delta u^k_{\text{\sc d},{\tau}})\,\d x.
\end{align*}
In view of Corollary \ref{cor:f-r}, we obtain 
\begin{align}\nonumber
\int_{\Omega}\rho\delta^2 u_{\tau}^k\cdot(\delta u_{\tau}^k-\delta u^k_{\text{\sc d},{\tau}})\,\d x
+\mathcal{R}({\alpha_\tau^{k-1}},\delta \pi_{\tau}^k)
+\int_{\Omega}\sigma_{\tau}^k:(\delta e_{{\rm el},\tau}^k-e(\delta u^k_{\text{\sc d},{\tau}}))\,\d x
\\\quad 
= \int_{\Omega}f^k_{\tau}\cdot (\delta u_{\tau}^k-\delta u^k_{\text{\sc d},{\tau}})\,\d x.
\label{eq:inequality-disp1}
\end{align}
For $n\in\{1,\dots,{T}/{\tau}\}$, a discrete integration by parts in time yields
\begin{align}
&
%\nonumber
\tau\sum_{k=1}^n
%\int_{\Omega}
\rho\delta^2 u_{\tau}^k\cdot \delta u_{\tau}^k
%\, \d x
= \sum_{k=1}^n
%\int_{\Omega}
\rho\big((\delta u_{\tau}^k)^2-\delta u_{\tau}^k\cdot \delta u_{\tau}^{k-1}\big)
%\, \d x
%\\&
\label{eq:add1-d}
%\quad
=\frac{1}{2} 
%\int_{\Omega}
\rho(\delta u_{\tau}^n)^2
%\, \d x
-
\frac{1}{2}
%\int_{\Omega}
\rho v_0^2
%\, \d x
+\frac{\tau^2}{2}\sum_{k=1}^n 
%\int_{\Omega}
\rho(\delta^2 u_{\tau}^k)^2
%\, \d x.
\end{align}
a.e.\ on $\Omega$. Analogously, we deduce that
\begin{align}
%&\nonumber
-\tau\sum_{k=1}^n
%\int_{\Omega}
\rho\delta^2 u_{\tau}^k\cdot \delta u_{\text{\sc d},\tau}^k
%\, \d x
=
%\int_{\Omega}
\tau \sum_{k=1}^{n}
%\int_{\Omega}
\rho\delta u_{\tau}^{k-1}\cdot \delta^2 u_{\text{\sc d},{\tau}}^{k}
%\, \d x
-
%\int_{\Omega}
\rho\delta u_{\tau}^n\cdot \delta u_{\text{\sc d},{\tau}}^n
%\, \d x
%\\&
\label{eq:add2-d}
%\quad
- 
%\int_{\Omega}
\rho v_0\cdot \delta u_{\text{\sc d},\tau}^0
%\, \d x.
\end{align}
a.e.\ on $\Omega$.
Additionally, by the monotonicity of $\C$ in the L\"owner ordering, and \eqref{eq:def-D}, we have
\begin{align}
&\nonumber\tau \sum_{k=1}^n \int_{\Omega}\sigma_{\tau}^k:\delta e_{{\rm el},\tau}^k\,\d x=\tau \sum_{k=1}^n
\int_{\Omega}\C(\alpha_{\tau}^{k-1})e_{{\rm el},\tau}^k:\delta e_{{\rm el},\tau}^k\,\dx+\tau \sum_{k=1}^n
\int_{\Omega}\D(\alpha_{\tau}^{k-1})\delta e_{{\rm el},\tau}^k:\delta e_{{\rm el},\tau}^k\,\dx\\
&\nonumber\quad\geq \int_{\Omega} \frac12\C(\alpha_{\tau}^{n})e_{{\rm el},\tau}^n:e_{{\rm el},\tau}^n\,\dx-\int_{\Omega} \frac12\C(\alpha_{0})(e(u_0)-\pi_0):(e(u_0)-\pi_0)\,\dx\\
&\label{eq:compute-el}\quad-\tau\sum_{k=1}^n\int_{\Omega}\frac12\delta [\C(\alpha_{\tau}^k)]e^k_{{\rm el},\tau}:e^k_{{\rm el},\tau}\,\dx+\tau \sum_{k=1}^n
\int_{\Omega}\D(\alpha_{\tau}^{k})\delta e_{{\rm el},\tau}^k:\delta e_{{\rm el},\tau}^k\,\dx,%\\
%&\quad\nonumber+\frac{\tau^2}{2}\sum_{k=1}^n\int_{\Omega}\C(\alpha_{\tau}^k)\delta e_{{\rm el},\tau}^k:\delta e_{{\rm el},\tau}^k\,\d x,
\end{align}
and
\begin{equation}
\label{eq:compute-el2}
\frac{\tau}{2}\sum_{k=1}^n\int_{\Omega}\lineunder{\big(\C^\circ(\alpha_{\tau}^k,\alpha_{\tau}^{k-1})\delta\alpha_{\tau}^k-\delta
[\C(\alpha_{\tau}^k)]\big)}{\UUU =0}e^k_{{\rm el},\tau}:e^k_{{\rm el},\tau}\,\d
x\UUU = \EEE 0.%\COMMENT{ISN'T\ IT\ JUST\ ZERO?}
\end{equation}

Thus, multiplying \eqref{eq:equality-damage1} and \eqref{eq:inequality-disp1} 
by $\tau$, and summing for 
$k=1,\dots,T/\tau$, %\COMMENT{HERE $n=T/\tau$ SUFFICES!!?? SO I CHANGED IT} 
%with $n\in\{1,\dots,{T}/{\tau}\}$, 
in view of \eqref{eq:add1-d}, \eqref{eq:add2-d}, \eqref{eq:compute-el}, and 
\eqref{eq:compute-el2} we deduce 
\begin{align}
&\nonumber\int_{\Omega}\frac\rho2|\DT{u}_{\tau}(T)|^2\,\d x+\frac{\tau}{2}\int_0^T\!\!\int_{\Omega}\rho|\DDT{\widetilde{u}}_{\tau}|^2\,\d x\,\d t
+\int_\tau^{T}\!\!\int_{\Omega}\rho \DT{u}_{\tau}(\cdot-\tau)\cdot\DDT{\widetilde{u}}_{{\text{\sc d},\tau}}\,\d x\,\d t\\
&\nonumber\qquad +\tau\sum_{k=1}^{T/\tau}
  \mathcal{R}({\alpha_\tau^{k-1}},\delta
  \pi_{\tau}^k)+\frac12\int_{\Omega}\C(\UUU {\overline{\alpha}}_{\tau}\GGG(T)\EEE)\bar{e}_{{\rm el},\tau}(\GGG T\EEE):\bar{e}_{{\rm el},\tau}(T)\,\d x\\
&\nonumber\qquad
+\int_0^T\!\!\int_{\Omega}\D({\GGG\overline{\alpha}\EEE}_{\tau})\DT{e}_{{\rm el},\tau}:\DT{e}_{{\rm el},\tau}\,\d x\,\d t+% \frac{\tau}{2}\int_0^T\!\!\int_{\Omega}\C(\COL{\underline{\alpha}}_{\tau})\DT{e}_{{\rm el},\tau}:\DT{e}_{{\rm el},\tau}\,\d x\,\d t+
  \int_{\Omega}\GGG\Big(\EEE\eta \DT{\alpha}_{\tau}(T)^2-\phi(\alpha_{\tau}(T))+\frac{\UUU \kappa}{p}|\nabla \alpha_{\tau}(T)|^p\GGG\Big)\EEE\,\d x\\
&\nonumber \quad\leq \int_{\Omega}\GGG\Big(\EEE\frac\rho2 v_0^2+\rho \DT{u}_{\tau}(T)\cdot \DT{u}_{{\text{\sc d}},\tau}(T)+\rho v_0\cdot \delta u_{{\text{\sc d}},\tau}^0\GGG\Big)\EEE\, \d x\\
&\nonumber \qquad
  +\int_{\Omega}\GGG\Big(\frac12\EEE\C(\alpha_0)(e(u_0)-\pi_0):(e(u_0)-\pi_0) - \phi(\alpha_0)
%\,\d x
+
%\int_{\Omega}
\frac{\UUU \kappa}{p}|\nabla\alpha_0|^p\GGG\Big)\EEE\,\d x\\
&\qquad\label{eq:together1}+\int_0^{\GGG T\EEE}\!\!\int_{\Omega}\C(\underline{\alpha}_{\tau})\bar{e}_{{\rm
  el},\tau}:e(\DT{u}_{{\text{\sc d}},\tau})\,\d x\,\d s\nonumber\\
&\qquad+\int_0^{\GGG T \EEE}\!\!\int_{\Omega}\D(\underline{\alpha}_{\tau})\DT{e}_{{\rm el},\tau}:e(\DT{u}_{{\text{\sc d}},\tau})
%\,\d x\,\d s
+
%\int_0^t \int_{\Omega}
\bar{f}_{\tau}\cdot (\DT{u}_{\tau}-\DT{u}_{{\text{\sc d}},\tau})\,\d x\,\d s
.
\end{align}
%for every $t\in ((n-1)\tau,n\tau]$. \COMMENT{NOT TRUE - ONLY MESH POINT - NOW $T$ SUFFICES!} 
Additionally, recalling definition 
\eqref{eq:def-a-R-diss}, and observing that $\pi_{\tau}$ jumps exactly only in 
the points $\tau k$, $k\in\{1,\dots,{T}/{\tau}\}$, by the 
monotonicity of the maps $\alpha_{\tau}$ (see Subsection \ref{sub:diss}), we have
\begin{equation}
\label{eq:together3}
D_{\mathcal{R}}(\alpha_{\tau};\pi_{\tau};0,\GGG T\EEE)=\tau\sum_{k=1}^{\GGG T/\tau} \mathcal{R}({\alpha_\tau^{k-1}},\delta \pi_{\tau}^k).%\qquad\text{ with }\ \ \ t\in ((n-1)\tau,n\tau].
\end{equation}
This concludes the proof of the energy inequality \UUU \eq{eq:together1bis} \EEE and of the proposition.
\end{proof}

Owing to the previous discrete energy inequality, we are now in a position to deduce some first a-priori estimates for the piecewise affine interpolants.

\begin{proposition}[A-priori estimates]
\label{prop:unif-estimates}
Under assumptions \eqref{eq:hp},
%\eqref{eq:hp-in-data}--\eqref{eq:hp-ud}, 
\UUU for $\tau$ small enough \EEE there exists a constant $C$, dependent only on the initial conditions, on $f$,
and on $u_{\text{\sc d}}$, such that
\begin{align}
&\nonumber
\|\alpha_{\tau}\|_{H^1(0,T;L^2(\Omega))}+\|\alpha_{\tau}\|_{L^{\infty}(0,T;W^{1,p}(\Omega))}+\|e_{{\rm el},\tau}\|_{H^1(0,T;L^2(\Omega;\mthree_{\rm sym}))}\\
&\nonumber\quad+\|u_{\tau}\|_{W^{1,\infty}(0,T;L^2(\Omega;\R^d))}+\|u_{\tau}\|_{BV(0,T;BD(\Omega;\R^d))}+\|\pi_{\tau}\|_{BV(0,T;\mb)}\\
&\quad+\|\underline{\alpha}_{\tau}\|_{L^{\infty}((0,T)\times\Omega)}+\|\bar{\alpha}_{\tau}\|_{L^{\infty}((0,T)\times\Omega)}+\|\bar{e}_{{\rm el},\tau}\|_{L^{\infty}(0,T;L^2(\Omega;\mthree_{\rm sym}))}\leq C.
\label{eq:unif-estimate1}\end{align} 
\end{proposition}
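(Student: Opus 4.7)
The plan is to combine the discrete energy inequality \eqref{eq:together1bis}, applied at an arbitrary discrete time $N\tau\in(0,T]$ in place of $T$, with Young's and the discrete Gronwall inequality, and then to deduce the remaining norms from the kinematic decomposition $e(u_\tau^k)=e_{\rm el,\tau}^k+\pi_\tau^k$ together with Korn's inequality.

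First, I would absorb the cross terms on the right-hand side of \eqref{eq:together1bis} into the LHS. The kinetic coupling $\rho\DT u_\tau(N\tau)\cdot\DT u_{\text{\sc d},\tau}(N\tau)$ yields, via Young's inequality, a factor $\tfrac14\rho|\DT u_\tau(N\tau)|^2$ that is moved to the LHS (which contains $\tfrac\rho2|\DT u_\tau(N\tau)|^2$). The viscous coupling $\int \mathbb{D}(\underline\alpha_\tau)\DT e_{\rm el,\tau}:e(\DT u_{\text{\sc d},\tau})$ is treated analogously, using that $0\le\underline\alpha_\tau\le 1$ and that $\mathbb{C}\in C^1$ (hence $\mathbb{D}=\mathbb{D}_0+\chi\mathbb{C}$ is uniformly bounded on $[0,1]$), so that one half of the viscous dissipation $\int\mathbb{D}_0\DT e_{\rm el,\tau}:\DT e_{\rm el,\tau}$ on the LHS (positive definite by \eqref{eq:def-D}) absorbs the resulting quadratic contribution. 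The force term $\bar f_\tau\cdot(\DT u_\tau-\DT u_{\text{\sc d},\tau})$ is split by Young, leaving $\int_0^{N\tau}\|\DT u_\tau(s)\|_{L^2}^2\,\d s$ to be handled by Gronwall. The mixed inertial term $\int_\tau^{N\tau}\rho\DT u_\tau(\cdot-\tau)\cdot\DDT{\widetilde u}_{\text{\sc d},\tau}$ and the constants $\rho v_0\cdot\delta u^1_{\text{\sc d},\tau}$ are controlled uniformly via \eqref{eq:hp}.

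The main obstacle is that the elastic stored energy $\tfrac12\mathbb{C}(\underline\alpha_\tau(N\tau))\bar e_{\rm el,\tau}(N\tau):\bar e_{\rm el,\tau}(N\tau)$ only controls a $\mathbb{C}$-weighted norm of $\bar e_{\rm el,\tau}$; since $\mathbb{C}$ is merely positive semidefinite, the cross term $\int\mathbb{C}(\underline\alpha_\tau)\bar e_{\rm el,\tau}:e(\DT u_{\text{\sc d},\tau})$ cannot be absorbed directly. To close the estimate, I would use the uniform bound $\|\mathbb{C}(\underline\alpha_\tau)\|_{L^\infty}\le C$ (thanks to $0\le\underline\alpha_\tau\le1$) and Young's inequality to dominate this cross term by $C\int_0^{N\tau}\!\|\bar e_{\rm el,\tau}(s)\|_{L^2}^2\,\d s$ plus data, and then feed in the representation
\[
\|\bar e_{\rm el,\tau}(s)\|_{L^2}^2 \le 2\|e(u_0)-\pi_0\|_{L^2}^2 + 2T\int_0^s\|\DT e_{\rm el,\tau}(r)\|_{L^2}^2\,\d r,
\]
so that the remaining half of the viscous dissipation on the LHS provides the control needed to invoke the discrete Gronwall inequality. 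This yields uniform bounds on $\|\DT u_\tau\|_{L^\infty(0,T;L^2)}$, $\|\DT e_{\rm el,\tau}\|_{L^2(0,T;L^2)}$, $\|\nabla\alpha_\tau\|_{L^\infty(0,T;L^p)}$ (using that $\phi$ is continuous, hence bounded on $[0,1]$), $\|\DT\alpha_\tau\|_{L^2(0,T;L^2)}$ (via \eqref{eq:eta}), and the total dissipation $D_{\mathcal{R}}(\alpha_\tau;\pi_\tau;0,T)$.

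Finally, I would deduce the remaining estimates. The bound $\bar e_{\rm el,\tau}\in L^\infty(0,T;L^2)$ follows at once from $\bar e_{\rm el,\tau}(t)-e_{\rm el,\tau}(0)=\int_0^{\tau\lceil t/\tau\rceil}\DT e_{\rm el,\tau}\,\d s$, the initial datum regularity, and the previous $L^2$-bound on $\DT e_{\rm el,\tau}$. The bound $\pi_\tau\in BV(0,T;\mb)$ stems from $D_{\mathcal{R}}$ together with the positive lower bound $\sigma_{\rm YLD}(\alpha)\ge\min_{[0,1]}\sigma_{\rm YLD}>0$, valid by continuity of $\SYLD$ on the compact $[0,1]$. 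The bound $u_\tau\in W^{1,\infty}(0,T;L^2)$ is immediate from $\DT u_\tau\in L^\infty(0,T;L^2)$ and $u_0\in L^2$, while $u_\tau\in BV(0,T;BD(\Omega;\R^d))$ follows from the kinematic decomposition $e(\delta u_\tau^k)=\delta e_{\rm el,\tau}^k+\delta\pi_\tau^k$, Korn's inequality \eqref{eq:korn} applied to increments (with the Dirichlet trace of $\delta u_\tau^k$ on $\Gamma_{\text{\sc d}}$ controlled through the relaxed condition \eqref{eq:relaxed-bc} by $\|\delta\pi_\tau^k\|_\mb$ and the regularity of $u_{\text{\sc d}}$ in \eqref{eq:hp-f}), and the already established bounds on $e_{\rm el,\tau}$ and $\pi_\tau$. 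The pointwise inequalities $0\le\underline\alpha_\tau,\bar\alpha_\tau\le1$ finally come directly from Lemma~\ref{lemma:damage1}.
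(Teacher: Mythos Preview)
Your proposal is correct and follows essentially the same approach as the paper: the paper's proof is a one-liner stating that the estimate follows from the discrete energy inequality (Proposition~\ref{prop:en-ineq1}), the regularity of $f$ and $u_{\text{\sc d}}$, and H\"older's and discrete Gronwall's inequalities for $\tau$ small enough. You have simply fleshed out the details that the paper leaves implicit, including the absorption of cross terms via Young's inequality, the handling of the merely semidefinite $\mathbb{C}$ through the time-integral representation of $\bar e_{\rm el,\tau}$, and the derivation of the $BV(0,T;BD)$ and $\mb$ bounds from the kinematic decomposition and the positive lower bound on $\SYLD$.
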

\begin{proof}
The assert follows by Proposition \ref{prop:en-ineq1}, by the
regularity of the applied force $f$ and of the boundary datum
$u_{\text{\sc d}}$, and by applying H\"older's and \UUU discrete
Gronwall's  inequalities, \UUU for $\tau$ small enough. \EEE
\end{proof}
We proceed by performing at the discrete level the higher-order test with the strategy
 formally sketched in Subsection \ref{subs:higher-order}. %\COMMENT{WE SHOULD SAY HOW IT DIFFERS FROM $\text{\cite[Lemma 3.3]{LaOrSu10ESRM}}$ AND WHY WE ARE DOING IT SO MUCH IN DETAILS}

\begin{proposition}[Second a-priori estimates]
\label{prop:higher-order}
Under assumptions %$\eqref{eq:hp-in-data}--\eqref{eq:hp-ud}
\eqref{eq:hp}, \UUU for $\tau$ small enough \EEE there exists a constant $C$, dependent only on the initial conditions, on $f$, and on $u_{\text{\sc d}}$, such that
\begin{align*}
&\|\widetilde{u}_{\tau}\|_{H^2(0,T;L^2(\Omega;\R^d))}^{}
+\|{u}_{\tau}\|_{W^{1,\infty}(0,T;BD(\Omega;\R^d))}^{}\\
&\qquad\qquad+\|\pi_{\tau}\|_{W^{1,\infty}(0,T;\mb)}^{}
+\|e_{{\rm el},\tau}\|_{W^{1,\infty}(0,T;L^2(\Omega;\mthree_{\rm sym}))}^{}\leq C.
\end{align*}
\end{proposition}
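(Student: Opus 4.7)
The plan is to reproduce at the time-discrete level the formal higher-order test performed in Subsection \ref{subs:higher-order}. Mirroring \eqref{eq:def-w-test}, I introduce the augmented discrete quantities $w_\tau^k := u_\tau^k + \taur\,\delta u_\tau^k$, $\varepsilon_{{\rm el},\tau}^k := e_{{\rm el},\tau}^k + \taur\,\delta e_{{\rm el},\tau}^k$, and $\varpi_\tau^k := \pi_\tau^k + \taur\,\delta\pi_\tau^k$, together with the corresponding boundary quantity $w_{{\text{\sc d}},\tau}^k := u_{{\text{\sc d}},\tau}^k + \taur\,\delta u_{{\text{\sc d}},\tau}^k$. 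The structural assumption \eqref{eq:def-D} factors the discrete stress as $\sigma_\tau^k = \C(\alpha_\tau^{k-1})\,\varepsilon_{{\rm el},\tau}^k + \D_0\,\delta e_{{\rm el},\tau}^k$, and the discrete momentum equation \eqref{eq:form-disc-equilibrium} rewrites as $\rho\,(\delta w_\tau^k - \delta u_\tau^k)/\taur = {\rm div}\,\sigma_\tau^k + f_\tau^k$. I test this identity against $\delta w_\tau^k - \delta w_{{\text{\sc d}},\tau}^k$, using that $(\delta w_\tau^k,\delta\varepsilon_{{\rm el},\tau}^k,\delta\varpi_\tau^k)\in\pdeor(\delta w_{{\text{\sc d}},\tau}^k)$ and Lemma \ref{prop:integ-parts} to convert the divergence term into a pairing featuring $[{\rm dev}\,\sigma_\tau^k : \delta\varpi_\tau^k](\Omega\cup\Gamma_{\text{\sc d}})$.

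Multiplying by $\tau$ and summing for $k=1,\ldots,N$, I analyse the individual contributions in the style of Proposition \ref{prop:en-ineq1}. The kinetic part is rearranged by discrete Abel summation to yield $\tau\sum_k \rho\,|\delta^2 u_\tau^k|^2$, which encodes the $H^2$-in-time bound for $\widetilde u_\tau$, together with initial and final kinetic-energy contributions controlled via \eqref{eq:hp}; exactly as in \eqref{eq:compute-el}--\eqref{eq:compute-el2}, the L\"owner monotonicity of $\C$ combined with the unidirectionality $\delta\alpha_\tau^k\le 0$ lets me bound $\tau\sum_k \int_\Omega \C(\alpha_\tau^{k-1})\,\varepsilon_{{\rm el},\tau}^k:\delta\varepsilon_{{\rm el},\tau}^k$ from below by $\tfrac12\int_\Omega \C(\alpha_\tau^N)\,\varepsilon_{{\rm el},\tau}^N:\varepsilon_{{\rm el},\tau}^N$ minus its initial counterpart; and the $\D_0$-coercive part $\tau\sum_k \int_\Omega \D_0\,\delta e_{{\rm el},\tau}^k:\delta\varepsilon_{{\rm el},\tau}^k$ produces both the $L^2$-in-time bound for $\delta e_{{\rm el},\tau}$ and a boundary term $\tfrac{\taur}{2}\int_\Omega \D_0\,\delta e_{{\rm el},\tau}^N:\delta e_{{\rm el},\tau}^N$, which yields the $W^{1,\infty}$-regularity of $e_{{\rm el},\tau}$.

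The key technical step is the plastic contribution $\tau\sum_k [{\rm dev}\,\sigma_\tau^k:\delta\varpi_\tau^k](\Omega\cup\Gamma_{\text{\sc d}})$. I split $\tau\,\delta\varpi_\tau^k = \tau\,\delta\pi_\tau^k + \taur\,(\delta\pi_\tau^k - \delta\pi_\tau^{k-1})$: Corollary \ref{cor:f-r} recasts the first piece as $\sum_k \mathcal{R}(\alpha_\tau^{k-1},\tau\,\delta\pi_\tau^k)$, recovering the dissipation bound already used in Proposition \ref{prop:en-ineq1}. For the second piece I apply Lemma \ref{prop:third-E-L}; since its test function $\pi$ must be admissible in $\pdeor(0)$ and $\delta\pi_\tau^k$ inherits a non-trivial trace from the Dirichlet condition, I first subtract a suitable boundary-datum lifting and absorb the resulting extra terms through the regularity $u_{\text{\sc d}}\in W^{2,\infty}(0,T;L^2(\Omega;\R^d))$ from \eqref{eq:hp-f}. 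Combined with the monotonicity of $\SYLD$ and $\delta\alpha_\tau^k\le 0$ in the spirit of \eqref{formal-calculus}, this produces a telescoping inequality contributing $\taur\,\mathcal{R}(\alpha_\tau^{N-1},\delta\pi_\tau^N)$ to the left-hand side and only $\taur\,\mathcal{R}(\alpha_0,\DT\pi_0)$ to the right, the latter being controlled by the compatibility assumption \eqref{eq:hp-in-data++} and by $\DT\pi_0\in L^2(\Omega;\Rdev)$ from \eqref{eq:hp-in-data}. This yields the desired $W^{1,\infty}$-bound for $\pi_\tau$.

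Assembling the three contributions, I arrive at a discrete inequality in which the target norms sit on the left-hand side, while the right-hand side collects initial data, the external force $\tau\sum_k\int_\Omega f_\tau^k\cdot(\delta w_\tau^k - \delta w_{{\text{\sc d}},\tau}^k)$, boundary contributions involving $\delta u_{{\text{\sc d}},\tau}$ and $\delta^2 u_{{\text{\sc d}},\tau}$ (controlled through \eqref{eq:hp-f}), and lower-order quantities already estimated in Proposition \ref{prop:unif-estimates}. A standard discrete Gr\"onwall argument, valid for $\tau$ sufficiently small, then closes the estimate. The main obstacle throughout is the plastic term: since Lemma \ref{prop:third-E-L} is only available for test functions with vanishing trace on $\Gamma_{\text{\sc d}}$, the boundary-correction mechanism has to be implemented carefully, and the ensuing error terms must be shown to be of strictly lower order than the coercive quantities produced on the left-hand side.
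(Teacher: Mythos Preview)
Your approach is essentially the paper's: introduce $w_\tau^k=u_\tau^k+\taur\,\delta u_\tau^k$, test the rewritten momentum balance against $\delta w_\tau^k-(\delta u_{{\text{\sc d}},\tau}^k+\taur\,\delta^2 u_{{\text{\sc d}},\tau}^k)$, and combine Corollary~\ref{cor:f-r} with Lemma~\ref{prop:third-E-L} to obtain a telescoping estimate for the plastic contribution, while L\"owner monotonicity of $\C$ and the $\D_0$-coercivity from \eqref{eq:def-D} deliver the elastic bounds.

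One point deserves correction. Your worry that the test measure $\delta\pi_\tau^{k-1}$ fails the admissibility hypothesis of Lemma~\ref{prop:third-E-L} is unfounded, and the boundary-lifting mechanism you propose is unnecessary. Indeed, since $(\delta u_\tau^{k-1},\delta e_{{\rm el},\tau}^{k-1},\delta\pi_\tau^{k-1})\in\pdeor(\delta u_{{\text{\sc d}},\tau}^{k-1})$, the shifted triple $\big(\delta u_\tau^{k-1}-\delta u_{{\text{\sc d}},\tau}^{k-1},\ \delta e_{{\rm el},\tau}^{k-1}-e(\delta u_{{\text{\sc d}},\tau}^{k-1}),\ \delta\pi_\tau^{k-1}\big)$ lies in $\pdeor(0)$ with the \emph{same} plastic component; this is exactly what Lemma~\ref{prop:third-E-L} requires. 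So the paper applies the lemma directly with $\pi$ proportional to $\delta\pi_\tau^{k-1}$, uses the discrete product rule $[{\rm dev}\,\sigma_\tau^k{:}\,\delta^2\pi_\tau^k]=\delta\{[{\rm dev}\,\sigma_\tau^k{:}\,\delta\pi_\tau^k]\}-[{\rm dev}\,\delta\sigma_\tau^k{:}\,\delta\pi_\tau^{k-1}]$, and then the monotonicity of $\SYLD$ and the triangle inequality reduce the non-telescoping remainder to a sign. No extra error terms from a boundary correction ever enter.

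Two further minor remarks. First, the paper keeps $\tfrac{1}{\taur}\tau\sum_k\int_\Omega\rho|\delta w_\tau^k|^2$ on the left and controls the cross term $\tfrac{1}{\taur}\tau\sum_k\int_\Omega\rho\,\delta u_\tau^k\cdot\delta w_\tau^k$ on the right via H\"older and Proposition~\ref{prop:unif-estimates}; the $H^2$ bound for $\widetilde u_\tau$ then follows from $\delta^2 u_\tau^k=(\delta w_\tau^k-\delta u_\tau^k)/\taur$. Your Abel-summation route extracting $\taur\,\tau\sum_k\int_\Omega\rho|\delta^2 u_\tau^k|^2$ directly is equally valid. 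Second, since all right-hand side terms involving $\DT w_\tau$ or $\DT e_{{\rm el},\tau}$ are linear in these quantities, Young's inequality and absorption suffice to close the estimate; a Gr\"onwall argument is not actually needed here.
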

\begin{proof}
Fix $k\in\{1,\dots,{T}/{\tau}\}$, and consider the map $w^k_{\tau}:=u^k_{\tau}+\taur \delta u^k_{\tau}$, where $\taur>0$ is the constant introduced in Subsection \ref{subs:el-visc}. Since $\delta^2 u^k_{\tau}=
%\frac{\delta w^k_{\tau}-\delta u^k_\tau}
(\delta w^k_{\tau}-\delta u^k_\tau)/{\taur}$, equation \eqref{eq:form-disc-equilibrium} rewrites as
\begin{equation}
\label{eq:E-L-w}
\rho\Big(\frac{\delta w^k_{\tau}}{\taur}\Big)-{\rm div}\,\sigma^k_{\tau}=f^k_{\tau}+\rho\Big(\frac{\delta u^k_{\tau}}{\taur}\Big).
\end{equation}
Now, testing \eqref{eq:E-L-w} against $\delta w^k_{\tau}-(\delta u_{\text{\sc d}, \tau}^k+\taur \delta^2 u_{\text{\sc d}, \tau}^k)$, by Lemma \ref{prop:integ-parts} we deduce the estimate
\begin{align}
&\nonumber\frac{1}{\taur}\int_{\Omega}\rho|\delta w^{\tau}_k|^2\,\dx+[{\rm dev}\,\sigma_{\tau}^k:(\delta \pi_{\tau}^k+\taur \delta^2 \pi_{\tau}^k)](\Omega\cup \Gamma_{\text{\sc d}})\\
&\nonumber\qquad +\int_{\Omega} \sigma_{\tau}^k:(\delta e_{{\rm el},\tau}^k+\taur \delta^2 e_{{\rm el},\tau}^k- e(\delta u_{\text{\sc d}, \tau}^k)-\taur  e( \delta^2 u_{\text{\sc d}, \tau}^k))\,\d x\\
&\nonumber \quad=\int_{\Omega} f_{\tau}^k\cdot (\delta w_{\tau}^k-\delta u_{\text{\sc d}, \tau}^k-\taur   \delta^2 u_{\text{\sc d}, \tau}^k)\,\dx+\frac{1}{\taur}\int_{\Omega}\rho\delta u_{\tau}^k\cdot (\delta w_{\tau}^k-\delta u_{\text{\sc d}, \tau}^k-\taur   \delta^2 u_{\text{\sc d}, \tau}^k)\,\dx\\
&\label{eq:2nd-a-pr-1}\qquad +\frac{1}{\taur}\int_{\Omega}\rho\delta w_{\tau}^k\cdot (\delta u_{\text{\sc d}, \tau}^k+\taur   \delta^2 u_{\text{\sc d}, \tau}^k)\,\dx.
\end{align}
In view of Lemma \ref{prop:integ-parts} we have
\begin{align*}
[{\rm dev}\,\sigma_{\tau}^k:(\delta \pi_{\tau}^k+\taur \delta^2 \pi_{\tau}^k)](\Omega\cup \Gamma_{\text{\sc d}})=[{\rm dev}\,\sigma_{\tau}^k:\delta \pi_{\tau}^k](\Omega\cup \Gamma_{\text{\sc d}})+\taur[{\rm dev}\,\sigma_{\tau}^k:\delta^2 \pi_{\tau}^k](\Omega\cup \Gamma_{\text{\sc d}}).
\end{align*}
Now, Corollary \ref{cor:f-r} yields
\begin{equation}
\label{eq:2nd-a-pr-2}[{\rm dev}\,\sigma_{\tau}^k:\delta \pi_{\tau}^k](\Omega\cup \Gamma_{\text{\sc d}})= \mathcal{R}({\alpha_\tau^{k-1}},\delta \pi_{\tau}^k),
\end{equation}
whereas Lemma \ref{prop:third-E-L} entails %\COMMENT{HERE $\alpha_{\tau}^k$ TO BE MODIFIED(?)}
\begin{align}
&\nonumber\taur[{\rm dev}\,\sigma_{\tau}^k: \delta^2 \pi_{\tau}^k)](\Omega\cup \Gamma_{\text{\sc d}})=\taur \delta \{[{\rm dev}\,\sigma_{\tau}^k:\delta \pi_{\tau}^k] (\Omega\cup \Gamma_{\text{\sc d}})\}-\taur [{\rm dev}\,\delta \sigma_{\tau}^k:\delta \pi_{\tau}^{k-1}](\Omega\cup \Gamma_{\text{\sc d}})\\
&\nonumber\quad\geq \taur \delta \{[{\rm dev}\,\sigma_{\tau}^k:\delta
  \pi_{\tau}^k] (\Omega\cup \Gamma_{\text{\sc
  d}})\}+\mathcal{R}({\alpha_\tau^{k-1}},\delta
  \pi_{\tau}^k)+\mathcal{R}(\alpha_{\tau}^{k-\UUU 2},\delta \pi_{\tau}^{k-1})-\mathcal{R}({\alpha_\tau^{k-1}},\delta \pi_{\tau}^k+\delta \pi_{\tau}^{k-1})\\
&\nonumber\quad \geq \taur \delta \{[{\rm dev}\,\sigma_{\tau}^k:\delta \pi_{\tau}^k] (\Omega\cup \Gamma_{\text{\sc d}})\}+\mathcal{R}({\alpha_\tau^{k-1}},\delta \pi_{\tau}^k)+\mathcal{R}({\alpha_\tau^{k-1}},\delta \pi_{\tau}^{k-1})-\mathcal{R}({\alpha_\tau^{k-1}},\delta \pi_{\tau}^k+\delta \pi_{\tau}^{k-1})\\
&\label{eq:2nd-a-pr-3}\quad\geq \taur \delta \{[{\rm dev}\,\sigma_{\tau}^k:\delta \pi_{\tau}^k] (\Omega\cup \Gamma_{\text{\sc d}})\},
\end{align}
where the second-to-last step follows by the fact that $\SYLD$ is nondecreasing (see 
Subsection \ref{sub:diss}), and the last step is a consequence of the
\UUU triangle \EEE inequality. 
By combining \eqref{eq:2nd-a-pr-1}, \eqref{eq:2nd-a-pr-2}, and \eqref{eq:2nd-a-pr-3}, we obtain
\begin{align}
&\nonumber\frac{1}{\taur}\int_{\Omega}\rho|\delta w_{\tau}^k|^2\,\dx
+\taur\delta\{[{\rm dev}\,\sigma_{\tau}^k:\delta\pi_{\tau}^k](\Omega\cup \Gamma_{\text{\sc d}})\}
+\mathcal{R}({\alpha_\tau^{k-1}},\delta \pi_{\tau}^k)
\\
&\nonumber\qquad+\int_{\Omega} \sigma_{\tau}^k:(\delta e_{{\rm el},\tau}^k
+\taur \delta^2 e_{{\rm el},\tau}^k- e(\delta u_{\text{\sc d}, \tau}^k)
-\taur e(\delta^2 u_{\text{\sc d},\tau}^k))\,\d x
\\
&\nonumber \quad\leq\int_{\Omega}f_{\tau}^k\cdot(\delta w_{\tau}^k-\delta u_{\text{\sc d}, \tau}^k
-\taur\delta^2 u_{\text{\sc d}, \tau}^k)\,\dx
+\frac{1}{\taur}\int_{\Omega}
\rho\delta u_{\tau}^k\cdot(\delta w_{\tau}^k-\delta u_{\text{\sc d}, \tau}^k
-\taur\delta^2u_{\text{\sc d}, \tau}^k)\,\dx
\\
&\label{eq:2nd-a-pr-4}\qquad 
+\frac{1}{\taur}\int_{\Omega}\rho\delta w_{\tau}^k\cdot(\delta u_{\text{\sc d}, \tau}^k
+\taur\delta^2u_{\text{\sc d}, \tau}^k)\,\dx.
\end{align}
Multiplying \eqref{eq:2nd-a-pr-4} by $\tau$, summing for $k=1,\dots,n$, with 
$n\in\{1,\dots,{T}/{\tau}\}$, and using again \eqref{eq:2nd-a-pr-2} with $k=n$, 
we infer that
\begin{align}
&\nonumber\frac{\tau}{\taur}\sum_{k=1}^n\int_{\Omega}\rho|\delta
  w_{\tau}^k|^2\,\dx+\taur\mathcal{R}(\alpha_{\tau}^{\UUU n-1},\delta
  \pi_{\tau}^n)\UUU -  \taur\mathcal{R}(\alpha_{\tau}^{-1},\delta
  \pi_{\tau}^0)\EEE\\
&\nonumber\quad+\tau\sum_{k=1}^n\mathcal{R}({\alpha_\tau^{k-1}},\delta \pi_{\tau}^k)+\tau\sum_{k=1}^n\int_{\Omega} \sigma_{\tau}^k:(\delta e_{{\rm el},\tau}^k+\taur \delta^2 e_{{\rm el},\tau}^k)\,\dx\\
&\nonumber \quad\leq \tau\sum_{k=1}^n\int_{\Omega} f_{\tau}^k\cdot (\delta w_{\tau}^k-\delta u_{\text{\sc d}, \tau}^k-\taur   \delta^2 u_{\text{\sc d}, \tau}^k)\,\dx+\frac{\tau}{\taur}\sum_{k=1}^n\int_{\Omega}\rho\delta u_{\tau}^k\cdot (\delta w_{\tau}^k-\delta u_{\text{\sc d}, \tau}^k-\taur   \delta^2 u_{\text{\sc d}, \tau}^k)\,\dx\\
&\label{eq:2nd-a-pr-5} \qquad+\frac{\tau}{\taur}\sum_{k=1}^n\int_{\Omega}\rho\delta w_{\tau}^k\cdot (\delta u_{\text{\sc d}, \tau}^k+\taur   \delta^2 u_{\text{\sc d}, \tau}^k)\,\dx+\tau\sum_{k=1}^n\int_{\Omega} \sigma_{\tau}^k:(e(\delta u_{\text{\sc d}, \tau}^k)+\taur  e( \delta^2 u_{\text{\sc d}, \tau}^k))\,\dx.
\end{align}
By \eqref{eq:def-D}, 
\begin{align*}
&\tau\sum_{k=1}^n\int_{\Omega}\sigma_{\tau}^k:(\delta e_{{\rm el},\tau}^k+\taur \delta^2 e_{{\rm el},\tau}^k)\,\dx=\tau \sum_{k=1}^n\int_{\Omega}\C(\alpha_{\tau}^{k-1})(e_{{\rm el},\tau}^k+\taur \delta e_{{\rm el},\tau}^k):(\delta e_{{\rm el},\tau}^k+\taur \delta^2 e_{{\rm el},\tau}^k)\,\dx\\
&\quad+\tau\sum_{k=1}^{n}\int_{\Omega}\D_0 \delta e_{{\rm el},\tau}^k:(\delta e_{{\rm el},\tau}^k+\taur \delta^2 e_{{\rm el},\tau}^k)\,\dx.
\end{align*}
Thus, arguing as in \eqref{eq:compute-el}, we have
\begin{align}
&\nonumber\tau \sum_{k=1}^n\int_{\Omega}\sigma_{\tau}^k:(\delta e_{{\rm el},\tau}^k+\taur \delta^2 e_{{\rm el},\tau}^k)\,\dx\geq \int_{\Omega}\frac12\C(\alpha_{\tau}^{n})(e_{{\rm el},\tau}^n+\taur \delta e_{{\rm el},\tau}^n):(e_{{\rm el},\tau}^n+\taur \delta e_{{\rm el},\tau}^n)\,\dx\\
&\nonumber\quad- \int_{\Omega}
\frac12\C(\alpha_0)(e(u_0)-\pi_0+\taur
   (e(v_0)\UUU - \DT\pi_0\EEE)):(e(u_0)-\pi_0+\taur (e(v_0)\UUU -
  \DT\pi_0\EEE)
\,\dx\\&\quad
+\int_{\Omega}\Big(\tau\sum_{k=1}^n 
%\int_{\Omega}
\D_0 \delta e_{{\rm el},\tau}^k:\delta e_{{\rm el},\tau}^k
%\,\d x\\&
\label{eq:add-d3}
%\quad
+\frac{\taur}{2}
%\int_{\Omega}
\D_0 \delta e_{{\rm el},\tau}^n:\delta e_{{\rm el},\tau}^n\,\d x
-\frac{\taur}{2}
%\int_{\Omega}
\D_0 e(v_0):e(v_0)\Big)\,\d x.
\end{align}
Eventually, by \eqref{eq:2nd-a-pr-5} and \eqref{eq:add-d3}, and by
recalling \eqref{eq:together3}, for every $t\UUU = k\tau\EEE,$
\begin{align*}
&\frac{1}{\taur}\int_0^t\!\!\int_{\Omega}\rho|\DT{w}_{\tau}|^2\,\d x\,\d s
+{\taur}\mathcal{R}(\bar{\alpha}_{\tau}(t),\DT{\pi}_{\tau}(t))
+D_{\mathcal{R}}(\alpha_{\tau};\pi_{\tau};0,t)+\frac{\taur}{2}\int_{\Omega}\D_0 \DT{e}_{{\rm el},\tau}(t):\DT{e}_{{\rm el},\tau}(t)\,\d x\\
&\qquad+\int_{\Omega}\C({\underline{\alpha}}(t))(\bar{e}_{{\rm el},\tau}(t)+\taur\DT{e}_{{\rm el},\tau}(t)):(\bar{e}_{{\rm el},\tau}(t)+\taur\DT{e}_{{\rm el},\tau}(t))\,\d x+\int_0^t\!\!\int_{\Omega}\!\!\D_0 \DT{e}_{{\rm el},\tau}:\DT{e}_{{\rm el},\tau}\,\d x\,\d s\\
&\quad\leq \int_{\Omega}\Big(\frac12 \C(\alpha_0)(e(u_0){-}\pi_0{+}
\taur e(v_0){-}\taur\DT\pi_0)
{:}(e(u_0){-}\pi_0{+}\taur e(v_0){-}\taur\DT\pi_0)
%\,\dx
+\frac{\taur}{2}
%\int_{\Omega}
\D_0 e(v_0){:}e(v_0)\Big)\d x\\
&\qquad +\int_0^t\!\!\int_{\Omega}\bar{f}_{\tau}\cdot (\DT{w}_{\tau}-\DT{u}_{{\text{\sc d}},\tau}-\taur \DDT{\widetilde{u}}_{{\text{\sc d}},\tau})\,\d x\,\d s+\frac{1}{\taur}\int_0^t\!\!\int_{\Omega} \DT{u}_{\tau}\cdot (\DT{w}_{\tau}-\DT{u}_{{\text{\sc d}},\tau}-\taur \DDT{\widetilde{u}}_{{\text{\sc d}},\tau})\,\d x\,\d s\\
&\qquad+\frac{1}{\taur}\int_0^t\!\!\int_{\Omega} \DT{w}_{\tau}\cdot
  (\DT{u}_{{\text{\sc d}},\tau}+\taur \DDT{\widetilde{u}}_{{\text{\sc
  d}},\tau})\,\d x\,\d s+\UUU \taur\mathcal R(\alpha_{0},\DT\pi_0)\EEE\\
&\qquad+\int_0^t\!\!\int_{\Omega}\big(\C(\underline{\alpha}_{\tau})\bar{e}_{{\rm el},\tau}+\D(\underline{\alpha}_{\tau})\DT{e}_{{\rm el},\tau}\big):(e(\DT{u}_{{\text{\sc d}},\tau})+\taur e(\DDT{\widetilde{u}}_{{\text{\sc d}},\tau})))\,\d x\,\d s.
\end{align*}
The assert follows by H\"older's inequality, Proposition \ref{prop:unif-estimates}, and the assumptions on $\SYLD$ (see Subsection \ref{sub:diss}).
\end{proof}

\section{Convergence and proof of Theorem \ref{thm:main}}\label{sec:proof-main}
%        ~~~~~~~~~~~~~~~~~~~~~~~~~~~~~~~~~~~~~~~~~~~~~~~
\begin{proposition}[Compactness]
\label{prop:compactness}
Under the assumptions of Theorem \ref{thm:main}, there exist 
\UUU $\alpha$, $e_{\rm el}$, $\pi$, and $u$ \EEE
% $\alpha\in H^1(0,T;L^2(\Omega))\cap L^{\infty}(0,T;W^{1,p}(\Omega))$, 
% $e_{\rm el}\in W^{1,\infty}(0,T;L^2(\Omega;\mthree_{\rm sym}))$, 
% $\pi\in W^{1,\infty}(0,T;\mb)$, and $u\in W^{1,\infty}(0,T;BD(\Omega;\R^d))\cap H^2(0,T;L^2(\Omega;\R^d))$
such that $(u(t),e_{\rm el}(t),\pi(t))\in \mathscr{A}(u_{\text{\sc d}}(t))$ for every $t\in [0,T]$ (see \eqref{eq:def-A-w}), the initial conditions \eqref{initial conditions} are satisfied, and up to the extraction of a (non-relabeled) subsequence, there holds
\begin{subequations}\label{eq:comp}\begin{align}
&\label{eq:comp1}\alpha_{\tau}\wk\alpha\quad\text{weakly* in }\,H^1(0,T;L^2(\Omega))\cap L^{\infty}(0,T;W^{1,p}(\Omega)),\\
&\label{eq:comp3}e_{{\rm el},\tau}\wks e_{\rm el}\quad\text{weakly* in }\,W^{1,\infty}(0,T;L^2(\Omega;\mthree_{\rm sym})),\\
&\label{eq:comp4}\pi_{\tau}\wks \pi\quad\text{weakly* in }\,W^{1,\infty}(0,T;\mb),\\
&\label{eq:comp5}u_{\tau}\wks u\quad\text{weakly* in
  }\,W^{1,\infty}(0,T;BD(\Omega;\R^d))\cap \GGG H^1\EEE(0,T;L^2(\Omega;\R^d)),\\
&\label{eq:comp2}\underline{\alpha}_{\tau}\wks\alpha
\ \text{ and }\ \bar{\alpha}_{\tau}\wks\alpha
\quad\text{weakly* in }\,L^{\infty}((0,T)\times\Omega),
%\\
%&\label{eq:comp2bis}\bar{\alpha}_{\tau}\wks\alpha\quad\text{weakly* in }\,L^{\infty}((0,T)\times\Omega),
\\
&\label{eq:comp3bis}\bar{e}_{{\rm el},\tau}\wks e_{\rm el}\quad\text{weakly* in }\,L^{\infty}(0,T;L^2(\Omega;\mthree_{\rm sym})),\\
&\label{eq:comp6}\widetilde{u}_{\tau}\wk u\quad\text{weakly in }\,H^2(0,T;L^2(\Omega;\R^d)).
\end{align}\end{subequations}
\end{proposition}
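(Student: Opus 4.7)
The plan is to obtain the convergences in \eqref{eq:comp} essentially by an application of the Banach--Alaoglu theorem to the uniform bounds collected in Propositions~\ref{prop:unif-estimates} and \ref{prop:higher-order}, and then to identify the various limits and check the membership in $\mathscr{A}(u_{\text{\sc d}}(t))$ and the initial data.

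First, from the uniform bound on $\|\alpha_\tau\|_{H^1(0,T;L^2(\Omega))\cap L^\infty(0,T;W^{1,p}(\Omega))}$ I would extract, by Banach--Alaoglu and the reflexivity of $H^1(0,T;L^2(\Omega))$ together with the fact that $L^\infty(0,T;W^{1,p}(\Omega))$ is the dual of a separable Banach space (as $W^{1,p}(\Omega)$ is reflexive for $1<p<\infty$), a subsequence satisfying \eqref{eq:comp1}. The same abstract scheme yields the weak$*$ convergences \eqref{eq:comp3}, \eqref{eq:comp4}, \eqref{eq:comp5}, \eqref{eq:comp3bis}, \eqref{eq:comp6}: the bounds in Proposition~\ref{prop:higher-order} control $\widetilde u_\tau$ in $H^2(0,T;L^2)$, while $u_\tau$, $e_{{\rm el},\tau}$, and $\pi_\tau$ are bounded in $W^{1,\infty}$ of the respective spaces, all of which are duals of separable Banach spaces (using the identification $\mathcal{M}_b=C_0^*$ and $BD=\ldots$ via its natural predual). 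The limits $u$, $e_{\rm el}$, $\pi$, $\alpha$ are thereby defined.

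Second, I would identify the interpolants. For \eqref{eq:comp2}, since  $\underline\alpha_\tau-\alpha_\tau$ and $\bar\alpha_\tau-\alpha_\tau$ are bounded by $\tau\|\dot\alpha_\tau\|_{L^2(0,T;L^2(\Omega))}$ (in $L^2((0,T)\times\Omega)$) and $\|\alpha_\tau\|_{L^\infty((0,T)\times\Omega)}\le 1$, both $\underline\alpha_\tau$ and $\bar\alpha_\tau$ converge to the same limit $\alpha$ weakly$*$ in $L^\infty((0,T)\times\Omega)$. Analogously, $\bar e_{{\rm el},\tau}-e_{{\rm el},\tau}\to 0$ in $L^\infty(0,T;L^2)$ by the $W^{1,\infty}$-bound in time, giving \eqref{eq:comp3bis}. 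For \eqref{eq:comp6}, note that $\widetilde u_\tau$ and $u_\tau$ agree at the nodes $k\tau$ and have the same time-derivative structure up to $\tau$-order terms, and the $H^2$-bound ensures weak convergence in the appropriate Bochner space.

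Third, I would pass to the limit in the constraint $(u_\tau(t),e_{{\rm el},\tau}(t),\pi_\tau(t))\in\mathscr{A}(u_{\text{\sc d},\tau}(t))$. The kinematic compatibility $e(u_\tau)=e_{{\rm el},\tau}+\pi_\tau$ survives the limit by linearity and the weak$*$-continuity of the distributional symmetric gradient. The relaxed boundary condition $\pi=(u_{\text{\sc d}}-u)\odot\nu_\Gamma\mathcal{H}^{d-1}$ on $\Gamma_{\text{\sc d}}$ is likewise preserved by passing to the limit weakly in the $BD$-trace, using that $u_\tau\to u$ weakly in $L^1(\Gamma_{\text{\sc d}};\R^d)$ thanks to the continuity of the trace operator on $BD(\Omega;\R^d)$. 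The fact that this holds for every $t\in[0,T]$, not merely almost every $t$, follows from the time-regularity $W^{1,\infty}$ (resp.\ $W^{1,\infty}$ for $\pi$ and for $e_{\rm el}$), which provides (after modification on a null set) $C^0$-representatives whose pointwise values may be obtained via weak$*$-limits in finer topologies.

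Finally, the initial conditions \eqref{initial conditions} are recovered from the $W^{1,\infty}$ (hence continuous) time-regularity of the limit quadruple, combined with $u_\tau(0)=u_0$, $\pi_\tau(0)=\pi_0$, $\alpha_\tau(0)=\alpha_0$, and $\dot u_\tau(0^+)=v_0$ from the construction at the $k=0$ level; indeed $\dot{\widetilde u}_\tau(0)=v_0$ is preserved in the weak limit thanks to the $H^2$-bound, which ensures that evaluation at $t=0$ is a continuous functional. The main technical obstacle I anticipate is the identification of pointwise-in-time values for the $BD$- and measure-valued components, as weak$*$ convergence in $W^{1,\infty}$ with values in a non-reflexive space requires care; I would handle it by working with the natural weak$*$-continuous representatives, exploiting that $\alpha\in C_w(0,T;W^{1,p}(\Omega))$ and that $u\in C_w(0,T;BD(\Omega;\R^d))$ after passing to a good representative.
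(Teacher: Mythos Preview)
Your approach is essentially the same as the paper's: extract weak/weak$*$ limits from the a-priori bounds of Propositions~\ref{prop:unif-estimates} and \ref{prop:higher-order} via Banach--Alaoglu, then identify the various interpolants and check closedness of the constraint set $\mathscr{A}(u_{\text{\sc d}}(t))$. The paper is slightly more explicit in one place: to show that the weak $H^2$-limit of $\widetilde u_\tau$ coincides with the limit of $u_\tau$, it computes directly that $\int_0^T\|\dot{\widetilde u}_\tau-\dot u_\tau\|^2_{L^2(\Omega;\R^d)}\,\d t\le C\tau^2$ from the bound on $\delta^2 u^k_\tau$, combined with pointwise-in-time weak convergence in $L^2$ coming from the embedding into $C_{\rm w}(0,T;L^2(\Omega;\R^d))$; your phrase ``same time-derivative structure up to $\tau$-order terms'' is gesturing at exactly this computation.

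There is, however, one genuine slip in your argument for the admissibility condition. You claim that $u_\tau\to u$ weakly in $L^1(\Gamma_{\text{\sc d}};\R^d)$ ``thanks to the continuity of the trace operator on $BD(\Omega;\R^d)$'', but the $BD$-trace is \emph{not} sequentially weak$*$-continuous (it is continuous only for the strong or intermediate/strict topology). So you cannot pass to the limit in the boundary constraint by looking at traces separately. The correct statement---which the paper obtains by citing \cite[Lemma~2.1]{dalmaso.desimone.mora06}---is that the whole set $\mathscr{A}(w)$ is closed under the product of the weak$*$-$BD$, weak-$L^2$, and weak$*$-$\mathcal{M}_b$ topologies: any defect in the convergence of the traces is absorbed into the boundary part of the limiting plastic-strain measure, so that the \emph{combined} constraint $e(u)=e_{\rm el}+\pi$ in $\Omega$ together with $\pi=(w-u)\odot\nu_\Gamma^{}\,\mathcal{H}^{d-1}$ on $\Gamma_{\text{\sc d}}$ passes to the limit even though the individual traces need not converge.
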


\begin{proof}
Properties \eqref{eq:comp1}--\eqref{eq:comp5} are a consequence of 
Propositions~\ref{prop:unif-estimates} and \ref{prop:higher-order}. 
The admissibility condition (C1) (see Definition \ref{def:weak}) follows by the 
same argument as in \cite[Lemma 2.1]{dalmaso.desimone.mora06}. Additionally, by 
Proposition \ref{prop:unif-estimates} there holds
\begin{equation}
\label{eq:aux1}
u_{\tau}\wks u\quad\text{weakly* in }\,W^{1,\infty}(0,T;L^2(\Omega;\R^d)),
\end{equation}
and there exist $\check{\alpha},\hat{\alpha}\in L^{\infty}((0,T)\times\Omega)$, and 
$\hat{e}\in L^{\infty}(0,T;L^2(\Omega;\mthree_{\rm sym}))$ such that
%$$\underline{\alpha}_{\tau}\wks\check{\alpha}\quad\text{weakly* in }\,L^{\infty}((0,T)\times\Omega)$$
%\COMMENT{WHY ONLY weakly in $L^2$ DOES NOT SUFFICE?}
\begin{equation*}
%\label{eq:aux5}
\bar{\alpha}_{\tau}\wks\check{\alpha}\ \text{ and }\ 
\underline{\alpha}_{\tau}\wks\hat{\alpha}
%\bar{e}_{{\rm el},\tau}\wks \hat{e}
\quad\text{weakly* in }\,L^{\infty}((0,T)\times\Omega)
\end{equation*}
and \begin{equation*}
%\label{eq:aux6}
\bar{e}_{{\rm el},\tau}\wks \hat{e}\quad\text{weakly* in }\,L^{\infty}(0,T;L^2(\Omega;\mthree_{\rm sym})).
\end{equation*}
Additionally by Proposition \ref{prop:higher-order} there exists a map 
$\hat{u}\in H^2(0,T;L^2(\Omega;\R^d))$ such that, up to the extraction of 
a (non-relabeled) subsequence,
\begin{equation}
\label{eq:aux2}
\widetilde{u}_{\tau}\wk \hat{u}\quad\text{weakly in }\,H^2(0,T;L^2(\Omega;\R^d)).
\end{equation}
%In particular, 
By the compact embeddings of $W^{1,\infty}(0,T;L^2(\Omega;\R^d))$ and 
$H^2(0,T;L^2(\Omega;\R^d))$ into $C_{\rm w}(0,T;L^2(\Omega;\R^d))$, 
we deduce
\begin{equation}
\label{eq:aux3}
u_{\tau}(t)\wk u(t)\quad\text{weakly in }\,L^2(\Omega;\R^d),
\end{equation} 
and 
\begin{equation}
\label{eq:aux4}
\widetilde{u}_{\tau}(t)\wk \hat{u}(t)\quad\text{weakly in }\,L^2(\Omega;\R^d),
\end{equation} 
for every $t\in [0,T]$.
To complete the proof of \GGG\eqref{eq:comp} \EEE, it remains 
to show that $\check{\alpha}=\hat{\alpha}=\alpha$, $\hat{e}=e_{\rm el}$, and $\hat{u}=u$. 

We proceed by showing this last equality; the proof of the other two identities is analogous. 
Fix $k\in \{1,\dots,{T}/{\tau}\}$, and $t\in ((k{-}1)\tau,k\tau]$. Then, using the fact that
$$\DT{\widetilde{u}}_{\tau}(t)=\frac{(t-(k{-}1)\tau)}{\tau}\delta u_{\tau}^k+\Big(1-\frac{(t-(k{-}1)\tau)}{\tau}\Big)\delta u_{\tau}^{k-1}$$
for every $t\in ((k{-}1)\tau,k\tau]$, we have
\begin{align}
&\UUU
  \int_0^T\|\DT {\widetilde{u}}_{\tau}(t)-\DT
  u_{\tau}(t)\|^2_{L^2(\Omega;\R^d)}\dt=
  \sum_{k=1}^N
  \int_{(k-1)\tau}^{k\tau}\|\DT{\widetilde{u}}_{\tau}(t)-\DT
  u_{\tau}(t)\|^2_{L^2(\Omega;\R^d)}\dt \nonumber\\
&\quad\UUU = \tau^2 \sum_{k=1}^N
  \int_{(k-1)\tau}^{k\tau} (1-\overline \alpha_\tau(t))^2 \, \dt
  \left\| \frac{\DT u_\tau - \DT u_\tau(\cdot - \tau)}{\tau}\right\|^2
  = \frac{\tau^2}{3}\sum_{k=1}^N\tau \| \delta^2
  u_k\|^2_{L^2(\Omega;\R^d)}\leq C\tau^2, \label{eq:aux5}
% \|\widetilde{u}_{\tau}(t)-u_{\tau}(t)\|^2_{L^2(\Omega;\R^d)}\leq \tau^2\int_{(k{-}1)\tau}^{t}\int_{\Omega}\int_{(k{-}1)\tau}^s |\DDT{\widetilde{u}}_{\tau}(r)|^2\,\d r\,\d x\,\d s\leq \tau^{3}\|\DDT{\widetilde{u}}\|_{L^2(0,T;L^2(\Omega;\R^d))}^2\leq C\tau^3,
\end{align}
where the last inequality follows by Proposition \ref{prop:higher-order}. The assert follows 
then by combining \eqref{eq:aux3}, \eqref{eq:aux4}, and \eqref{eq:aux5}.
\end{proof}

\begin{proposition}[Strong convergence of the elastic strains]
\label{prop:strong-convergence}
Let $e_{\rm el}$ be the map identified in Proposition \ref{prop:compactness}. Under the assumptions 
of Theorem \ref{thm:main}, there holds
\begin{equation}
\label{eq:strong-el}
e_{{\rm el},\tau}\to e_{\rm el}\quad\text{strongly in }H^1(0,T;L^2(\Omega;\mthree_{\rm sym})),
\end{equation}
and
\begin{equation}
\label{eq:strong-el2}
\bar{e}_{{\rm el},\tau}(t)\to e_{\rm el}(t)\quad\text{strongly in }L^2(\Omega;\mthree_{\rm sym}))\quad\text{for a.e.}\,t\in [0,T].
\end{equation}
\end{proposition}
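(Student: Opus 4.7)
The plan is to implement rigorously at the discrete level the limsup-type estimate sketched in Subsection~\ref{subs:conv}. First, I would combine the monotonicity of $\bbC(\cdot)$ in the L\"owner ordering with the unidirectionality $\DT\alpha\le0$ (captured discretely by $\alpha_\tau^k\le\alpha_\tau^{k-1}$) to derive, via the pointwise inequality
\begin{align*}
\frac12\frac{\pl}{\pl t}\bigl[\bbC(\underline\alpha_\tau)(\bar e_{{\rm el},\tau}{-}e_{\rm el}){:}(\bar e_{{\rm el},\tau}{-}e_{\rm el})\bigr]
\le \bbC(\underline\alpha_\tau)(\bar e_{{\rm el},\tau}{-}e_{\rm el}){:}(\DT e_{{\rm el},\tau}{-}\DT e_{\rm el}),
\end{align*}
the coercive lower bound
\begin{align*}
\int_0^t\!\!\int_\Omega\bbD(\underline\alpha_\tau)(\DT e_{{\rm el},\tau}{-}\DT e_{\rm el}){:}(\DT e_{{\rm el},\tau}{-}\DT e_{\rm el})\,\d x\,\d s
+ \tfrac12\!\!\int_\Omega\!\bbC(\bar\alpha_\tau(t))(\bar e_{{\rm el},\tau}(t){-}e_{\rm el}(t))^{\otimes 2}\,\d x.
\end{align*}
This left-hand side is then bounded from above by cross terms plus the single ``active'' term $\int_Q\sigma_\tau{:}(\DT e_{{\rm el},\tau}-\DT e_{\rm el})\,\d x\d s$.

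Second, I would process the active term by the integration-by-parts Lemma~\ref{prop:integ-parts} applied to both $(\DT u_\tau, \DT e_{{\rm el},\tau}, \DT\pi_\tau)\in\mathscr{A}(\DT u_{\text{\sc d},\tau})$ and (at the limit) $(\DT u,\DT e_{\rm el},\DT\pi)\in\mathscr{A}(\DT u_{\text{\sc d}})$. Subtracting the two resulting identities and using the discrete equilibrium $-{\rm div}\,\sigma_\tau=\bar f_\tau-\rho\DDT{\widetilde u}_\tau$ yields
\begin{align*}
\int_\Omega\sigma_\tau{:}(\DT e_{{\rm el},\tau}{-}\DT e_{\rm el})\,\d x
=-[{\rm dev}\,\sigma_\tau:(\DT\pi_\tau{-}\DT\pi)](\Omega\cup\Gamma_{\text{\sc d}})+(\text{vanishing boundary terms})+\int_\Omega(\bar f_\tau{-}\rho\DDT{\widetilde u}_\tau){\cdot}(\DT u_\tau{-}\DT u).
\end{align*}
The plastic term is controlled by Corollary~\ref{cor:f-r} combined with the one-homogeneous duality inequality $[{\rm dev}\,\sigma_\tau{:}\DT\pi](\Omega\cup\Gamma_{\text{\sc d}})\le \mathcal{R}(\underline\alpha_\tau,\DT\pi)$ (obtained by taking $\lambda\pi$ with $\lambda\to0^+$ in Lemma~\ref{prop:second-E-L}), giving $-[{\rm dev}\,\sigma_\tau{:}(\DT\pi_\tau{-}\DT\pi)]\le \int\SYLD(\underline\alpha_\tau)(|\DT\pi|-|\DT\pi_\tau|)$.

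Third, I would pass to the limsup in each piece. The cross terms involving $\bbD(\underline\alpha_\tau)\DT e_{\rm el}+\bbC(\underline\alpha_\tau)e_{\rm el}$ tested against $\DT e_{{\rm el},\tau}-\DT e_{\rm el}\rightharpoonup 0$ in $L^2(Q)$ vanish, using that $\underline\alpha_\tau\to\alpha$ uniformly on $[0,T]\times\bar\Omega$ (this is guaranteed by $W^{1,p}$ with $p>d$ combined with $H^1(0,T;L^2)$ and Aubin-Lions). The plastic integral has $\limsup\le 0$ thanks to the Reshetnyak-type lower semicontinuity $\liminf\int_0^T\!\int\SYLD(\underline\alpha_\tau)|\DT\pi_\tau|\ge\int_0^T\!\int\SYLD(\alpha)|\DT\pi|$, combined with the uniform convergence of $\SYLD(\underline\alpha_\tau)$ to $\SYLD(\alpha)$.

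The main obstacle will be the inertial term $\int_Q\rho\DDT{\widetilde u}_\tau\cdot(\DT u_\tau-\DT u)$, since $\DT u_\tau-\DT u\rightharpoonup 0$ is only weak in $L^2(Q)$ and $\DDT{\widetilde u}_\tau\rightharpoonup\DDT u$ is also only weak. My strategy is to replace $\DT u_\tau$ by $\DT{\widetilde u}_\tau$ at cost $O(\tau)$ via \eqref{eq:aux5}, then integrate by parts in time to rewrite
\begin{align*}
\int_0^T\!\!\int_\Omega\rho\DDT{\widetilde u}_\tau{\cdot}(\DT{\widetilde u}_\tau{-}\DT u)\,\d x\d t
=\tfrac12\|\sqrt\rho\DT{\widetilde u}_\tau(T)\|_{L^2}^2-\tfrac12\|\sqrt\rho v_0\|_{L^2}^2-\bigl[\!\int\!\rho\DT{\widetilde u}_\tau{\cdot}\DT u\bigr]_0^T+\!\int_0^T\!\!\!\int\!\rho\DT{\widetilde u}_\tau{\cdot}\DDT u,
\end{align*}
and use $\DT{\widetilde u}_\tau(T)\rightharpoonup\DT u(T)$ in $L^2$ together with the weak lower-semicontinuity $\liminf\|\sqrt\rho\DT{\widetilde u}_\tau(T)\|^2\ge\|\sqrt\rho\DT u(T)\|^2$ to obtain $\limsup(\cdots)\le 0$. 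Combining the three contributions gives $\limsup$ of the coercive left-hand side $\le 0$, hence \eqref{eq:strong-el} follows from the positive definiteness of $\bbD_0$. Finally, \eqref{eq:strong-el2} follows by extracting a pointwise-in-$t$ subsequence from \eqref{eq:strong-el} and observing $\|\bar e_{{\rm el},\tau}(t)-e_{{\rm el},\tau}(t)\|_{L^2}\le\tau\|\DT e_{{\rm el},\tau}\|_{L^\infty(0,T;L^2)}\to 0$ by Proposition~\ref{prop:higher-order}.
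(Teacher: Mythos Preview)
Your overall strategy matches the paper's: both implement the limsup estimate from Subsection~\ref{subs:conv} by combining the L\"owner-monotonicity telescoping, the stress constraint for the plastic term, and a liminf/LSC for the dissipation. There is, however, one genuine technical point on which the two diverge.

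\textbf{The telescoping step.} Your displayed ``pointwise inequality''
\[
\tfrac12\tfrac{\partial}{\partial t}\bigl[\bbC(\underline\alpha_\tau)(\bar e_{{\rm el},\tau}{-}e_{\rm el}){:}(\bar e_{{\rm el},\tau}{-}e_{\rm el})\bigr]
\le \bbC(\underline\alpha_\tau)(\bar e_{{\rm el},\tau}{-}e_{\rm el}){:}(\DT e_{{\rm el},\tau}{-}\DT e_{\rm el})
\]
is not correct as written: $\bar e_{{\rm el},\tau}$ and $\underline\alpha_\tau$ are \emph{piecewise constant}, so the left-hand side carries Dirac masses at the nodes which are not present on the right (where $\DT e_{{\rm el},\tau}$ is the derivative of the \emph{affine} interpolant). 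The paper is aware of this and explicitly warns that one ``cannot directly use the values at time $t$''; instead it tests against the \emph{discrete samples} $\delta u(k\tau)$, $\delta e_{\rm el}(k\tau)$, $\delta\pi(k\tau)$ of the limit, for which the discrete chain rule \eqref{eq:starstarstar-strong} telescopes cleanly. Your version can be salvaged by inserting these samples and estimating the remainders (of size $\tau\|\DT e_{\rm el}\|_{L^\infty(L^2)}$, etc.), but as stated there is a gap.

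\textbf{The inertial term.} Here your route is actually cleaner than the paper's. The paper, after reaching the estimate preceding \eqref{eq:a-f-limsup}, simply cites the list of convergences \eqref{eq:num1-strong}--\eqref{eq:num8-strong}; but none of these yields strong $L^2$-convergence of $\DT u_\tau$, so the product $\rho\DDT{\widetilde u}_\tau\cdot(\DT u_\tau-\DT u^\tau)$ is a priori a weak$\times$weak pairing. Your integration by parts in time,
\[
\int_0^T\!\!\int_\Omega\rho\DDT{\widetilde u}_\tau\cdot(\DT{\widetilde u}_\tau-\DT u)
=\tfrac12\|\sqrt\rho\,\DT{\widetilde u}_\tau(T)\|_{L^2}^2-\tfrac12\|\sqrt\rho\,v_0\|_{L^2}^2
-\Bigl[\!\int_\Omega\rho\DT{\widetilde u}_\tau\cdot\DT u\Bigr]_0^T
+\int_0^T\!\!\int_\Omega\rho\DT{\widetilde u}_\tau\cdot\DDT u,
\]
together with $\DT{\widetilde u}_\tau(T)\rightharpoonup\DT u(T)$ in $L^2(\Omega)$ and weak lower semicontinuity, gives exactly $\liminf\ge0$ for this contribution and closes the argument. (At the discrete level this is the Abel summation analogue of \eqref{eq:add1-d}--\eqref{eq:add2-d}.) So on this point your proposal supplies what the paper leaves implicit.

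In summary: fix the telescoping by working with discrete samples of the limit as in \eqref{eq:star-strong}--\eqref{eq:starstarstar-strong}; keep your treatment of the inertial term.
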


\begin{proof}
For $k\in\{1,\dots,{T}/{\tau}\}$, denote by $\delta e_{\rm el}(k\tau)$ the quantity
$$\delta e_{\rm el}(k\tau):=\frac{e_{\rm el}(k\tau)-e_{\rm el}((k{-}1)\tau)}{\tau},$$
and by $\bar{e}_{\rm el}^{\tau},\,{e}_{\rm el}^{\tau}$ the forward-piecewise constant and the affine interpolants between the values $\{e(k\tau)\}_{k=1,\dots,T/\tau}$ (see \eqref{eq:b-int} and \eqref{eq:aff-int}). Let $\delta u(k\tau)$, $\delta \pi(k\tau)$, $\bar{u}^{\tau},\,u^{\tau},\,\bar{\pi}^{\tau}$, and $\pi^{\tau}$ be defined 
%accordingly. 
analogously. \UUU Note that here we cannot directly use the values at
time $t$, for this would prevent relation \eqref{eq:starstarstar-strong} to
hold. Here, the pointwise value of $\pi$ is simply that of its
right-continuous representative.\EEE %\COMMENT{TO SAY WHY WE NEED IT! AND IS IT WELL DEFINED? ($\pi$ DOES NOT NEED TO BE CONTINUOUS IN TIME.)}

Fix $k\in\{1,\dots,{T}/{\tau}\}$. We proceed by testing the time-discrete equilibrium equation \eqref{eq:form-disc-equilibrium} by $\delta u_{\tau}^k-\delta u(k\tau)$. On the one hand, by Lemma \ref{prop:integ-parts}, we have
\begin{align}
&\nonumber\int_{\Omega}\rho \delta^2 u^k_{\tau}\cdot (\delta u^k_{\tau}-\delta u(k\tau))\,\dx+[{\rm dev}\,\sigma_{\tau}^k:(\delta \pi_{\tau}^k-\delta \pi(k\tau))](\Omega\cup \Gamma_{\text{\sc{D}}})\\
&\quad+\int_{\Omega}\sigma_{\tau}^k:(\delta e_{{\rm el},\tau}^k-\delta e_{\rm el}(k\tau))\,\d x-\int_{\Omega}f_{\tau}^k\cdot (\delta u_{\tau}^k-\delta u(k\tau))\,\d x=0.
\label{eq:1-s-c}
\end{align}
On the other hand, Lemma \ref{prop:second-E-L} yields
\begin{equation}
\label{eq:2-s-c}
[{\rm dev}\,\sigma_{\tau}^k:(\delta \pi_{\tau}^k-\delta \pi(k\tau))](\Omega\cup \Gamma_{\text{\sc{D}}})\geq \mathcal{R}({\alpha_\tau^{k-1}},\delta \pi_{\tau}^k)-\mathcal{R}({\alpha_\tau^{k-1}},\delta \pi(k\tau)).
\end{equation}
By combining \eqref{eq:1-s-c} and \eqref{eq:2-s-c}, we obtain
\begin{align}%\nonumber
\int_{\Omega} \sigma_{\tau}^k:(\delta e_{{\rm el},\tau}^k-\delta e(k\tau))\,\d x
&\le\mathcal{R}({\alpha_\tau^{k-1}},\delta \pi(k\tau))
%\\[-.4em]&
-\mathcal{R}({\alpha_\tau^{k-1}},\delta \pi_{\tau}^k)+\int_{\Omega}(f_{\tau}^k-\rho \delta^2 u_{\tau}^k)\cdot (\delta u_{\tau}^k-\delta u(k\tau))\,\d x.
\label{eq:star-strong}\end{align}

In view of \UUU the definition of $\sigma_k$ \EEE there holds
\begin{align}
&\nonumber\int_{\Omega} \sigma_{\tau}^k:(\delta e_{{\rm el},\tau}^k-\delta e_{\rm el}(k\tau))\,\d x=\int_{\Omega}\C(\alpha_{\tau}^{k-1})(e_{{\rm el},\tau}^k-{e}_{\rm el}(k\tau)):(\delta e_{{\rm el},\tau}^k-\delta e_{\rm el}(k\tau))\,\d x\\
&\nonumber\quad
+\int_{\Omega}\!\D(\alpha_{\tau}^{\UUU k-1})(\delta e_{{\rm el},\tau}^k\!-\delta e_{\rm el}(k\tau))
{:}(\delta e_{{\rm el},\tau}^k\!{-}\delta e_{\rm el}(k\tau))
%\,\dx
+
%\int_{\Omega}
\D(\alpha_{\tau}^{\UUU k-1})\delta e_{\rm el}(k\tau){:}(\delta e_{{\rm el},\tau}^k\!{-}\delta e_{\rm el}(k\tau))\,\d x\\
&\nonumber\quad
+\int_{\Omega}\C(\alpha(k\tau))e_{\rm el}(k\tau):(\delta e_{{\rm el},\tau}^k-\delta e_{\rm el}(k\tau))\,\dx\\
&\quad\label{eq:starstar-strong}-\int_{\Omega}(\C(\alpha(k\tau))-\C(\alpha_{\tau}^{k-1}))e_{\rm el}(k\tau):(\delta e_{{\rm el},\tau}^k-\delta e_{\rm el}(k\tau))\,\d x.
\end{align}
Let now $n\in \{1,\dots,{T}/{\tau}\}$. By the monotonicity of $\C$ in the 
L\"owner order, arguing as in the proof of \eqref{eq:compute-el}, we deduce
\begin{align}
&\nonumber
\tau\sum_{k=1}^n\int_{\Omega}\C(\alpha_{\tau}^{k-1})(e^k_{{\rm el},\tau}-e_{\rm el}(k\tau)):(\delta e^k_{{\rm el},\tau}-\delta e_{\rm el}(k\tau))\,\d x
\\[-.3em]&\qquad\qquad
\label{eq:starstarstar-strong}\geq  \int_{\Omega} 
%\int_{\Omega}
\frac12\C(\alpha_{\tau}^{n})(e^n_{{\rm el},\tau}-e_{\rm el}(n\tau)):(e^n_{{\rm el},\tau}-\delta e_{\rm el}(n\tau))\,\d x.
\end{align}
Multiplying \eqref{eq:star-strong} by $\tau$, and summing for $k=1,\dots,\GGG T/\tau\EEE$, in view of \eqref{eq:starstar-strong} and \eqref{eq:starstarstar-strong}, we obtain the estimate
\begin{align*}
& \int_{\Omega}\frac12\C({\underline{\alpha}}_{\tau}(\GGG T \EEE))(\bar{e}_{{\rm el},\tau}(\GGG T \EEE)-\bar{e}_{\rm el}^{\tau}(\GGG T \EEE)):(\bar{e}_{{\rm el},\tau}(\GGG T \EEE)-\bar{e}_{\rm el}^{\tau}(\GGG T \EEE))\, \d x
\\
&\qquad+\int_0^{\GGG T \EEE}\!\!\int_{\Omega}\D({\underline{\alpha}}_{\tau})(\DT{e}_{{\rm el},\tau}-\DT{e}_{\rm el}^{\tau}):(\DT{e}_{{\rm el},\tau}-\DT{e}_{\rm el}^{\tau})\,\d x\,\d s\\
&\quad\leq \tau\sum_{k=1}^{\GGG T/\tau \EEE} \mathcal{R}({\alpha_\tau^{k-1}},\delta \pi(k\tau))-D_{\mathcal{R}}(\alpha_{\tau};\pi_{\tau};0,\GGG T \EEE)+\int_0^{\GGG T \EEE}\!\!\int_{\Omega}(\bar{f}_{\tau}{-\DDT{\widetilde{u}}_{\tau}})\cdot (\DT{u}_{\tau}-{\DT u}^{\tau})\,\d x\,\d s\\
&\qquad-\int_0^{\GGG T \EEE}\!\!\int_{\Omega}\D({\underline{\alpha}}_{\tau})\DT{e}_{\rm
  el}^{\tau}:(\DT{e}_{{\rm el},\tau}-\DT{e}_{\rm el}^{\tau})\,\d x\,\d
  s-\int_0^{\GGG T \EEE}\!\!\int_{\Omega}\C({\UUU \overline{\alpha}}^{\tau}\EEE)\bar{e}_{\rm el}^{\tau}:(\DT{e}_{{\rm el},\tau}-\DT{e}_{\rm el}^{\tau})\,\d x\,\d s\\
&\qquad+\int_0^{\GGG T \EEE}\!\!\int_{\Omega}\big(\C({\UUU \overline{\alpha}}^{\tau}\EEE)-\C({\underline{\alpha}}_{\tau}){\big)}\bar{e}^{\tau}_{\rm el}:(\DT{e}_{{\rm el},\tau}-\DT{e}_{\rm el}^{\tau})\big)\,\d x\,\d s.
\end{align*}
%for every $t\in ((k{-}1)\tau,k\tau)$.  
By Proposition \ref{prop:compactness} we infer that
\begin{align*}
&\limsup_{\tau\to 0} \int_0^{\GGG T\EEE}\!\!\int_{\Omega}\D({\UUU
  \overline{\alpha}}_{\tau} \EEE)(\DT{e}_{{\rm el},\tau}-\DT{e}_{\rm el}):(\DT{e}_{{\rm el},\tau}-\DT{e}_{\rm el})\,\d x\,\d s\\
&\quad+\limsup_{\tau\to 0}\bigg\{\int_0^{\GGG T \EEE}\mathcal{R}(\UUU \underline
  {\alpha}_{\tau} \EEE,\DT{\pi}^{\tau})\,\d s-D_{\mathcal{R}}(\alpha_{\tau};\pi_{\tau};0,\GGG T \EEE)+\int_0^{\GGG T \EEE}\!\!\int_{\Omega}(\bar{f}_{\tau}{-\DDT{\widetilde{u}}_{\tau}})\cdot (\DT{u}_{\tau}-{\DT u}^{\tau})\,\d x\,\d s\\
&\qquad\qquad-\int_0^{\GGG T \EEE}\!\!\int_{\Omega}\D({\underline{\alpha}}_{\tau})\DT{e}_{\rm el}^{\tau}:(\DT{e}_{{\rm el},\tau}-\DT{e}_{\rm el}^{\tau})\,\d x\,\d s-\int_0^{\GGG T \EEE}\!\!\int_{\Omega}\C({\underline{\alpha}}^{\tau})\bar{e}_{\rm el}^{\tau}:(\DT{e}_{{\rm el},\tau}-\DT{e}_{\rm el}^{\tau})\,\d x\,\d s\\
&\qquad\qquad+\int_0^{\GGG T \EEE}\!\!\int_{\Omega}\big(\C({\UUU \bar{\alpha}}^{\tau}\EEE)-\C({\underline{\alpha}}_{\tau}){\big)}\bar{e}^{\tau}_{\rm el}:(\DT{e}_{{\rm el},\tau}-\DT{e}_{\rm el}^{\tau})\big)\,\d x\,\d s\bigg\}.
\end{align*}
Since $u\in H^2(0,T;L^2(\Omega;\R^d))$ and $e_{\rm el}\in W^{1,\infty}(0,T;L^2(\Omega;\mthree_{\rm sym}))$, it follows that
\begin{equation}
\label{eq:num1-strong}
u^{\tau}\to u\quad\text{strongly in }\,L^2((0,T)\times \Omega;\R^d),
\end{equation}
and
\begin{equation}
\label{eq:num2-strong}
\bar{e}_{\rm el}^{\tau}\to \bar{e}_{\rm el}\quad\text{strongly in }\,L^2((0,T)\times \Omega;\mthree_{\rm sym}).
\end{equation}
Additionally, by the definition of the affine interpolants, 
\begin{align}
&\label{eq:num2bis-strong}
\DT{e}_{\rm el}^{\tau}\to \DT{e}_{\rm el}\quad\text{strongly in }\,L^2((0,T)\times \Omega;\mthree_{\rm sym}),\\
&\label{eq:num2ter-strong}
\DT{\pi}^{\tau}\to \DT{\pi}\quad\text{strongly in }\,L^1(0,T;\mb).
\end{align}
By \eqref{eq:comp1} and by \UUU the \EEE Aubin-Lions Lemma, %\COMMENT{OR Arcel\'a-Ascoli ???} 
up to the extraction of a (non-relabeled) subsequence,
\begin{equation}
\label{eq:num3-strong}
\alpha_{\tau}\to \alpha\quad\text{strongly in }%\,C([0,T];L^{\infty}(\Omega))
{C([0,T]\times\bar\Omega)}.
\end{equation}
Since $\alpha\in H^1(0,T;L^2(\Omega))\cap L^{\infty}(0,T;W^{1,p}(\Omega))$, 
\begin{equation}
\label{eq:num4-strong}
\bar{\alpha}^{\tau}, \, \UUU \underline{\alpha}^\tau \EEE \to \alpha\quad\text{strongly in }\,L^2((0,T)\times \Omega).
\end{equation}
Thus, by the Dominated Convergence Theorem, %\COMMENT{SIMPLER:
                                %continuity of Nemytski\u\i\ mappings
                                %(?)} 
we deduce that
\begin{align}
&\label{eq:num5-strong} \C({\UUU\bar{\alpha}}^{\tau}\EEE)\bar{e}^{\tau}_{\rm el}\to \C(\alpha)e_{\rm el}\quad\text{strongly in }\,L^2((0,T)\times \Omega;\mthree_{\rm sym}),\\
&\label{eq:num6-strong} \big(\C({\UUU \bar{\alpha}}^{\tau}\EEE)-\C({\underline{\alpha}}_{\tau})\big)\bar{e}^{\tau}_{\rm el}\to 0\quad\text{strongly in }\,L^2((0,T)\times \Omega;\mthree_{\rm sym}),\\
&\label{eq:num7-strong} \D({\underline{\alpha}}_{\tau})\DT{e}_{\rm el}^{\tau}\to \D(\alpha)\DT{e}_{\rm el}\quad\text{strongly in }\,L^2((0,T)\times \Omega;\mthree_{\rm sym}).
\end{align}
Finally, by the assumptions on $f$, we have
\begin{equation}
\label{eq:num8-strong}
\bar{f}_{\tau}\to f\quad\text{strongly in }\,L^2((0,T)\times \Omega;\R^d).
\end{equation}
By combining \eqref{eq:num1-strong}--\eqref{eq:num8-strong} we conclude that 
\begin{align}
&\nonumber\limsup_{\tau\to 0} \int_0^{\GGG T \EEE}\!\!\int_{\Omega}\D({\underline{\alpha}}_{\tau})(\DT{e}_{{\rm el},\tau}-\DT{e}_{\rm el}):(\DT{e}_{{\rm el},\tau}-\DT{e}_{\rm el})\,\d x\,\d s\\
&\quad\label{eq:a-f-limsup}\leq \limsup_{\tau\to 0}\int_0^{\GGG T \EEE}\mathcal{R}(\bar{\alpha}_{\tau},\DT{\pi}^{\tau})\,\d s-\liminf_{\tau\to 0}D_{\mathcal{R}}(\alpha_{\tau};\pi_{\tau};0,\GGG T \EEE).
\end{align}
Arguing as in \cite[Theorem 7.1]{dalmaso.desimone.mora06}, since $\pi\in W^{1,\infty}(0,T;\mb)$ we deduce the uniform bound
\begin{equation}
\label{eq:num9-strong}
\int_0^{\UUU T}\|\DT{\pi}^{\tau}\|_{\mb}=\tau\sum_{k=1}^{\UUU
  T/\tau}\|\delta \pi(k\tau)\|_{\mb}\leq \int_0^{\UUU T }\!\!\|\DT{\pi}\|_{\mb}\,\ds\leq C.
\end{equation}
%for every $\tau$. \COMMENT{WHAT IS $t$ HERE???} 
Hence, by \eqref{eq:num9-strong} and by the continuity and monotonicity of $\SYLD(\cdot)$ (see Subsection \ref{sub:diss}), there holds
\begin{align}
&\nonumber\limsup_{\tau\to 0}\int_0^{\GGG T \EEE} \mathcal{R}(\bar{\alpha}_{\tau},\DT{\pi}^{\tau})\,\d s\leq \limsup_{\tau\to 0}\int_0^{\GGG T \EEE} \mathcal{R}(\alpha_{\tau},\DT{\pi}^{\tau})\,\d s
\\&\ \nonumber
\leq \limsup_{\tau\to 0}\Big\{\int_0^{\GGG T \EEE} \mathcal{R}(\alpha,\DT{\pi}^{\tau})\,\d s+\Big|\int_0^{\GGG T \EEE}\!(\mathcal{R}(\alpha_{\tau},\DT{\pi}^{\tau})-\mathcal{R}(\alpha,\DT{\pi}^{\tau}))\,\d s\Big|\Big\}
\\&\label{eq:num10-strong}\ 
\leq \limsup_{\tau\to 0}\int_0^{\GGG T \EEE}\!\!\!\mathcal{R}(\alpha,\DT{\pi}^{\tau})\,\d s+C\limsup_{\tau\to 0}\|\SYLD(\alpha_{\tau}){-}\SYLD(\alpha)\|_{L^{\infty}((0,T)\times \Omega)}^{}=\int_0^{\GGG T \EEE}\!\!\!\mathcal{R}(\alpha,\DT{\pi})\,\d s,
\end{align}
where the last step follows by \eqref{eq:num2ter-strong}.

To complete the proof of \eqref{eq:strong-el}, it remains to show that
\begin{equation}
\label{eq:liminf-dissipation}
D_{\mathcal{R}}(\alpha;\pi;0,\GGG T\EEE)\leq \liminf_{\tau\to 0}D_{\mathcal{R}}(\alpha_{\tau};\pi_{\tau};0,\GGG T \EEE).
\end{equation}
Let $0<t_0<t_1<\dots<t_n\leq \GGG T \EEE$. By the definition of $D_{\mathcal{R}}$, we have
\begin{align*}
&D_{\mathcal{R}}(\alpha_{\tau};\pi_{\tau};0,\GGG T \EEE)\geq \sum_{j=1}^{\GGG T/\tau \EEE}\mathcal{R}(\alpha_{\tau}(t_j),\pi_{\tau}(t_j)-\pi_{\tau}(t_{j-1}))\geq \sum_{j=1}^{\GGG T/\tau \EEE}\mathcal{R}(\alpha(t_j),\pi_{\tau}(t_j)-\pi_{\tau}(t_{j-1}))\\[-.6em]
&\qquad\qquad\qquad\qquad\ \ 
%-T\COMMENT{????}
-{\tau}\sum_{j=1}^{\GGG T/\tau \EEE} \|\SYLD(\alpha_{\tau}(t_j))-\SYLD(\alpha(t_j))\|_{L^{\infty}(\Omega)}\|\DT{\pi}_{\tau}\|_{L^{\infty}(0,T;\mb)}.
\end{align*}
Now, by \eqref{eq:comp4} and \eqref{eq:num3-strong},
\begin{align*}
\lim_{\tau\to 0} 
%T\COMMENT{????}
{\tau}\sum_{j=1}^{\GGG T/\tau \EEE} \|\SYLD(\alpha_{\tau}(t_j))-\SYLD(\alpha(t_j))\|_{L^{\infty}(\Omega)}\|\DT{\pi}_{\tau}\|_{L^{\infty}(0,T;\mb)}=0.
\end{align*}
Thus, by \eqref{eq:comp4},
\begin{align*}
\!\liminf_{\tau\to 0}D_{\mathcal{R}}(\alpha_{\tau};\pi_{\tau};0,\GGG T \EEE)
\geq \liminf_{\tau\to 0}\sum_{j=1}^{\GGG T/\tau \EEE}
\mathcal{R}(\alpha(t_j),\pi_{\tau}(t_j){-}\pi_{\tau}(t_{j-1}))
\geq \sum_{j=1}^{\GGG T/\tau \EEE}\mathcal{R}(\alpha(t_j),\pi(t_j){-}\pi(t_{j-1})).
\end{align*}
By the arbitrariness of the partition, we deduce \eqref{eq:liminf-dissipation}, which in turn yields \eqref{eq:strong-el}.

Property \eqref{eq:strong-el2} follows arguing exactly as in the proof of 
\eqref{eq:aux5}.
\end{proof}

%\subsection{Proof of Theorem \ref{thm:main}}
%This subsection is entirely devoted to the proof of our main result.

{Let us now \UUU conclude \EEE the proof of Theorem \ref{thm:main}.}

\begin{proof}[Proof of Theorem \ref{thm:main}]
Let $(u,e_{\rm el},\pi,\alpha)$ be the limit quadruple identified in 
Proposition \ref{prop:compactness}. 
By Proposition~\ref{prop:compactness} we already know that condition (C1) in 
Definition \ref{def:weak} is fulfilled. For convenience of the reader we subdivide 
the proof of the remaining conditions into three steps.
\\
\noindent \textbf{Step 1}: We first show that the limit quadruple satisfies the equilibrium equation \eqref{system-u-new}.
In view of \eqref{eq:form-disc-equilibrium} we have
$$
\rho\DDT{\widetilde{u}}_{\tau}
-{\rm div}\,(\C(\underline{\alpha}_{\tau})\bar{e}_{{\rm el},\tau}
+\D(\underline{\alpha}_{\tau})\DT{e}_{{\rm el},\tau})=\bar{f}_{\tau}
$$
for a.e. $x\in \Omega$ and $t\in [0,T]$, and for all $\tau>0$. In particular, for all 
$\varphi\in C^{\infty}_c(0,T;C^{\infty}_c(\Omega))$ there holds
\begin{align*}
&\int_0^T\!\!\int_{\Omega}\rho\DDT{\widetilde{u}}_{\tau}\cdot\varphi\,
%\dx\,\d t
+
%\int_0^T\!\!\int_{\Omega}
(\C(\underline{\alpha}_{\tau})\bar{e}_{{\rm el},\tau}
+\D(\underline{\alpha}_{\tau})\DT{e}_{{\rm el},\tau}):{e(\varphi)}\,\d x\,\d t
%\\&\quad
=\int_0^T\!\!\int_{\Omega}\bar{f}_{\tau}\cdot \varphi\,\dx\,\d t.
\end{align*}
By 
%\eqref{eq:comp2}, \eqref{eq:comp3bis}, \eqref{eq:comp6}, \eqref{eq:strong-el},
(\ref{eq:comp}e-g)
 and \eqref{eq:num8-strong}, we infer that
\begin{align*}
\int_0^T\!\!\int_{\Omega}\rho\DDT{{u}}\cdot\varphi
%\,\dx\,\d s
+
%\int_0^T\!\!\int_{\Omega}
(\C(\alpha) e_{{\rm el}}+\D(\alpha)\DT{e}_{{\rm el}}):{e(\varphi)}\,\d x\,\dt
=\int_0^T\!\!\int_{\Omega}f\cdot \varphi\,\dx\,\dt,
\end{align*}
which in turn yields \eqref{system-u-new} for a.e. $x\in \Omega$, and $t\in [0,T]$. In particular, \eqref{eq:comp6} guarantees that $u(0)=u^0$, and $\DT{u}(0)=v_0$.\\

\noindent\textbf{Step 2}: The limit energy inequality is a direct 
consequence of \eqref{eq:together1bis}, Propositions \ref{prop:compactness} 
and \ref{prop:strong-convergence}, \GGG and \eqref{eq:liminf-dissipation}. \EEE
%. The only delicate point is to show that
%\begin{equation}
%\label{eq:liminf-dissipation2}
%D_{\mathcal{R}}(\alpha;\pi;0,t)\leq \liminf_{\tau\to 0}D_{\mathcal{R}}(\alpha_{\tau};\pi_{\tau};0,t-\tau).
%\end{equation}
%To this aim, fix $\lambda>0$, and let $0<t_0<t_1<\dots<t_n\leq t-\lambda$. 
%Assume that $\tau$ is small enough, so that $[0,t-\lambda]\subset [0,t-\tau]$. 
%Arguing exactly as in the proof of \eqref{eq:liminf-dissipation}, we deduce 
%$$
%\liminf_{\tau\to 0}D_{\mathcal{R}}(\alpha_{\tau};\pi_{\tau};0,t-\tau)\geq D_{\mathcal{R}}(\alpha;\pi;0,t-\lambda)
%$$
%for every $\lambda>0$. Property \eqref{eq:liminf-dissipation2} follows then by the 
%arbitrariness of $\lambda$.\\

\noindent\textbf{Step 3}: We now pass to the limit in the discrete damage law. 
In view of \eqref{eq:form-disc-damage}, for every $k\in\{1,\dots,{T}/{\tau}\}$ 
we deduce the inequality
\begin{align*}
\int_{\Omega}\Big(\UUU \phi^\circ(\alpha_{\tau}^k,
  \alpha_{\tau}^{k-1})+{\rm div}\, (|\nabla\alpha_{\tau}^k|^{p-2}\nabla\alpha_{\tau}^k)\GGG
-\frac12\C^{\circ}(\alpha^{k}_\tau,\alpha^{k-1}_\tau)\EEE e_{{\rm el},\tau}^k:e_{{\rm el},\tau}^k-\eta \delta \alpha_{\tau}^k\Big)
(\varphi-\delta \alpha_{\tau}^k)\,\d x\UUU = \EEE 0
\end{align*}
for all $\varphi\in W^{1,p}(\Omega)$ such that $\varphi(x)\le0$ for a.e.\ 
$x\in\Omega$. Thus, summing in $k$, we conclude that %for all $t\in[0,T]$, %\COMMENT{???}
%there holds
\begin{align*}
&\int_0^{\GGG T \EEE}\!\!\int_{\Omega} \Big(
%\phi'(\bar{\alpha}_{\tau})
{\phi^\circ(\bar\alpha_\tau,\underline\alpha_\tau)}\varphi
-|\nabla\bar{\alpha}_{\tau}|^{p-2}\nabla \bar{\alpha}_{\tau}\cdot \nabla \varphi
\GGG-\frac12 \EEE\C^\circ(\bar\alpha_\tau,\underline\alpha_\tau)\bar{e}_{{\rm el},\tau}:\bar{e}_{{\rm el},\tau}\varphi
-\eta \DT{\alpha}_{\tau}\varphi\Big)\,\dx\,\d t\\
&\quad\leq \int_{\Omega}\Big(\UUU\phi\EEE(\alpha_{\tau}(\GGG T \EEE))
%\,\d x
-
%\int_{\Omega}
\UUU \phi(\alpha_0)\EEE
%\,\d x
-
%\int_{\Omega}
\frac{\UUU \kappa}{p}|\nabla \UUU \bar\alpha_{\tau} \EEE(\GGG T \EEE)|^p
%\,\d x
+
%\int_{\Omega}
\frac{\UUU \kappa}{p}|\nabla \alpha_0|^p\Big)\,\d x\\[-.3em]
&\qquad\ 
\GGG-\EEE\int_0^{\GGG T \EEE}\!\!\int_{\Omega}\GGG\frac12\EEE\Big(\C^\circ(\bar\alpha_\tau,\underline\alpha_\tau)\bar{e}_{{\rm el},\tau}:\bar{e}_{{\rm el},\tau}\Big)\DT{\alpha}_{\tau}\,\d x\,\d t-\int_0^{\GGG T \EEE}\!\!\int_{\Omega}\eta\DT{\alpha}_{\tau}^2\,\d x\,\d t\,.
\end{align*}
%where $\C^\circ(\bar\alpha_\tau,\underline\alpha_\tau)$ denotes the forward-piecewise constant interpolant associated to the maps $\{\C^\circ(\alpha_{\tau}^k,\alpha_{\tau}^{k-1})\}_{k=1,\dots,T/\tau}$ (see \eqref{eq:f-int}). 
Condition (C4) in Definition \ref{def:weak} follows then in view of Propositions \ref{prop:compactness} and \ref{prop:strong-convergence}.
\end{proof}

\bigskip

\noindent
{\it Acknowledgments}: 
%We acknowledge
This research received a partial support from 
from the Vienna Science and Technology Fund (WWTF)
project MA14-009, and from the Austrian Science Fund (FWF)
projects F\,65 and P\,27052, from the Czech Sci.\ Foundation (CSF)
projects 17-04301S and 18-03834S, from the Austrian-Czech project 
I\,2375/16-34894L (FWF/CSF), as well as
from the institutional support RVO:61388998 (\v CR).
%\bibliography{ed-tr-us-poroplast_31May}
\bibliographystyle{plain}
%\bibliographystyle{zamm-title}
%\bibliography{ed-tr-us-poroplast}

\begin{thebibliography}{[10]}

\bibitem{AlessiMarigoVidoli}% article
 \textsc{R.~Alessi},  \textsc{J.\,J. Marigo},  and  \textsc{S.~Vidoli},
Gradient damage models coupled with plasticity and nucleation of cohesive
  cracks,
 \jr{Arch. Ration. Mech. Anal.} \textbf{214}(2), 575--615 (2014).


\othercit
\bibitem{ambrosio.fusco.pallara}% book
 \textsc{L.~Ambrosio},  \textsc{N.~Fusco},  and  \textsc{D.~Pallara},
Functions of bounded variation and free discontinuity problems (The Clarendon
  Press, Oxford Univ. Press, New York, 2000).


\bibitem{AmbTor90AFDJ}% article
 \textsc{L.~Ambrosio} and  \textsc{V.\,M. Tortorelli},
Approximation of functional depending on jumps via by elliptic functionals via
  {$\Gamma$}-convergence,
 \jr{Comm. Pure Appl. Math.} \textbf{43}, 999--1036 (1990).


\bibitem{AmbTor90AFDP}% article
 \textsc{L.~Ambrosio} and  \textsc{V.\,M. Tortorelli},
On the approximation of free discontinuity problems,
 \jr{Boll. Unione Mat. Ital.} \textbf{7}, 105--123 (1992).


\bibitem{anzellotti.luckhaus}% article
 \textsc{G.~Anzellotti} and  \textsc{S.~Luckhaus},
Dynamical evolution of elasto-perfectly plastic bodies,
 \jr{Appl. Math. Optim.} \textbf{15}, 121--140 (1987).


\bibitem{babadjian.mora}% article
 \textsc{J.\,F. Babadjian} and  \textsc{M.\,G. Mora},
Approximation of dynamic and quasi-static evolution problems in
  elasto-plasticity by cap models,
 \jr{Quart. Appl. Math.} \textbf{73}, 265--316 (2015).


\bibitem{BabadjianMifsud}% article
 \textsc{J.~Babadjian} and  \textsc{C.~Mifsud},
Hyperbolic structure for a simplified model of dynamical perfect plasticity,
 \jr{Arch. Ration. Mech. Anal.} \textbf{223}, 761--815 (2017).


\bibitem{BarRou13NATC}% article
 \textsc{S.~Bartels} and  \textsc{T.~Roub\'\i\v{c}ek},
Numerical approaches to thermally coupled perfect plasticity,
 \jr{Numer. Meth. Partial Diff. Equations} \textbf{29}, 1837--1863 (2013).


\othercit
\bibitem{Bedf85HPCM}% book
 \textsc{A.~Bedford},
Hamilton's Principle in Continuum Mechanics (Pitman, Boston, 1985).


\othercit
\bibitem{BonettiRoccaRossiThomas}% incollection
 \textsc{E.~Bonetti},  \textsc{E.~Rocca},  \textsc{R.~Rossi},  and
  \textsc{M.~Thomas},
A rate-independent gradient system in damage coupled with plasticity via
  structured strains,
 in: Gradient flows: from theory to application, edited by B.~D\"uring, C.\,B.
  Sch\"onlieb,  and M.\,T. Wolfram, ESAIM Proc. Surveys Vol.\,54 (EDP Sci., Les
  Ulis, 2016),  pp.\,54--69.


\bibitem{BoFrMa08VAF}% article
 \textsc{B.~Bourdin},  \textsc{G.\,A. Francfort},  and  \textsc{J.\,J. Marigo},
The variational approach to fracture,
 \jr{J. Elasticity} \textbf{91}, 5--148 (2008).


\bibitem{BoLaRi11TDMD}% article
 \textsc{B.~Bourdin},  \textsc{C.\,J. Larsen},  and  \textsc{C.\,L.
  Richardson},
A time-discrete model for dynamic fracture based on crack regularization,
 \jr{Int. J. Fract.} \textbf{10}, 133--143 (2011).


\bibitem{BMMS14MPCC}% article
 \textsc{B.~Bourdin},  \textsc{J.\,J. Marigo},  \textsc{C.~Maurini},  and
  \textsc{P.~Sicsic},
Morphogenesis and propagation of complex cracks induced by thermal shocks,
 \jr{Phys. Rev. Lett.} \textbf{112}, 014301 (2014).


\bibitem{caponi2018}% article
 \textsc{M.~Caponi},
Existence of solutions to a phase-field model of dynamic fracture with a
  crack-dependent dissipation,
 \jr{Preprint SISSA 06/2018/MATE}.


\bibitem{chelminski}% article
 \textsc{K.~Che{\l}mi\'nski},
Coercive approximation of viscoplasticity and plasticity,
 \jr{Asympt. Anal.} \textbf{26}, 105--133 (2001).


\bibitem{Chel01PPZR}% article
 \textsc{K.~Che{\l}mi\'nski},
Perfect plasticity as a zero relaxation limit of plasticity with isotropic
  hardening,
 \jr{Math. Meth. Appl. Sci.} \textbf{24}, 117--136 (2001).


\bibitem{conti.focardi.iurlano2016}% article
 \textsc{S.~Conti},  \textsc{M.~Focardi},  and  \textsc{F.~Iurlano},
Phase field approximation of cohesive fracture models,
 \jr{Ann. Inst. H. Poincar\'e Anal. Non Lin\'eaire} \textbf{33}(4), 1033--1067
  (2016).


\bibitem{Crismale2015}% article
 \textsc{V.~Crismale},
Globally stable quasistatic evolution for a coupled elastoplastic-damage model,
 \jr{ESAIM Control Optim. Calc. Var.} \textbf{22}(3), 883--912 (2016).


\bibitem{Crismale2017}% article
 \textsc{V.~Crismale},
Globally stable quasistatic evolution for strain gradient plasticity coupled
  with damage,
 \jr{Ann. Mat. Pura Appl. (4)} \textbf{196}(2), 641--685 (2017).


\bibitem{CrismaleLazzaroni}% article
 \textsc{V.~Crismale} and  \textsc{G.~Lazzaroni},
Viscous approximation of quasistatic evolutions for a coupled
  elastoplastic-damage model,
 \jr{Calc. Var. Partial Differential Equations} \textbf{55}(1), Art. 17, 54
  (2016).


\bibitem{CrismaleOrlando}% article
 \textsc{V.~Crismale} and  \textsc{G.~Orlando},
A {R}eshetnyak-type lower semicontinuity result for linearised
  elasto-plasticity coupled with damage in {$W^{1,n}$},
 \jr{NoDEA Nonlinear Differential Equations Appl.} \textbf{25}(2), Art. 16, 20
  (2018).


\bibitem{dalmaso.desimone.mora06}% article
 \textsc{G.~{Dal Maso}},  \textsc{A.~DeSimone},  and  \textsc{M.\,G. Mora},
Quasistatic evolution problems for linearly elastic-perfectly plastic
  materials,
 \jr{Arch. Ration. Mech. Anal.} \textbf{180}, 237--291 (2006).


\bibitem{dalmaso.scala}% article
 \textsc{G.~{Dal Maso}} and  \textsc{R.~Scala},
Quasistatic evolution in perfect plasticity as limit of dynamic processes,
 \jr{J. Dynam. Differential Equations} \textbf{26}(4), 915--954 (2014).


\bibitem{DalmasoIurlano}% article
 \textsc{G.~Dal~Maso} and  \textsc{F.~Iurlano},
Fracture models as {$\Gamma$}-limits of damage models,
 \jr{Commun. Pure Appl. Anal.} \textbf{12}(4), 1657--1686 (2013).


\bibitem{DalmasoOrlandoToader}% article
 \textsc{G.~Dal~Maso},  \textsc{G.~Orlando},  and  \textsc{R.~Toader},
Fracture models for elasto-plastic materials as limits of gradient damage
  models coupled with plasticity: the antiplane case,
 \jr{Calc. Var. Partial Differential Equations} \textbf{55}(3), Art. 45, 39
  (2016).


\bibitem{DavSte??DPPC}% article
 \textsc{E.~Davoli} and  \textsc{U.~Stefanelli},
Dynamic perfect plasticity as convex minimization,
 \jr{{S}ubmitted.} (2018),
Preprint available at {\tt http://cvgmt.sns.it/paper/3229/}.


\bibitem{DuanDay}% article
 \textsc{B.~Duan} and  \textsc{S.~Day},
Inelastic strain distribution and seismic radiation from rupture of a fault
  kink,
 \jr{Journal of Geophysical Research: Solid Earth} \textbf{113}(B12) (2008).


\bibitem{Foca01VAFD}% article
 \textsc{M.~Focardi},
On the variational approximation of free-discontinuity problems in the
  vectorial case,
 \jr{Math. Models Methods Appl. Sci.} \textbf{11}, 663--684 (2001).


\bibitem{focardi.iurlano2014}% article
 \textsc{M.~Focardi} and  \textsc{F.~Iurlano},
Asymptotic analysis of {A}mbrosio-{T}ortorelli energies in linearized
  elasticity,
 \jr{SIAM J. Math. Anal.} \textbf{46}, 2936--2955 (2014).


\bibitem{francfort.giacomini}% article
 \textsc{G.~Francfort} and  \textsc{A.~Giacomini},
On periodic homogenization in perfect elasto-plasticity,
 \jr{J. Eur. Math. Soc. (JEMS)} \textbf{16}, 409--461 (2014).


\bibitem{freddi.iurlano2017}% article
 \textsc{F.~Freddi} and  \textsc{F.~Iurlano},
Numerical insight of a variational smeared approach to cohesive fracture,
 \jr{J. Mech. Phys. Solids} \textbf{98}, 156--171 (2017).


\bibitem{GatuingtPiajudier}% article
 \textsc{F.~Gatuingt} and  \textsc{G.~Pijaudierâ-Cabot},
Coupled damage and plasticity modelling in transient dynamic analysis of
  concrete,
 \jr{International Journal for Numerical and Analytical Methods in
  Geomechanics} \textbf{26}, 1--24 (2002).


\bibitem{Giac05ATAQ}% article
 \textsc{A.~Giacomini},
Ambrosio-{T}ortorelli approximation of quasi-static evolution of brittle
  fractures,
 \jr{Calc. Var. Partial Differential Equations} \textbf{22}, 129--172 (2005).


\bibitem{goffman.serrin}% article
 \textsc{C.~Goffman} and  \textsc{J.~Serrin},
Sublinear functions of measures and variational integrals,
 \jr{Duke Math. J.} \textbf{31}, 159--178 (1964).


\bibitem{iurlano2014}% article
 \textsc{F.~Iurlano},
A density result for {GSBD} and its application to the approximation of brittle
  fracture energies,
 \jr{Calc. Var. Partial Differential Equations} \textbf{51}, 315--342
  (2014).


\othercit
\bibitem{kisiel}% inproceedings
 \textsc{K.~Kisiel},
Dynamical model of viscoplasticity,
 in: Equadiff 2017, edited by D.~\v{S}ev\v{c}ovi\v{c} and J.~Urb\'{a}n
  (Spectrum, 2017),  pp.\,29--36.


\bibitem{kohn.temam}% article
 \textsc{R.~Kohn} and  \textsc{R.~Temam},
Dual spaces of stresses and strains, with applications to {H}encky plasticity,
 \jr{Appl. Math. Optim.} \textbf{10}, 1--35 (1983).


\bibitem{LaOrSu10ESRM}% article
 \textsc{C.\,J. Larsen},  \textsc{C.~Ortner},  and  \textsc{E.~S\"{u}li},
Existence of solution to a regularized model of dynamic fracture,
 \jr{Math. Models Meth. Appl. Sci.} \textbf{20}, 1021--1048 (2010).


\bibitem{LazzaroniRossiThomasToader}% article
 \textsc{G.~Lazzaroni},  \textsc{R.~Rossi},  \textsc{M.~Thomas},  and
  \textsc{R.~Toader},
Some remarks on a model for rate-independent damage in
  thermo-visco-elastodynamics,
 \jr{J. Phys. Conf. Ser.} \textbf{727}, 012009, 20 (2016).


\bibitem{MagMor16DEMP}% article
 \textsc{G.\,B. Maggiani} and  \textsc{M.\,G. Mora},
A dynamic evolution model for perfectly plastic plates,
 \jr{Math. Models Methods Appl. Sci.} \textbf{26}, 1825--1864 (2016).


\othercit
\bibitem{MieRou15RIST}% book
 \textsc{A.~Mielke} and  \textsc{T.~Roub{\'\i}{\v{c}}ek},
Rate-Independent Systems -- Theory and Application (Springer, New York, 2015).


\bibitem{mora}% article
 \textsc{M.\,G. Mora},
Relaxation of the {H}encky model in perfect plasticity,
 \jr{J. Math. Pures Appl.} \textbf{106}, 725--743 (2016).


\bibitem{Ross??VPPT}% article
 \textsc{R.~Rossi},
From visco to perfect plasticity in thermoviscoelastic materials,
 \jr{Preprint: arXiv:1609.07232}.


\bibitem{ThomasRossi}% article
 \textsc{R.~Rossi} and  \textsc{M.~Thomas},
Coupling rate-independent and rate-dependent processes: existence results,
 \jr{SIAM J. Math. Anal.} \textbf{49}, 1419--1494 (2017).


\bibitem{Roub17GMHF}% article
 \textsc{T.~Roub{\'\i}{\v{c}}ek},
Geophysical models of heat and fluid flow in damageable poro-elastic continua,
 \jr{Contin. Mech. Thermodyn.} \textbf{29}, 625--646 (2017).


\othercit
\bibitem{Roub??MDDP}% incollection
 \textsc{T.~Roub\'\i\v{c}ek},
Models of dynamic damage and phase-field fracture, and their various time
  discretisations,
 in: Topics in Applied Analysis and Optimisation, edited by J.\,F. Rodrigues
  and M.~Hinterm\"uller.~CIM Series in Math. Sci. (Springer, submitted).


\bibitem{tr-us-seismo}% article
 \textsc{T.~Roub\'\i\v{c}ek} and  \textsc{U.~Stefanelli},
Thermodynamics of elastoplastic porous rocks at large strains towards
  earthquake modeling,
 \jr{Submitted} (2018).


\bibitem{RoubValdman2017}% article
 \textsc{T.~Roub\'\i\v{c}ek} and  \textsc{J.~Valdman},
Perfect plasticity with damage and healing at small strains, its modeling,
  analysis, and computer implementation,
 \jr{SIAM J. Appl. Math.} \textbf{76}, 314--340 (2016).


\bibitem{RoubValdman2016}% article
 \textsc{T.~Roub\'\i\v{c}ek} and  \textsc{J.~Valdman},
Stress-driven solution to rate-independent elasto-plasticity with damage at
  small strains and its computer implementation,
 \jr{Math. Mech. Solids} \textbf{22}, 1267--1287 (2017).


\othercit
\bibitem{temam}% book
 \textsc{R.~Temam},
Probl\`emes math\'ematiques en plasticit\'e (Montrouge, Gauthier-Villars,
  1983).


\bibitem{MielkeThomas}% article
 \textsc{M.~Thomas} and  \textsc{A.~Mielke},
Damage of nonlinearly elastic materials at small strain: Existence and
  regularity results,
 \jr{Z. Angew. Math. Mech.} \textbf{90}, 88--112.


\bibitem{XuEtAl}% article
 \textsc{B.~Xu},  \textsc{D.~Zou},  \textsc{X.~Kong},  \textsc{Z.~Hu},  and
  \textsc{Y.~Zhou},
Dynamic damage evaluation on the slabs of the concrete faced rockfill dam with
  the plastic-damage model,
 \jr{Computers and Geotechnics} \textbf{65}, 258--265 (2015).


\bibitem{ZhangGross}% article
 \textsc{C.\,H. Zhang} and  \textsc{D.~Gross},
Dynamic behavior of damaged brittle solids,
 \jr{Z. Angew. Math. Mech.} \textbf{79}(S1), 131--134.


\end{thebibliography}

\providecommand{\WileyBibTextsc}{}
\let\textsc\WileyBibTextsc
\providecommand{\othercit}{}
\providecommand{\jr}[1]{#1}
\providecommand{\etal}{~et~al.}

\end{document}